\title{Unimodular Binary Hierarchical Models}
\author{Daniel Irving Bernstein}
\author{Seth Sullivant}
\address{Department of Mathematics \\ North Carolina State University, Raleigh, NC 27695}
\email{dibernst@ncsu.edu}
\email{smsulli2@ncsu.edu}
\theoremstyle{plain}
\newtheorem{thm}{Theorem}[section]
\newtheorem{lemma}[thm]{Lemma}
\newtheorem{prop}[thm]{Proposition}
\newtheorem{cor}[thm]{Corollary}
\newtheorem*{thm*}{Theorem}
\newtheorem*{lemma*}{Lemma}
\newtheorem*{prop*}{Proposition}
\newtheorem*{cor*}{Corollary}
\newtheorem*{conj*}{Conjecture}
\theoremstyle{definition}
\newtheorem{defn}[thm]{Definition}
\newtheorem*{defn*}{Definition}
\newtheorem{ex}[thm]{Example}
\newtheorem{ques}[thm]{Question}
\theoremstyle{remark}
\newcommand{\zz}{\mathbb{Z}}
\newcommand{\nn}{\mathbb{N}}
\newcommand{\rr}{\mathbb{R}}
\newcommand{\kk}{\mathbb{K}}
\newcommand{\bfd}{\mathbf{d}}
\newcommand{\bfi}{\mathbf{i}}
\newcommand{\bfj}{\mathbf{j}}
\newcommand{\bfk}{\mathbf{k}}
\newcommand{\calc}{\mathcal{C}}
\newcommand{\ind}{\mbox{$\perp \kern-5.5pt \perp$}}
\newcommand{\rank}{\textnormal{rank}}
\newcommand{\link}{\textnormal{link}}
\newcommand{\cone}{\textnormal{cone}}
\newcommand{\facet}{\textnormal{facet}}
\tikzstyle{vertex}=[circle, draw, inner sep=0pt, minimum size=6pt, fill=black]
\newcommand{\vertex}{\node[vertex]}
\begin{document}

\begin{abstract}
Associated to each simplicial complex is a binary hierarchical model.
We classify the simplicial complexes that yield unimodular
binary hierarchical models.  Our main theorem provides
both a construction of all unimodular binary hierarchical models, together
with a characterization in terms of excluded minors,
{where our definition of a minor allows the taking of links and induced complexes.}
A key tool in the
proof is the lemma that the class of unimodular binary hierarchical models is closed
under the Alexander duality operation on simplicial complexes.
\end{abstract}

\maketitle


\section{Introduction}\label{introduction}

Associated to a simplicial complex $\calc$ with ground set $[m]$
and an integer vector $\bfd \in \zz^m$ is an integral matrix
$\mathcal{A}_{\calc, \bfd}$.  The hierarchical model associated to
$\calc, \bfd$ is a log-linear model (i.e.~discrete exponential family) whose design matrix is the matrix $\mathcal{A}_{\calc, \bfd}$.
{ The Zariski closure of a hierarchical model is a (not necessarily normal) toric variety.}
As for all log-linear models, important relevant problems
are to study properties of the lattice $\ker_\zz \mathcal{A}_{\calc, \bfd}$,
the polyhedral cone 
$\rr_{\geq 0} \mathcal{A}_{\calc, \bfd} := \{\mathcal{A}_{\calc, \bfd}x : x \geq 0 \} $ 
and the semigroup
$\nn \mathcal{A}_{\calc, \bfd} :=  \{\mathcal{A}_{\calc, \bfd}x : x \geq 0, x \mbox{ integral } \} $.  For example, Markov bases of the lattice $\ker_\zz \mathcal{A}_{\calc, \bfd}$
can be used for goodness-of-fit tests of the log-linear model
associated to $\mathcal{A}_{\calc, \bfd}$ (see \cite{Diaconis1998}).
In the special case where $\bfd = {\bf 2}$ the vector of all twos,
 we call $\mathcal{A}_{\calc, {\bf 2}}$ a
binary hierarchical model.  

There are a number of ``niceness'' properties that an integral matrix $A \in \zz^{d \times n}$ 
could satisfy.  Perhaps the strongest is unimodularity.  
\begin{defn}\label{def:unimodular}
The following are equivalent definitions for an integral matrix $A$ to be \emph{unimodular}.
\begin{enumerate}
\item  The polyhedron $P_{A,b} = \{x \in \rr^s : Ax = b, x \ge 0\}$
 has all integral vertices for every $b \in \nn A$.
\item  Suppose $\rank A = d$.  Then there exists a $\lambda$ such that
 the determinants of all $d \times d$ submatrices of $A$ are $0$ or $\pm \lambda$.
\item  The entries in each circuit of $A$ are either $0$ or $\pm 1$.
\item  The Graver basis of $A$ consists of elements whose entries
are either $0$ or $\pm 1$.
\end{enumerate}
\end{defn}
We elaborate on this definition in Section \ref{sec:preliminaries}.
A weaker property for the matrix $A$ to satisfy is normality.
\begin{defn}
A matrix $A$ is \emph{normal} if the semigroup $\nn A$ is saturated, that is
$\nn A  =  \zz A  \cap  \rr_{\geq 0} A$.
\end{defn}

In fact, every unimodular matrix $A$ is normal.  In previous work
of Rauh and the second author \cite{Rauh2014}, normality was identified as a key
property of hierarchical models to be able to apply the toric fiber product  
construction to calculate a Markov basis.  {If $A_{\calc, \bfd}$
is unimodular, it is also easy to solve the integer programs that arise
in sequential importance sampling \cite{Chen2006}.}
  It is a major open problem to classify the 
normal hierarchical models.  The special case where the underlying simplicial complex
$\calc$ is a graph, and where $\bfd = {\bf 2}$ was
handled in \cite{Sullivant2010} where it was shown that a binary graph
model is normal if and only if the graph is free of $K_4$ minors.

Our goal in the present paper is to classify the unimodular binary
hierarchical models.  In fact, this result is also an important first
step in the classification of normal binary hierarchical models because
a matrix $\Lambda(A)$ of Lawrence-type is normal
if and only if unimodular (this follows from \cite{indispensable} Corollary 16 in light of the circuit definition of unimodularity).
So this provides a characterization of the normal binary hierarchical models that have a big facet
(see Definition \ref{def:lawrence} and remarks that follow).

Our main result is a complete structural characterization of the 
unimodular binary hierarchical models.  On the one hand, we show that
every unimodular binary hierarchical model can be built up
from three basic families, together with three operations.
As part of that proof we also give an excluded minor characterization
of the unimodular binary hierarchical models.  Here our notion of minor
of a simplicial complex is one that is obtained by taking a sequence of
vertex deletions or links of vertices.
{We caution the reader that our notion of simplicial complex minor
does not directly generalize the notion of a graph minor, but it does generalize the notion of a matroid minor.}
Our excluded minor characterization
involves an infinite number of minimally nonunimodular complexes.

Our characterization of unimodular binary hierarchical models can be
seen in analogy with the classic characterizations of 
regular matroids/totally unimodular matrices, which have both
excluded minor characterizations and constructive characterizations.
{While there is clearly a close connection between our problem and the
study of totally unimodular matrices, our methods do not make use of
Seymour's \cite{Seymour1980} nor Tutte's \cite{tutte1958} classification of regular matroids.
There are two reasons for this.
The first is that the set of matroids underlying the design matrices of hierarchical models
is not closed under the operations of direct sum, two sum, three sum, deletion, nor contraction.
The second is that normality, unlike (total) unimodularity, is not a matroidal property
and we are interested in extending our methods to the normal classification problem \cite{bernstein-sullivant2015}.
}

The outline of this paper is as follows.  In the next section,
we provide a detailed introduction to hierarchical models and the construction
and interpretation of the matrix $\mathcal{A}_{\calc, \bfd}$.  
Section \ref{sec:basic} details basic examples of unimodular complexes
and constructions of new unimodular complexes from old ones.
A main result here is Proposition \ref{dual_uni} which shows that Alexander duality
preserves unimodularity.
Section \ref{sec:bavoid} gives the list of minimal non-unimodular binary hierarchical
models.  There are one infinite family and six sporadic minimal non-unimodular complexes.
The section also provides a number of structural results on simplicial complexes
that avoid this list of simplicial complexes.  Section \ref{sec:1skel} provides
the combinatorial description of the $1$-skeleton of a simplicial
complex that supports a unimodular binary hierarchical model.  That result
is used as a stepping stone in a complex induction in Section \ref{sec:main}
which gives the proof of the main theorem, Theorem \ref{main}.
In Section \ref{sec:non} we explain how our characterization in the binary case
can be used to make headway in the non-binary case.


\section{Preliminaries on Hierarchical Models
and Unimodularity}\label{sec:preliminaries}

In this section we explain how to construct the matrix $\mathcal{A}_{\calc, \bfd}$ associated to a hierarchical model.
We then define some terms from Definition \ref{def:unimodular} of a unimodular matrix  and prove the equivalence between the four parts.

\begin{defn}
	Let ${\bf d} = (d_1,\dots,d_n) \in \zz^n$ be such that $d_i \ge 2$ for each $i$.
	Then we define $\rr^{\bf d}$ to be the vector space of all $d_1\times \dots \times d_n$-way tables.
	That is, each ${\bf u} \in \rr^{\bf d}$ is a table in $n$ dimensions where the $i$th dimension has $d_i$ levels.
	We denote entries in a table ${\bf u}$ by $u_{i_1,\dots,i_n}$ where $(i_1,\dots,i_n) \in [d_1]\times\dots\times [d_n]$.
	We will use the shorthand ${\bf i}$ to denote ${i_1,\dots,i_n}$.
\end{defn}

\begin{defn}
	Let $\mathcal{C}$ be a simplicial complex on ground set $[n]$.
	A \emph{facet} of $\mathcal{C}$ is an inclusion-maximal subset $F \subseteq [n]$
	that is contained in $\mathcal{C}$.
	We let $\facet(C)$ denote the collection of facets of $\mathcal{C}$.
\end{defn}

\begin{defn}
	Let ${\bf d} = (d_1,\dots,d_n) \in \zz^n$ and $F = \{f_1,\dots,f_k\} \subseteq [n]$.
	We define ${\bf d}_F$ to be the restriction of ${\bf d}$ to the indices in $F$,
	i.e. ${\bf d}_F = (d_{f_1},\dots,d_{f_k})$.
\end{defn}

\begin{defn}
	Let $\mathcal{C}$ be a simplicial complex on ground set $[n]$
	and let ${\bf d} = (d_1,\dots,d_n) \in \zz^n$ be such that $d_i \ge 2$ for each $i$.
	Then we have the linear map
	\[
		\pi_{\calc,{\bf d}}: \rr^{\bf d} \rightarrow \bigoplus_{F \in \facet(C)} \rr^{{\bf d}_F}
	\]
	defined by
	\[
		(u_{i_1, \ldots, i_n}  :  i_j \in [d_j])  \mapsto  \bigoplus_{\substack{{F \in {\rm facet}(\calc)}\\F=\{f_1,\dots,f_k\}}}
		\left( \sum_{\substack{{\bf i} \in [d_1]\times\dots\times[d_n]:
				\\{\bf i}_F = {\bf j}}}  u_{i_1, \ldots, i_n} : {\bf j} \in [d_{f_1}]\times\dots\times[d_{f_k}]  \right).
	\]

	The matrix $\mathcal{A}_{\calc,{\bf d}}$ denotes the matrix in the standard basis
	that represents the linear transformation $\pi_{\mathcal{C},{\bf d}}$.
\end{defn}

We give an example of this construction.

\begin{ex}		
Let $\calc$ be the simplicial complex on ground set $[3] = \{1,2,3\}$ with
${\rm facet}(\calc) =  \{ \{1\}, \{2,3\} \}$.
The linear transformation $\pi_{\calc, \bfd}$
maps a three-way tensor $u = (u_{ijk} : i \in [d_1], j \in [d_2], k \in [d_3] ) \in \rr^D$
to the direct sum of a one-way tensor and a two-way tensor:
$$
\pi_{\calc, \bfd}( u)  =    ( \sum_{j,k}  u_{ijk} :  i \in [d_1])  \oplus 
( \sum_{i}  u_{ijk} :  j \in [d_2], k \in [d_3]). 
$$
Taking $\bfd = {\bf 2}$, the matrix $\mathcal{A}_{\calc, {\bf 2}}$ could be represented
as follows:
$$
\begin{pmatrix}
1 & 1 & 1 & 1 & 0 & 0 & 0 & 0 \\
0 & 0 & 0 & 0 & 1 & 1 & 1 & 1 \\
\hline
1 & 0 & 0 & 0 & 1 & 0 & 0 & 0 \\
0 & 1 & 0 & 0 & 0 & 1 & 0 & 0  \\
0 & 0 & 1 & 0 & 0 & 0 & 1 & 0 \\
0 & 0 & 0 & 1 & 0 & 0 & 0 & 1  \\
\end{pmatrix},
$$
the horizontal line dividing the image space into the parts corresponding
to the two facets of $\calc$.
\end{ex}

It can be useful to be precise about how to write down the matrix
$\mathcal{A}_{\calc, \bfd}$.  The columns of the matrix
$\mathcal{A}_{\calc, \bfd}$ are indexed by elements of the set
$\prod_{j \in [n]}  [d_j]$ and the rows are indexed by pairs
$(F, e)$ where $F \in {\rm facet}(\calc)$ and $e \in \prod_{j \in F}  [d_j]$.
The entry in position with row index $(F, e)$ and column indexed by $i \in \prod_{j \in [n]}  [d_j]$ is $1$ if $e  =  (i_j :  j \in F)$, otherwise it is equal to
$0$.

The binary case studied in this paper concerns the situation where
all table dimensions are of size $2$, that is $\bfd = {\bf 2} = (2,2,\dots,2)$; the vector of all twos.
In this case, we will usually abbreviate notation as stated in the following definition.

\begin{defn}
	Let $\mathcal{C}$ be a simplicial complex on $[n]$.
	We define $\mathcal{A}_{\calc} := \mathcal{A}_{\calc,{\bf 2}}$.
	We say that $\mathcal{C}$ is \emph{unimodular} if $\mathcal{A}_\calc$ is unimodular as a matrix.
\end{defn}

We now elaborate on Definition \ref{def:unimodular}.
To start, we give two definitions.
\begin{defn}\label{def:circuit}
	Let $A$ be an integer matrix.
	A nonzero element $u \in \ker_\zz A$
	is called a \emph{circuit} if its nonzero entries are relatively prime
	and there is no other nonzero element $v \in \ker_\zz A$ such that ${\rm supp}(v) \subset {\rm supp}(u)$.
 	Here, ${\rm supp}(v)$ denotes the set of  positions of nonzero elements of $v$.
\end{defn}

\begin{defn}\label{defn:graver}
	Let $A$ be an integer matrix.
	A nonzero element $u \in \ker_\zz A$ is called \emph{primitive} if there is no nonzero
	$v \in \ker_\zz A$, $v \neq u$ such that $u_i v_i \geq 0$ for all $i$, and $|v_i| \leq |u_i|$ for all $i$.
	The set of all primitive vectors in $\ker_\zz A$ is called the \emph{Graver basis} of $A$.
\end{defn}

Now we explain the equivalence between the four items in Definition \ref{def:unimodular}.
See \cite{Schrijver1986}, Theorem 19.2 for the equivalence of (1) and (2).
We can see that condition (3) of Definition \ref{def:unimodular} is equivalent to
condition (2) because the circuits can be computed via a determinantal formula
using Cramer's rule \cite[p.~35]{sturmfels}.
Given the equivalence of (2) and (3),
(3) and (4) are equivalent by \cite{sturmfels}, Propositions 4.11 and 8.11.

Questions about unimodularity of a matrix $A$ are invariant under certain changes to the matrix.
The following proposition gives three that we make tacit use of throughout the rest of the paper.
 
 \begin{prop}\label{obvious}
 	Let $A$ be a unimodular integer matrix.
 	Then the following are also unimodular matrices.
	\begin{enumerate}
		\item\label{item:reorder} A matrix obtained by reordering the columns of $A$
		\item\label{item:negative} A matrix obtained by multiplying a column of $A$ by $-1$
		\item\label{item:rowspace} Any matrix that has the same rowspace as $A$
		\item\label{item:subset} Any matrix formed by a subset of columns of $A$.
	\end{enumerate}
 \end{prop}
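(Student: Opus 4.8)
The plan is to deduce all four statements from the circuit characterization of unimodularity, part (3) of Definition~\ref{def:unimodular}: a matrix $A$ is unimodular if and only if every circuit of $A$ has all of its entries in $\{0,\pm 1\}$. Since a circuit is, by definition, a support-minimal nonzero element of $\ker_\zz A$ with relatively prime entries, the set of circuits of $A$ --- and hence whether $A$ is unimodular --- depends only on the integer kernel $\ker_\zz A$ together with the labelling of coordinates. So for each operation I would describe precisely how $\ker_\zz$ changes, and observe that the multiset of nonzero entries occurring in circuits is unaffected.

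For (\ref{item:reorder}), permuting the columns of $A$ replaces $\ker_\zz A$ by its image under the corresponding permutation of coordinates; this map is a lattice automorphism of $\zz^n$ that carries support-minimal vectors to support-minimal vectors and preserves each vector's entries, so it sends circuits to circuits with the same entries. For (\ref{item:negative}), multiplying the $j$th column of $A$ by $-1$ is the same as replacing $A$ by $AD$, where $D$ is the diagonal matrix agreeing with the identity except for a $-1$ in position $(j,j)$; since $D=D^{-1}$ we have $\ker_\zz(AD)=D\cdot\ker_\zz A$, and $D$ merely negates the $j$th coordinate, so again circuits map to circuits with the same multiset of entries. For (\ref{item:rowspace}), if $B$ is an integer matrix whose rows span the same $\qq$-subspace of $\qq^n$ as the rows of $A$, then the orthogonal complements of these subspaces agree, i.e.~$\ker_\qq B=\ker_\qq A$; intersecting with $\zz^n$ gives $\ker_\zz B=\ker_\zz A$, so $A$ and $B$ have exactly the same circuits.

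For (\ref{item:subset}), let $B$ be the submatrix of $A$ consisting of the columns indexed by a subset $S$. I would show that every circuit of $B$, extended by zeros outside $S$, is a circuit of $A$. Indeed, given a circuit $u$ of $B$, let $\tilde u\in\zz^n$ be its extension by zeros; then $\tilde u\in\ker_\zz A$ and its nonzero entries are relatively prime. If some nonzero $v\in\ker_\zz A$ satisfied $\supp(v)\subsetneq\supp(\tilde u)$, then $\supp(v)\subseteq S$, so the restriction of $v$ to the coordinates in $S$ would be a nonzero element of $\ker_\zz B$ whose support is properly contained in $\supp(u)$, contradicting that $u$ is a circuit of $B$. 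Hence $\tilde u$ is a circuit of $A$, so its entries --- and therefore those of $u$ --- lie in $\{0,\pm 1\}$. As this holds for every circuit of $B$, the matrix $B$ is unimodular.

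I do not anticipate any real obstacle here. The only points needing a little care are reading ``same rowspace'' in (\ref{item:rowspace}) over $\qq$ (equivalently over $\rr$), which is what makes the integer kernels genuinely coincide, and the short support-minimality argument in (\ref{item:subset}) showing that a circuit of a submatrix extends to a circuit of the full matrix. Parts (\ref{item:reorder}) and (\ref{item:negative}) can alternatively be read off immediately from the determinantal characterization, part (2) of Definition~\ref{def:unimodular}, since every square submatrix of the modified matrix is, up to a sign or a relabelling, a square submatrix of $A$; but the circuit viewpoint handles all four cases uniformly.
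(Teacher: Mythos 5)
Your proof is correct and follows essentially the same route as the paper: the paper likewise dispatches items (\ref{item:reorder}), (\ref{item:rowspace}), and (\ref{item:subset}) by pointing to the circuit/Graver characterizations (with the remark that circuits of a column-submatrix are exactly the circuits of $A$ supported on those columns), and handles (\ref{item:negative}) via the determinantal characterization, which you also note as an alternative. The only difference is that you carry out the circuit argument uniformly for all four items and spell out the details (the coordinate-permutation automorphism, $\ker_\zz(AD)=D\,\ker_\zz A$, the $\ker_\qq$-to-$\ker_\zz$ step for equal rowspaces, and the zero-extension of a circuit of the submatrix being a circuit of $A$), where the paper simply declares them clear.
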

 \begin{proof}
 	Unimodularity of (\ref{item:reorder}) and (\ref{item:rowspace}) are clear when considering (3) or (4) in Definition \ref{def:unimodular}.
	Unimodularity of (\ref{item:negative}) is clear when considering (2) in Definition \ref{def:unimodular}.
	If $A'$ is a matrix obtained by extracting a set of columns of $A$
	then the circuits of $A'$ are obtained by taking those circuits
	of $A$ whose nonzero elements correspond to the columns of $A$ we are extracting,
	and so (\ref{item:subset}) holds. 
 \end{proof}

\section{Constructions of Unimodular Complexes}\label{sec:basic}

In this section we describe some operations on simplicial complexes 
that preserve unimodularity.
In particular we show that unimodularity is preserved when 
passing to induced subcomplexes, to the Alexander dual of a simplicial complex,
and to the link of a face of a simplicial complex.  We can also
build new unimodular complexes from old ones by adding cone vertices,
ghost vertices, or taking a Lawrence lifting.
We also construct the basic examples of unimodular complexes.
These tools together go in to our constructive description of 
unimodular complexes.

\begin{prop}\label{induced}
	All induced sub-complexes of a unimodular complex are unimodular.
\end{prop}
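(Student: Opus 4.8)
The plan is to induct on the number of deleted vertices, so it suffices to prove that unimodularity is preserved when deleting a single vertex. Write $\calc' = \calc[[n]\setminus\{n\}]$ for the subcomplex of $\calc$ induced on $[n]\setminus\{n\}$. I will compare $\mathcal{A}_{\calc'}$ with a column-submatrix of $\mathcal{A}_\calc$ and show the two have the same row space, after which Proposition \ref{obvious} finishes the job.

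First I would let $B$ be the submatrix of $\mathcal{A}_\calc$ consisting of those columns indexed by $i \in \prod_{j \in [n]}[2]$ with $i_n = 1$; by Proposition \ref{obvious}(\ref{item:subset}), $B$ is unimodular. Identifying the column of $B$ indexed by $i$ with its truncation $(i_1,\dots,i_{n-1})$, the matrix $B$ has the same columns as $\mathcal{A}_{\calc'}$, so their row spaces sit in a common ambient space. Then I would read off the rows of $B$ from the definition of $\mathcal{A}_\calc$: a row indexed by $(F,e)$ with $n \notin F$ restricts to exactly the $(F,e)$-indicator row of the face $F$ of $\calc'$ (and such an $F$, being a facet of $\calc$ missing $n$, is still a facet of $\calc'$); a row indexed by $(F,e)$ with $n \in F$ restricts to the zero row if $e_n = 2$, and if $e_n = 1$ it restricts to the $(F\setminus\{n\},\,e|_{F\setminus\{n\}})$-indicator row of the face $F\setminus\{n\} \in \calc'$.

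The key step is the row-space identification. For one inclusion, every row of $B$ is either zero or an indicator row attached to some face $G \in \calc'$; since $G$ lies in a facet of $\calc'$ and the $G$-marginal of a table is a linear function of the marginal over that facet, every such row lies in the row space of $\mathcal{A}_{\calc'}$. For the reverse inclusion it suffices to check that each facet of $\calc'$ occurs among the faces $G$ indexing rows of $B$: a facet $G$ of $\calc'$ is a maximal face of $\calc$ contained in $[n]\setminus\{n\}$, and choosing a facet $F$ of $\calc$ with $G \subseteq F$, maximality forces $G = F$ when $n \notin F$ and $G = F\setminus\{n\}$ when $n \in F$ (because $F\setminus\{n\}$ is itself a face of $\calc$ avoiding $n$). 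In both cases all $2^{|G|}$ indicator rows of $G$ appear among the rows of $B$, so the row space of $B$ contains that of $\mathcal{A}_{\calc'}$.

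Combining the inclusions, $B$ and $\mathcal{A}_{\calc'}$ have equal row space, and since $B$ is unimodular, Proposition \ref{obvious}(\ref{item:rowspace}) gives that $\mathcal{A}_{\calc'}$ is unimodular; induction then handles arbitrary induced subcomplexes. The only real content is the row-space identification, and the point to watch is that a facet of an induced subcomplex need not be a face of $\calc$ of the same dimension, so one must both account for the ``collapsed'' facets $F\setminus\{n\}$ and invoke the redundancy of a marginal over a subface given a marginal over a facet containing it.
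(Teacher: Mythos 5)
Your proof is correct and takes essentially the same approach as the paper: restrict $\mathcal{A}_\calc$ to the columns with the deleted coordinate fixed at level $1$, then identify the resulting matrix with $\mathcal{A}_{\calc'}$ up to row space and invoke Proposition \ref{obvious}. In fact you are slightly more careful than the paper, which asserts that deleting the zero rows of that column-submatrix yields $\mathcal{A}_{\calc'}$ exactly; strictly speaking one can be left with extra (redundant) rows coming from facets $F$ of $\calc$ for which $F\setminus\{n\}$ is only a proper face of $\calc'$, and your row-space argument correctly handles this.
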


\begin{proof}
	Let $\calc'$ be the induced subcomplex of $\calc$ obtained by restricting to some vertex set $F \subseteq [n]$.
	Let $B$ be the matrix obtained by taking the columns of $\mathcal{A}_{\calc}$
	corresponding to the elements  $  i = (i_1, \ldots, i_n)  \in\prod_{j \in [n]} [d_j]$ such
	that $i_j = 1$ for all $j \in [n] \setminus F$.
	{Let $B'$ be the matrix obtained from $B$ by removing rows of all zeros.
	Then $B' = \mathcal{A}_{\mathcal{C}'}$.
	So $\mathcal{A}_{\mathcal{C}'}$ and $B$ have the same Graver basis,
	which is a subset of the Graver basis of $\mathcal{A}_\calc$.
	So unimodularity of $\mathcal{A}_\calc$ implies unimodularity of $\mathcal{A}_{\calc'}$.}
\end{proof}

Next we will show that unimodularity is preserved under the Alexander duality
operation.

\begin{defn}
	Let $\mathcal{C}$ be a simplicial complex on $[n]$.  The \emph{Alexander dual} of $\mathcal{C}$, denoted $\calc^*$ is defined as:
	\[
		\mathcal{C}^*=\{S\subseteq [n] : [n]\setminus S \notin \mathcal{C}\}.
	\]
\end{defn}

Note that if $\mathcal{C}$ has $d$ faces, then $\mathcal{C}^*$ has $2^n-d$ faces
and that the facets of $\mathcal{C}^*$ are the complements of the \emph{minimal} non-faces of $\mathcal{C}$.

{
Since we are now restricting attention to the binary case, where $d_i = 2$ for each $i \in [n]$,
entries of vectors in $\rr^{\bfd}$ are indexed by elements of $\{1,2\}^n$
and entries of vectors in $\bigoplus_{f \in \facet(\mathcal{C})} \rr^{\bfd_F}$ are indexed by pairs $(S,\bfj)$
where $S \in \facet(\mathcal{C})$ and $\bfj \in \{1,2\}^{S}$.
However, the statement and proof of Proposition \ref{spanningset} become cleaner
if we replace $\{1,2\}^n$ and $\{1,2\}^S$ by $\{0,1\}^n$ and $\{0,1\}^S$, respectively.
So we adapt this convention for the remainder of this section.
We now introduce some notation that we use in Propositions \ref{dual_matrix} and \ref{spanningset}.

\begin{defn}
	Let $S \subseteq [n]$ and $\bfj \in \{0,1\}^S$ and $\bfi \in \{0,1\}^{[n]\setminus S}$.
	Then $e_{\bfi,\bfj}$ denotes the element $v \in \rr^{\{0,1\}^n}$ such that $v_{\bfk} = 1$ if $k_t = j_t$
	when $t \in S$ and $k_t = i_t$ when $t \in [n]\setminus S$, and $v_{\bfk} = 0$ otherwise.
\end{defn}
}

We are now ready to give an explicit description of $\mathcal{A}_{\mathcal{C}^*}$.

\begin{prop}\label{dual_matrix}
	Let $\mathcal{C}$ be a simplicial complex on $[n]$.
	Let $M$ be a matrix with the following set of columns:
	\[
		\left\{\sum_{\bfj \in \{0,1\}^S} e_{\bfi,\bfj} : S \textnormal{ is a minimal non-face of } \mathcal{C}, \bfi \in \{0,1\}^{[n]\setminus S}\right\}.
	\]
	Then $\mathcal{A}_{\mathcal{C}^*} = M^T$.
\end{prop}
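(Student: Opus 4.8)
The plan is to directly compute the matrix $\mathcal{A}_{\mathcal{C}^*}$ from its definition and compare it column-by-column with $M^T$, i.e.\ row-by-row of $M^T$ with the columns of $M$. Recall that the columns of $\mathcal{A}_{\mathcal{C}^*}$ are indexed by $\bfk \in \{0,1\}^n$ and its rows by pairs $(G, \bfj)$ where $G$ is a facet of $\mathcal{C}^*$ and $\bfj \in \{0,1\}^G$; by the remark following the definition of Alexander dual, the facets of $\mathcal{C}^*$ are exactly the complements $G = [n]\setminus S$ of minimal non-faces $S$ of $\mathcal{C}$. So a row of $\mathcal{A}_{\mathcal{C}^*}$ is indexed by a pair $(S, \bfj)$ with $S$ a minimal non-face of $\mathcal{C}$ and $\bfj \in \{0,1\}^{[n]\setminus S}$, which is precisely the indexing set for the columns of $M$ (after renaming $\bfi \leftrightarrow \bfj$). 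This bookkeeping identification of the two index sets is the first thing I would state carefully.

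**Next**, I would fix a minimal non-face $S$ of $\mathcal{C}$ and an $\bfi \in \{0,1\}^{[n]\setminus S}$ and verify that the corresponding column of $M$ equals the corresponding row of $\mathcal{A}_{\mathcal{C}^*}$ as vectors indexed by $\bfk \in \{0,1\}^n$. From the explicit description of $\mathcal{A}_{\calc,\bfd}$ given in the text, the entry of $\mathcal{A}_{\mathcal{C}^*}$ in row $(G,\bfe)$ (with $G = [n]\setminus S$) and column $\bfk$ is $1$ exactly when $\bfk$ restricted to $G$ equals $\bfe$, and $0$ otherwise — here $\bfe$ plays the role of $\bfi$. On the $M$ side, the column $\sum_{\bfj \in \{0,1\}^S} e_{\bfi,\bfj}$ has, by the definition of $e_{\bfi,\bfj}$, a $1$ in position $\bfk$ exactly when $\bfk$ agrees with $\bfi$ on $[n]\setminus S$ and agrees with \emph{some} $\bfj \in \{0,1\}^S$ on $S$ — but the latter is automatic, so the condition is just $\bfk|_{[n]\setminus S} = \bfi$. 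These two conditions on $\bfk$ coincide, so the entries match. Summing over all the $1$'s in $e_{\bfi,\bfj}$ is disjoint across $\bfj$ (different $\bfj$ give different $\bfk$), so no cancellation or doubling occurs and the column of $M$ is genuinely a $0/1$ vector.

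**The only real subtlety** — and the step I expect to need the most care — is confirming that the index set for rows of $\mathcal{A}_{\mathcal{C}^*}$ is in bijection with the column set of $M$ with \emph{no repeats and nothing missing}: one must check that distinct pairs $(S,\bfi)$ and $(S',\bfi')$ give distinct columns of $M$ (immediate, since they have different supports when $S\neq S'$ or different restrictions when $\bfi\neq\bfi'$), and that every facet of $\mathcal{C}^*$ arises as $[n]\setminus S$ for a \emph{minimal} non-face $S$ — this is exactly the parenthetical remark already recorded after the definition of the Alexander dual, which I would cite. Once the index sets are matched and the entrywise computation above is done, we conclude $M^T = \mathcal{A}_{\mathcal{C}^*}$, and the proof is complete. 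I would close by remarking that this description is what makes Proposition~\ref{dual_uni} (Alexander duality preserves unimodularity) accessible, since $M^T$ having a nice $0/1$ structure is precisely what is needed downstream.
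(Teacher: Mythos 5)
Your proof is correct and takes essentially the same approach as the paper's: index both sides by the pair $(S,\bfi)$ (using the bijection between minimal non-faces of $\mathcal{C}$ and facets of $\mathcal{C}^*$) and verify entrywise that $M(\bfk,(S,\bfi))=1$ if and only if $\bfk|_{[n]\setminus S}=\bfi$, which matches the defining condition for an entry of $\mathcal{A}_{\mathcal{C}^*}$ to equal $1$. The extra care you take in checking that the index sets are in honest bijection with no repeats is implicit in the paper's argument, but worth having spelled out.
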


\begin{proof}
	We index the columns of $M$ by pairs $(S,\bfi)$ where $S$ is a minimal non-face of $\mathcal{C}$, and $\bfi \in \{0,1\}^{[n]\setminus S}$.
	The rows of $M$ are indexed by elements of $\{0,1\}^n$.
	By construction, $M(\bfk,(S,\bfi))=1$ if and only if $\bfk|_{[n]\setminus S} = \bfi$.
	Similarly, when $F$ is a facet of $\mathcal{C}^*$, $\mathcal{A}_{\mathcal{C}^*}((F,\bfi),\bfk) = 1$ if and only $\bfk|_{F}=\bfi$.
	The result follows since $S$ is a minimal non-face of $\mathcal{C}$ if and only if $[n]\setminus S$ is a facet of $\mathcal{C}^*$.
\end{proof}

In order to relate the unimodularity of $\mathcal{A}_\mathcal{C}$ and $\mathcal{A}_{\mathcal{C}^*}$, we need two propositions.  The first
is a standard result from the theory of matroid duality, {though we could not find
a precise reference}.

\begin{prop}\label{dualdet}
	Let $A \in \kk^{r\times n}$ have rank $r$.
	Let $B \in \kk^{(n-r)\times n}$ have rank $n-r$ such that $AB^T=0$.
	Then there exists a non-zero scalar $\lambda \in \kk^*$ such that for all $S\subseteq[n]$ with $\#S=r$,
	\[\det(A_S)= \pm\lambda \det(B_{[n]\setminus S}).\]
\end{prop}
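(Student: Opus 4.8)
The plan is to exhibit a single polynomial identity in the Plücker coordinates of $A$ and $B$ and then specialize. First I would set up notation: for $S \subseteq [n]$ with $\#S = r$, write $p_S = \det(A_S)$ for the maximal minor of $A$ on the columns indexed by $S$, and for $T \subseteq [n]$ with $\#T = n-r$ write $q_T = \det(B_T)$. Since $\rank A = r$, the row space of $A$ is an $r$-dimensional subspace $V \subseteq \kk^n$, and since $AB^T = 0$ and $\rank B = n-r$, the row space of $B$ is exactly $V^\perp$ (it is contained in $V^\perp$ by the orthogonality relation and has the right dimension). The Plücker coordinates $(p_S)$ are the coordinates of $\bigwedge^r V$ inside $\bigwedge^r \kk^n$, up to the global scalar coming from the choice of basis (i.e.\ the particular matrix $A$ representing $V$), and similarly $(q_T)$ are the Plücker coordinates of $\bigwedge^{n-r} V^\perp$.

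The key step is the classical fact that, under the Hodge star / contraction pairing between $\bigwedge^r \kk^n$ and $\bigwedge^{n-r}\kk^n$, the Plücker vector of a subspace $V$ is sent (up to a nonzero scalar depending only on $n,r$ and the chosen volume form) to the Plücker vector of $V^\perp$, with the sign change on coordinates being exactly $\pm$ according to the sign of the shuffle permutation that interleaves $S$ and $[n]\setminus S$. Concretely, one has $p_S = \pm c\, q_{[n]\setminus S}$ for all $S$, where $c$ is a single nonzero constant that does not depend on $S$; the sign $\pm$ is $(-1)^{\sigma(S)}$ where $\sigma(S)$ is the sign of the permutation sorting $(S, [n]\setminus S)$. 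I would either cite this directly or prove it in two lines: pick any $S_0$ with $p_{S_0} \neq 0$ (possible since $\rank A = r$), and verify the identity by a Laplace/cofactor expansion argument, or by reducing to the case where $A = (I_r \mid X)$ after invertible row operations — which change all $p_S$ by a common scalar and hence do not affect the ratios — and then $B$ can be taken as $(-X^T \mid I_{n-r})$ up to a common scalar on the $q_T$, at which point the identity $\det(A_S) = \pm \det(B_{[n]\setminus S})$ is the standard formula for complementary minors of $(I \mid X)$ and $(-X^T \mid I)$.

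Finally I would set $\lambda = c$, the common nonzero scalar obtained above, so that $\det(A_S) = \pm\lambda\det(B_{[n]\setminus S})$ for every $r$-subset $S$, which is the claim. The main obstacle is purely bookkeeping: getting the signs to be uniform in the sense the statement allows (it only claims ``$\pm$'', so I do not actually need to pin the sign down, which simplifies things considerably) and making sure that the ``common scalar'' really is common — i.e.\ that passing from $A$ to row-reduced form multiplies \emph{all} maximal minors by the \emph{same} factor (the determinant of the row operation), and likewise for $B$; this is where a little care is needed but no real difficulty arises.
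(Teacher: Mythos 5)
Your concrete alternative --- row-reduce $A$ to $(I_r \mid X)$, take $B = (-X^T \mid I_{n-r})$, and compare complementary minors --- is exactly the paper's proof; the Pl\"ucker/Hodge-star framing is a conceptual gloss on the same computation. One small fix: row operations alone do not always bring $A$ to the form $(I_r \mid X)$ (the first $r$ columns might be dependent), so you also need a column permutation as the paper allows, which is harmless since it permutes the families $(p_S)$ and $(q_T)$ consistently.
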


{
\begin{proof}
After multiplying by an element of $GL_r(\kk)$ and permuting columns, we can assume
$A$ has the form $A = ( I_r \, \, M)$ where $I_r$ is an $r \times r$ identity matrix.  Then
we can suppose $B$ has the form  $B = (-M^T \, \,  I_{n-r})$.  Then any $\det(A_S)$ is a minor
of $M$ with row indices $[r] \setminus S$ and column indices $S \setminus [r]$.
The determinant $\det(B_{[n] \setminus S})$ is the same minor of $M$, up to a sign. 
\end{proof}
}

\begin{prop}\label{spanningset}
	Let $\mathcal{C}$ be a simplicial complex on $[n]$.
	Then the following set of vectors spans $\ker(\mathcal{A}_\mathcal{C})$:
	\[
		K=\left\{\sum_{\bfj \in \{0,1\}^S} (-1)^{\|\bfj\|_1} e_{\bfi,\bfj} : S \textnormal{ is a minimal non-face of } \mathcal{C}, \bfi \in \{0,1\}^{[n]\setminus S}\right\}.
	\]
	Furthermore, if we view the entries of $K$ as the columns of a matrix $M'$, then we can multiply some set of
	rows and columns of $M'$ by $-1$ and get $M$, as defined in Proposition \ref{dual_matrix}.
\end{prop}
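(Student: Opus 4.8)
The plan is to prove the two assertions of Proposition \ref{spanningset} in turn, with the second one being essentially bookkeeping once the first is done.

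\textbf{Step 1: The vectors in $K$ lie in $\ker(\mathcal{A}_\mathcal{C})$.}
Fix a minimal non-face $S$ of $\mathcal{C}$ and $\bfi \in \{0,1\}^{[n]\setminus S}$, and let $w = \sum_{\bfj \in \{0,1\}^S} (-1)^{\|\bfj\|_1} e_{\bfi,\bfj}$. I must check that $\mathcal{A}_\mathcal{C} w = 0$, i.e. that for every facet $F$ of $\mathcal{C}$ and every $\bfe \in \{0,1\}^F$, the $F$-marginal of $w$ at $\bfe$ vanishes. The key observation is that since $S \notin \mathcal{C}$ and $F \in \mathcal{C}$, we have $S \not\subseteq F$, so there is some coordinate $t \in S \setminus F$. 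The $F$-marginal of $e_{\bfi,\bfj}$ is a $\pm 1$ indicator that only depends on $\bfj$ through its restriction to $S \cap F$. Grouping the sum over $\bfj \in \{0,1\}^S$ by pairing each $\bfj$ with the vector obtained by flipping coordinate $t$, the signs $(-1)^{\|\bfj\|_1}$ cancel in pairs while the marginal contributions agree; hence the $F$-marginal of $w$ is zero. This shows $K \subseteq \ker(\mathcal{A}_\mathcal{C})$.

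\textbf{Step 2: $K$ spans $\ker(\mathcal{A}_\mathcal{C})$.}
Here I would use a dimension count. The span of $K$ has dimension at least the rank of the matrix $M'$ whose columns are the elements of $K$. By the second assertion (whose proof is independent of spanning — it only rearranges signs), $M'$ is obtained from $M$ of Proposition \ref{dual_matrix} by sign changes on rows and columns, so $\rank M' = \rank M = \rank M^T = \rank \mathcal{A}_{\mathcal{C}^*}$. Now $\mathcal{A}_{\mathcal{C}^*}$ has $\sum_{F \in \facet(\mathcal{C}^*)} 2^{\#F}$ rows and these rows are known to be linearly independent (the marginal maps for distinct facets of a simplicial complex, together, give a surjection — alternatively, one can see linear independence directly since each facet block contains a row supported on columns no other facet's block touches in the same pattern; in any case $\mathcal{A}_{\mathcal{C}^*}$ is surjective as a marginal map, a standard fact). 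Thus $\rank \mathcal{A}_{\mathcal{C}^*} = \sum_{S \text{ min.\ non-face}} 2^{n - \#S}$, which is exactly the number of pairs $(S,\bfi)$, i.e. $\#K$. On the other hand, $\dim \ker(\mathcal{A}_\mathcal{C}) = 2^n - \rank \mathcal{A}_\mathcal{C} = 2^n - \#\mathcal{C}$ (again using surjectivity of the marginal map $\pi_{\mathcal{C},\mathbf{2}}$), and $\#\mathcal{C}^* = 2^n - \#\mathcal{C}$, so $\rank \mathcal{A}_{\mathcal{C}^*} = \#\mathcal{C}^* = 2^n - \#\mathcal{C} = \dim\ker(\mathcal{A}_\mathcal{C})$. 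Combining, $\dim \operatorname{span}(K) \geq \#K = \dim \ker(\mathcal{A}_\mathcal{C})$, and since $K \subseteq \ker(\mathcal{A}_\mathcal{C})$ by Step 1, equality holds and $K$ spans.

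\textbf{Step 3: Relating $M'$ and $M$.}
This is the purely combinatorial claim. Writing $M$ with columns indexed by pairs $(S,\bfi)$ and rows by $\bfk \in \{0,1\}^n$, we have $M(\bfk,(S,\bfi)) = 1$ iff $\bfk|_{[n]\setminus S} = \bfi$, and $M'(\bfk,(S,\bfi)) = (-1)^{\|\bfk|_S\|_1}$ in that case, $0$ otherwise. So on the support, $M'(\bfk,(S,\bfi)) = (-1)^{\|\bfk|_S\|_1} M(\bfk,(S,\bfi))$. The obstacle is that the sign $(-1)^{\|\bfk|_S\|_1}$ depends on both $\bfk$ and $S$, so it is not immediately a product of a row sign and a column sign. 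The fix: on the support of the $(S,\bfi)$ column we have $\bfk|_{[n]\setminus S} = \bfi$, hence $\|\bfk\|_1 = \|\bfk|_S\|_1 + \|\bfi\|_1$, so $(-1)^{\|\bfk|_S\|_1} = (-1)^{\|\bfk\|_1}\cdot(-1)^{\|\bfi\|_1}$. Thus multiplying row $\bfk$ by $(-1)^{\|\bfk\|_1}$ and column $(S,\bfi)$ by $(-1)^{\|\bfi\|_1}$ converts $M$ into $M'$ (and vice versa), proving the final assertion. I expect Step 3's sign bookkeeping and the careful pairing argument in Step 1 to be the only places requiring genuine care; the rest is dimension counting using the already-established Proposition \ref{dual_matrix} and standard surjectivity of marginal maps.
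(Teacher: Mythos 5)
Your Steps 1 and 3 are correct and are essentially the paper's argument: the paper simply asserts $K \subseteq \ker(\mathcal{A}_\mathcal{C})$ while you supply the pairing argument (correct — since $S$ is a non-face and $F$ a facet, $S \setminus F \neq \emptyset$, and flipping $\bfj$ at $t \in S\setminus F$ pairs off contributions with opposite sign but identical $F$-marginal), and your Step 3 sign factorization $(-1)^{\|\bfk|_S\|_1} = (-1)^{\|\bfk\|_1}(-1)^{\|\bfi\|_1}$ on the support is a cleaner version of the paper's parity case analysis.

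However, Step 2 contains a genuine error. You claim that the rows of $\mathcal{A}_{\mathcal{C}^*}$ are linearly independent, so that $\rank \mathcal{A}_{\mathcal{C}^*} = \sum_{F \in \facet(\mathcal{C}^*)} 2^{\#F} = \#K$, and that the marginal map $\pi_{\mathcal{C}^*,\mathbf{2}}$ is surjective. This is false whenever $\mathcal{C}^*$ has two or more facets: summing all rows in any facet's block gives the grand total, so the blocks for distinct facets always share a linear dependency, and the marginal map is never onto the full direct sum when facets overlap. Concretely, for $\mathcal{C}$ on $\{1,2,3\}$ with facets $\{1\},\{2\},\{3\}$, one has $\#K = 3\cdot 2 = 6$ but $\dim\ker(\mathcal{A}_\mathcal{C}) = 8 - 4 = 4$, so $K$ is a spanning set that is not a basis and $\rank\mathcal{A}_{\mathcal{C}^*} \neq \#K$. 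Your closing line ``$\dim\operatorname{span}(K) \geq \#K = \dim\ker(\mathcal{A}_\mathcal{C})$'' inherits this error and is also backwards (one always has $\dim\operatorname{span}(K) \leq \#K$). The correct rank formula $\rank\mathcal{A}_{\mathcal{C}^*} = \#\mathcal{C}^*$ is the nontrivial theorem of Ho\c{s}ten--Sullivant that the paper cites; it is not a consequence of surjectivity of the marginal map (and your parenthetical ``(again using surjectivity)'' for $\rank\mathcal{A}_\mathcal{C} = \#\mathcal{C}$ is the same mistaken justification). The repair is local: delete the false linear-independence claim, cite the rank theorem directly, and replace the final line with the equality chain $\dim\operatorname{span}(K) = \rank M' = \rank\mathcal{A}_{\mathcal{C}^*} = \#\mathcal{C}^* = 2^n - \#\mathcal{C} = \dim\ker(\mathcal{A}_\mathcal{C})$, which together with $K \subseteq \ker(\mathcal{A}_\mathcal{C})$ finishes the proof — this is exactly what the paper does.
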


\begin{proof}
	First, note that for any $x \in K$, $\mathcal{A}_\mathcal{C} x=0$, so $K\subset \ker(\mathcal{A}_\mathcal{C})$.
	If we view the entries of $K$ as the columns of a matrix $M'$, we are done if we show that $\rank(M') = \dim(\ker(\mathcal{A}_\mathcal{C}))$.
	We proceed by proving the second statement of the lemma.
	The first statement will follow since Proposition \ref{dual_matrix} implies $\rank(M)=\rank(\mathcal{A}_{\mathcal{C}^*})$ and we know
	$\rank(\mathcal{A}_{\mathcal{C}^*})= 2^n-\#\mathcal{C}=\dim(\ker(\mathcal{A}_\mathcal{C}))$ (\cite[Thm 2.6]{Hosten2002}).
	\\
	\indent
	As before, we index the columns of $M'$ by pairs $(S,\bfi)$ where $S$ is a minimal non-face of $\mathcal{C}$,
	and we index the rows of $M'$ by the binary $n$-tuples.
	Let $M'_{(S,\bfi)}$ be a column of $M'$.
	For any $\bfk \in \{0,1\}^n$, the entry $M'_{(S,\bfi),\bfk}$ is nonzero if and only if $\bfk|_{[n]\setminus S} = \bfi$ - i.e. for each column, the $\bfk$ for
	each nonzero entry has fixed $\bfk|_{[n]\setminus S} = \bfi$.
	Now in this case, recall that $M'_{(S,\bfi),\bfk} = +1$ if $\|\bfj\|_1$ is even, and $M'_{(S,\bfi),\bfk} = -1$ if $\|\bfj\|_1$ is odd.
	Therefore, when $\|\bfi\|_1$ is even, $M'_{(S,\bfi),\bfk} = +1$ if $\|\bfk\|_1$ is even and $M'_{(S,\bfi),\bfk} = -1$ when $\|\bfk\|_1$ is odd.
	And when $\|\bfi\|_1$ is odd, $M'_{(S,\bfi),\bfk} = +1$ when $\|\bfk\|_1$ is odd and $M'_{(S,\bfi),\bfk} = -1$ when $\|\bfk\|_1$ is even.
	So for a fixed column $M'_{(S,\bfi)}$, the sign of a nonzero entry $M'_{(S,\bfi),\bfk}$ depends only on the parity of $\bfk$.
	So if we multiply all the rows $M'_{\cdot,\bfk}$ such that $\|\bfk\|_1$ is odd by $-1$, the entries in each column will all have the same sign.
	Then we can multiply all the negative columns by $-1$, to arrive at the matrix
	from Proposition \ref{dual_matrix}.
\end{proof}

Now we are ready to show that Alexander duality preserves unimodularity.

\begin{prop}\label{dual_uni}
	Let $\mathcal{C}$ be a simplicial complex on ground set $[n]$.
	Then $\mathcal{C}$ is unimodular if and only if $\mathcal{C}^*$ is unimodular.
\end{prop}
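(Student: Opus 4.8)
The strategy is to combine the preceding three propositions. By Proposition~\ref{dual_matrix}, $\mathcal{A}_{\mathcal{C}^*} = M^T$ for an explicitly described $M$, and by Proposition~\ref{spanningset} the columns of a closely related matrix $M'$ span $\ker(\mathcal{A}_{\mathcal{C}})$, where $M'$ differs from $M$ only by sign changes on some rows and columns. Since unimodularity is invariant under scaling columns by $-1$ (Proposition~\ref{obvious}\eqref{item:negative}) and under passing to any matrix with the same row space (Proposition~\ref{obvious}\eqref{item:rowspace}) — and scaling a \emph{row} by $-1$ preserves the row space — the matrices $M$ and $M'$ are simultaneously unimodular or not. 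So it suffices to relate unimodularity of $\mathcal{A}_{\mathcal{C}}$ to that of $M'^T$, i.e. of a matrix whose rows span $\ker(\mathcal{A}_{\mathcal{C}})$.

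The bridge is Proposition~\ref{dualdet}. Let $A = \mathcal{A}_{\mathcal{C}}$ with $\rank A = r$, and let $B$ be the matrix whose rows form a maximal linearly independent subset of the columns of $M'$, so $B$ has rank $2^n - r$ and $AB^T = 0$. By Proposition~\ref{dualdet} there is a nonzero scalar $\lambda$ with $\det(A_S) = \pm\lambda\det(B_{[n]\setminus S})$ for every $r$-subset $S$ of the columns (here $[n]$ is replaced by the index set $\{0,1\}^n$ of columns of $A$). Using characterization (2) of unimodularity: $A$ is unimodular iff all its $r\times r$ minors lie in $\{0,\pm\mu\}$ for some $\mu$, iff (by the determinantal identity) all $(2^n-r)\times(2^n-r)$ minors of $B$ lie in $\{0,\pm\mu/\lambda\}$, iff $B$ is unimodular. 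Since $B$ has full row rank and spans the same row space as $M'$ (hence as $M$, after sign changes), this says $A = \mathcal{A}_{\mathcal{C}}$ is unimodular iff $M$, equivalently $\mathcal{A}_{\mathcal{C}^*} = M^T$, is unimodular. Applying this equivalence to $\mathcal{C}^*$ in place of $\mathcal{C}$ and using $(\mathcal{C}^*)^* = \mathcal{C}$ gives the "if" direction as well, though in fact the argument is already symmetric.

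**Main obstacle.** The one point requiring care is the reduction from "$M'$ spans $\ker(\mathcal{A}_{\mathcal{C}})$" to a full-rank matrix $B$ to which Proposition~\ref{dualdet} literally applies: Proposition~\ref{dualdet} is stated for a matrix $B$ of rank exactly $n-r$, whereas $M'$ may have more rows (columns, in its own indexing) than its rank. One must either pass to a maximal independent subset of columns of $M'$ — observing, via Proposition~\ref{obvious}\eqref{item:subset} applied to $M'^T$, that unimodularity of $M'^T$ is equivalent to unimodularity of that submatrix, since every circuit of $M'^T$ is supported on... — actually more cleanly: $M'^T$ and $\mathcal{A}_{\mathcal{C}^*}$ have the same row space by Proposition~\ref{spanningset}, and $\mathcal{A}_{\mathcal{C}^*}$ has full row rank $2^n - r$, so one simply takes $B = \mathcal{A}_{\mathcal{C}^*}$ directly (after the sign adjustments of Proposition~\ref{spanningset}) and checks $\mathcal{A}_{\mathcal{C}}\mathcal{A}_{\mathcal{C}^*}^T = 0$, which holds because each column of $M'$ lies in $\ker(\mathcal{A}_{\mathcal{C}})$. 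With $B = \mathcal{A}_{\mathcal{C}^*}$ of rank $2^n - r$, Proposition~\ref{dualdet} applies on the nose and the minor-comparison argument goes through, so the remaining work is purely bookkeeping with the sign changes, which Proposition~\ref{obvious} handles.
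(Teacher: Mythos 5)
Your proposal is correct and takes essentially the same route as the paper: view the kernel-spanning set $K$ from Proposition~\ref{spanningset} as the columns of a matrix $M'$, apply Proposition~\ref{dualdet} to transfer unimodularity between $\mathcal{A}_\mathcal{C}$ and $M'^T$, observe that $M'$ and $M$ differ only by row/column sign changes, and conclude via $\mathcal{A}_{\mathcal{C}^*} = M^T$. Your added care about the rank hypothesis of Proposition~\ref{dualdet} (passing to a full-row-rank matrix with the same row space before invoking it) is a fair point, one the paper glosses over and which Proposition~\ref{obvious}(\ref{item:rowspace}) handles cleanly.

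One small slip in your ``cleaner'' alternative: the identity $\mathcal{A}_{\mathcal{C}}\mathcal{A}_{\mathcal{C}^*}^T = 0$ is \emph{false} as written. We have $\mathcal{A}_{\mathcal{C}^*}^T = M$, whose columns are the all-ones indicator sums $\sum_{\bfj}e_{\bfi,\bfj}$; those are not in $\ker(\mathcal{A}_{\mathcal{C}})$ --- it is the alternating-sign vectors in $K = M'$ that are. What does hold is $\mathcal{A}_{\mathcal{C}}M' = 0$, equivalently $(\mathcal{A}_{\mathcal{C}}D_1)\mathcal{A}_{\mathcal{C}^*}^T = 0$ where $D_1$ is the diagonal $\pm1$ matrix implementing the row sign changes from Proposition~\ref{spanningset} (and $\mathcal{A}_{\mathcal{C}}D_1$ is unimodular iff $\mathcal{A}_{\mathcal{C}}$ is, by Proposition~\ref{obvious}(\ref{item:negative})). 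Your parenthetical ``after the sign adjustments'' suggests you know this; just be careful that the equation as displayed conflates $M$ with $M'$. With that fix the argument matches the paper's.
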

\begin{proof}
	Consider the set $K$ from Proposition \ref{spanningset} as a matrix of column vectors.
	Since $K$ spans the kernel of $\mathcal{A}_\mathcal{C}$, Proposition \ref{dualdet} implies that $K^T$ is unimodular if and only if $\mathcal{A}_\mathcal{C}$ is unimodular.
	Then we can multiply the appropriate rows and columns of $K$ by $-1$ to get $M$ as in Proposition \ref{dual_matrix}.
	Since these operations preserve the absolute values of full rank determinants, $K^T$ is unimodular if and only if $M^T$ is.
	Proposition \ref{dual_matrix} says that $\mathcal{A}_{\mathcal{C}^*} = M^T$.
\end{proof}

Taking Alexander duals and induced complexes gives rise to another unimodularity preserving operation which we now define.
\begin{defn}
	Let $S  \in \mathcal{C}$ be a face of $\calc$.
	Then the link of $S$ in $\mathcal{C}$ is the new simplicial complex
	\[
		\link_S( \mathcal{C} ) = \left\{F\setminus S : F \in \mathcal{C} \mbox{ and } S \subseteq F \right\}.
	\]
	When $S = \{v\}$, we simply write $\link_v(\mathcal{C}) := \link_{\{v\}}( \calc)$.
\end{defn}

Note that we can obtain $\link_S(\mathcal{C})$ by repeatedly taking links with respect to vertices.
That is if $S$ is a face of $\calc$ and  $\#S \ge 2$ and $v \in S$, then
\[
	\link_S(\mathcal{C}) = \link_{v}(\link_{S\setminus\{v\}}(\mathcal{C})).
\]

\begin{prop}\label{linkdual}
	If $\mathcal{C}$ is a simplicial complex and $S$ is a face of $\calc$, 
	then $\link_S(\mathcal{C}) = (\mathcal{C}^*\setminus S)^*$. \qedhere
\end{prop}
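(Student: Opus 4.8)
The plan is to unwind both sides as explicit set systems on the vertex set $[n]\setminus S$ and check they describe the same collection of subsets. Recall that $\mathcal{C}^*\setminus S$ denotes the induced subcomplex of $\mathcal{C}^*$ on the ground set $[n]\setminus S$, so its faces are exactly the faces of $\mathcal{C}^*$ that are disjoint from $S$. First I would write out, for a subset $T\subseteq [n]\setminus S$, what it means for $T$ to lie in $(\mathcal{C}^*\setminus S)^*$: by the definition of Alexander dual (now taken relative to the ground set $[n]\setminus S$), this says $([n]\setminus S)\setminus T \notin \mathcal{C}^*\setminus S$. Since $([n]\setminus S)\setminus T$ is automatically a subset of $[n]\setminus S$, being absent from the induced complex $\mathcal{C}^*\setminus S$ is the same as being absent from $\mathcal{C}^*$ itself.

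Next I would translate ``$([n]\setminus S)\setminus T \notin \mathcal{C}^*$'' back through the definition of $\mathcal{C}^*$: a set $U\subseteq[n]$ fails to be in $\mathcal{C}^*$ precisely when $[n]\setminus U \in \mathcal{C}$. Applying this with $U = ([n]\setminus S)\setminus T = [n]\setminus(S\cup T)$ (using that $S$ and $T$ are disjoint) gives the condition $S\cup T \in \mathcal{C}$. So $T \in (\mathcal{C}^*\setminus S)^*$ if and only if $S\cup T\in\mathcal{C}$, which — since $T\subseteq[n]\setminus S$ — is exactly the defining condition for $T\in\link_S(\mathcal{C})$. That chain of equivalences is the whole argument.

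The one genuine subtlety, and the step I expect to need the most care, is the bookkeeping about \emph{which ground set} each Alexander-dual operation is taken over: the outer $*$ in $(\mathcal{C}^*\setminus S)^*$ must be interpreted relative to $[n]\setminus S$, not $[n]$, and one has to confirm that passing to the induced subcomplex before dualizing is what makes ``non-face of the induced complex'' coincide with ``non-face of $\mathcal{C}^*$.'' Once that convention is pinned down, the complementation identity $[n]\setminus(S\cup T) = ([n]\setminus S)\setminus T$ does all the work. I would also remark, for completeness, that $\link_S(\mathcal{C})$ is indeed a simplicial complex on $[n]\setminus S$ and that $S\in\mathcal{C}$ guarantees it is nonempty, so the statement is not vacuous; but these are immediate from the definitions and need not interrupt the main computation.
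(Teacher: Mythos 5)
Your proof is correct and follows essentially the same route as the paper's: unwind the definition of $(\mathcal{C}^*\setminus S)^*$, observe that a subset of $[n]\setminus S$ is absent from the induced subcomplex $\mathcal{C}^*\setminus S$ exactly when it is absent from $\mathcal{C}^*$, and then use the complementation identity $[n]\setminus\bigl(([n]\setminus S)\setminus T\bigr) = S\cup T$ to land on the defining condition for $\link_S(\mathcal{C})$. Your explicit remark about the outer dual being taken relative to the ground set $[n]\setminus S$ is a welcome clarification that the paper leaves implicit.
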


\begin{proof}
	By definition, we have:
	\[
		(\mathcal{C}^*\setminus S)^* = \left\{R \subseteq [n] \setminus S : \left([n]\setminus S\right) \setminus R \notin \mathcal{C}^*\setminus S \right\}.
	\]
	Then we have the following chain of equivalences on some $R \subseteq [n] \setminus S$:
	\begin{align*}
		&([n] \setminus S)\setminus R \notin \mathcal{C}^*\setminus S
		\\&\iff ([n] \setminus S)\setminus R \notin \mathcal{C}^*
		\\&\iff [n] \setminus \left(([n] \setminus S)\setminus R\right) \in \mathcal{C}
	\end{align*}
	and since $[n] \setminus \left(([n] \setminus S)\setminus R\right) = R \cup S$, we have $R \cup S \in \mathcal{C}$.
\end{proof}

\begin{cor}\label{link}
	Let $\mathcal{C}$ be a unimodular simplicial complex on ground set $[n]$.
	Then for any face $S$ of $\calc$, $\link_S( \mathcal{C} )$ is unimodular.
\end{cor}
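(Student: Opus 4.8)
The plan is to obtain this as a purely formal consequence of three results already established: Proposition~\ref{dual_uni} (Alexander duality preserves unimodularity), Proposition~\ref{induced} (induced subcomplexes of unimodular complexes are unimodular), and Proposition~\ref{linkdual}, which gives the identity $\link_S(\mathcal{C}) = (\mathcal{C}^*\setminus S)^*$. The corollary should then drop out by chaining these together.

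Concretely, I would argue as follows. Start from the hypothesis that $\mathcal{C}$ is unimodular and apply Proposition~\ref{dual_uni} to conclude that $\mathcal{C}^*$ is unimodular. Next, note that $\mathcal{C}^*\setminus S$ is precisely the subcomplex of $\mathcal{C}^*$ induced on the ground set $[n]\setminus S$, so Proposition~\ref{induced} shows that $\mathcal{C}^*\setminus S$ is unimodular (now as a simplicial complex on ground set $[n]\setminus S$). Applying Proposition~\ref{dual_uni} a second time, this time to the complex $\mathcal{C}^*\setminus S$ over the ground set $[n]\setminus S$, gives that its Alexander dual $(\mathcal{C}^*\setminus S)^*$ is unimodular. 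Finally, Proposition~\ref{linkdual} identifies $(\mathcal{C}^*\setminus S)^*$ with $\link_S(\mathcal{C})$, which finishes the proof. (Alternatively, one could induct on $\#S$ using the decomposition $\link_S(\mathcal{C}) = \link_v(\link_{S\setminus\{v\}}(\mathcal{C}))$ noted before Proposition~\ref{linkdual}, with the single-vertex case $\link_v(\mathcal{C}) = (\mathcal{C}^*\setminus v)^*$ handled by the same two-step duality/deletion argument; but invoking Proposition~\ref{linkdual} directly for general $S$ is cleaner.)

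The only thing that needs a moment of attention — rather than being a genuine obstacle — is the bookkeeping of ground sets: Proposition~\ref{dual_uni} is used twice, once over $[n]$ and once over $[n]\setminus S$, and one should record that $S$ being a face of $\mathcal{C}$ is exactly the hypothesis under which Proposition~\ref{linkdual} applies and $(\mathcal{C}^*\setminus S)^*$ equals $\link_S(\mathcal{C})$. Since all three cited propositions hold with no additional hypotheses, there is no substantive difficulty here; the corollary is a direct concatenation of Propositions~\ref{induced}, \ref{dual_uni}, and \ref{linkdual}.
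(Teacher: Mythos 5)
Your argument is correct and is precisely the paper's own proof: apply Proposition~\ref{dual_uni}, then Proposition~\ref{induced}, then Proposition~\ref{dual_uni} again, and identify the result as $\link_S(\mathcal{C})$ via Proposition~\ref{linkdual}. You have simply written out in full the chain that the paper states in one sentence.
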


\begin{proof}
    Proposition \ref{linkdual} implies that $\link_S( \mathcal{C} )$  can be obtained
    via Alexander duality and passing to an induced subcomplex.
	Unimodularity then follows from Propositions \ref{induced} and \ref{dual_uni}.
\end{proof}

Now we turn to operations for taking a complex that
is unimodular and constructing larger unimodular complexes.
If $\mathcal{C}$ has a vertex $v$ that lies in each facet of $\mathcal{C}$, we say that $v$ is a \emph{cone vertex} of $\mathcal{C}$.
The following proposition tells us that unimodularity is invariant under adding or removing a cone vertex.

\begin{prop}\label{cone}
	Let $\mathcal{C}$ be a simplicial complex on $[n]$.
	We define $\mathcal{C}'$ on $[n+1]$ to be the simplicial complex with the following facets:
	\[
		\{F \cup \{n+1\}: F \textnormal{ is a facet of } \mathcal{C}\}.
	\]
	Then $\mathcal{A}_\mathcal{C}$ is unimodular if and only if $\mathcal{A}_{\mathcal{C}'}$ is.
\end{prop}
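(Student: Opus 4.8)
The plan is to compare the matrices $\mathcal{A}_{\mathcal{C}}$ and $\mathcal{A}_{\mathcal{C}'}$ directly and show they have the same Graver basis, up to a relabeling of columns, after which unimodularity transfers by criterion (4) of Definition \ref{def:unimodular}. First I would set up coordinates: the columns of $\mathcal{A}_{\mathcal{C}'}$ are indexed by $\{1,2\}^{[n+1]}$, and I would split this index set according to the value $i_{n+1} \in \{1,2\}$ of the new coordinate, giving two blocks of columns, each naturally indexed by $\{1,2\}^{[n]}$. The key structural observation is that, because $n+1$ is a cone vertex, every row of $\mathcal{A}_{\mathcal{C}'}$ corresponds to a pair $(F \cup \{n+1\}, e)$ with $F$ a facet of $\mathcal{C}$ and $e \in \{1,2\}^{F \cup \{n+1\}}$; such a row has support entirely inside the block where $i_{n+1} = e_{n+1}$. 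Hence, after reordering rows, $\mathcal{A}_{\mathcal{C}'}$ is block diagonal with two diagonal blocks, and each diagonal block is exactly $\mathcal{A}_{\mathcal{C}}$ (the row $(F\cup\{n+1\}, e)$ restricted to the block $i_{n+1} = e_{n+1}$ picks out the columns with $(i_j : j \in F) = (e_j : j \in F)$, which is precisely the row $(F, e|_F)$ of $\mathcal{A}_{\mathcal{C}}$).

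Once the block-diagonal form $\mathcal{A}_{\mathcal{C}'} \cong \begin{pmatrix} \mathcal{A}_{\mathcal{C}} & 0 \\ 0 & \mathcal{A}_{\mathcal{C}} \end{pmatrix}$ is established (with two identical diagonal blocks and zero off-diagonal blocks), the statement follows from general facts about block-diagonal matrices. For the forward direction: a $d\times d$ submatrix of a block-diagonal matrix has determinant equal to the product of the determinants of its pieces in each block, so if all full-rank minors of $\mathcal{A}_{\mathcal{C}}$ lie in $\{0, \pm\lambda\}$, then all full-rank minors of $\mathcal{A}_{\mathcal{C}'}$ lie in $\{0, \pm \lambda^2\}$ — wait, that only handles the maximal minors that split evenly, so it is cleaner to argue via the circuit or Graver characterization instead. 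Using criterion (3) or (4): a kernel element $u$ of the block-diagonal matrix decomposes as $u = (u^{(1)}, u^{(2)})$ with each $u^{(i)} \in \ker_{\zz}\mathcal{A}_{\mathcal{C}}$, and $u$ is primitive (resp. a circuit) in $\ker_\zz \mathcal{A}_{\mathcal{C}'}$ if and only if one of $u^{(1)}, u^{(2)}$ is primitive (resp. a circuit) in $\ker_\zz \mathcal{A}_{\mathcal{C}}$ and the other is zero. Therefore the Graver basis of $\mathcal{A}_{\mathcal{C}'}$ consists of the Graver basis elements of $\mathcal{A}_{\mathcal{C}}$ placed in either block, padded with zeros. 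This makes both directions immediate: the Graver basis of $\mathcal{A}_{\mathcal{C}'}$ has entries in $\{0, \pm 1\}$ if and only if that of $\mathcal{A}_{\mathcal{C}}$ does. Alternatively, for the converse one may simply invoke Proposition \ref{induced}, since $\mathcal{C}$ is obtained from $\mathcal{C}'$ by deleting the vertex $n+1$; but the self-contained block-diagonal argument gives both directions at once.

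The only real obstacle is bookkeeping: making the identification of each diagonal block with $\mathcal{A}_{\mathcal{C}}$ precise, i.e., checking that the row indexed by $(F \cup \{n+1\}, e)$ in block $i_{n+1} = e_{n+1}$, after discarding the now-constant coordinate $n+1$, is literally the $(F, e|_F)$ row of $\mathcal{A}_{\mathcal{C}}$, and that this correspondence is a bijection between the rows of one diagonal block and the rows of $\mathcal{A}_{\mathcal{C}}$. This is routine from the explicit description of $\mathcal{A}_{\calc,\bfd}$ given after the running example. No genuine difficulty beyond that; Proposition \ref{obvious} parts (\ref{item:reorder}) and (\ref{item:subset}) license the column/row reorderings used along the way.
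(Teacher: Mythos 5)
Your proof is correct and follows essentially the same route as the paper: both establish the block-diagonal form $\mathcal{A}_{\mathcal{C}'} = \left(\begin{smallmatrix} \mathcal{A}_{\mathcal{C}} & 0 \\ 0 & \mathcal{A}_{\mathcal{C}} \end{smallmatrix}\right)$ by sorting columns on the coordinate $i_{n+1}$, then observe that the Graver basis of the block matrix is $\{(u,0),(0,u) : u \in \mathrm{Gr}(\mathcal{A}_{\mathcal{C}})\}$, so criterion (4) of Definition \ref{def:unimodular} transfers in both directions. The additional bookkeeping you flag and the alternative converse via Proposition \ref{induced} are fine but not needed beyond what the paper already gives.
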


\begin{proof}
	We will index the columns of $\mathcal{A}_{\mathcal{C}'}$ by the binary $n+1$ tuples
	such that those with the $n+1$ coordinate equal to 1 come before those with $n+1$ coordinate equal to $2$.
	Then we get the following block form:
	\[
		\mathcal{A}_{\mathcal{C}'} = \left(\begin{array}{c|c}
		\mathcal{A}_\mathcal{C} & {\bf 0} \\ \hline
		{\bf 0} & \mathcal{A}_\mathcal{C}
		\end{array}\right).
	\]
	The Graver basis of $\mathcal{A}_{\mathcal{C}'}$ is therefore $\{(u,0), (0,u) : u \in Gr_A\}$.
	Hence the Graver basis of $\mathcal{A}_{\calc'}$ consists of $0, \pm 1$ elements if and
	only if the Graver basis of $\mathcal{A}_\calc$ consists of $0, \pm 1$ elements.
\end{proof}

By induction, adding or removing multiple cone vertices from a simplicial complex does not affect unimodularity.
We introduce the following notation to denote this.

\begin{defn}\label{cone}
	Let $\mathcal{C}$ be a simplicial complex on vertex set $[n]$. Then we define $\cone^p(\mathcal{C})$
	to be the simplicial complex on $[n] \cup \{u_1,\dots,u_p\}$ with the following facets
	\[
		\{F \cup \{u_1,\dots,u_p\} : F \text{ is a facet of } \mathcal{C} \}.
	\]
	When $p = 0$, we define $\cone^p(\mathcal{C}) = \mathcal{C}$.
\end{defn}

\begin{defn}
	For any matrix $A \in \rr^{s \times t}$, we define the \emph{Lawrence lifting} of $A$ to be the matrix
	\[
		\Lambda(A) \quad =  \quad
			\begin{pmatrix}
				A & {\bf 0}\\
				{\bf 0} & A\\
				{\bf 1} & {\bf 1}
			\end{pmatrix}  \quad \in \quad   \rr^{ (2s + t)  \times 2t}
	\]
	where ${\bf 0}$ denotes the $s \times t$ matrix of all zeroes and 
	${\bf 1}$ denotes a $t \times t$ identity matrix.
\end{defn}

By Theorem 7.1 in \cite{sturmfels}, $\Lambda(A)$ is unimodular if and only if $A$ is unimodular.
This gives rise to another unimodularity-preserving operation on simplicial complexes.

\begin{defn}\label{def:lawrence}
	Let $\mathcal{C}$ be a simplicial complex on $[n]$.
	We define the \emph{Lawrence lifting} of $\mathcal{C}$
	to be the simplicial complex $\Lambda(\mathcal{C})$ on $[n+1]$ that has the following set of facets:
	\[
		\{ [n] \} \cup \{F \cup \{n+1\} : F \textnormal{ is a facet of } \mathcal{C}\}.
	\]
	In this case, we refer to the facet $[n]$ as a \emph{big facet}.
\end{defn}

If a complex $\mathcal{C}$ on $[n+1]$ has big facet $[n]$,
then $\mathcal{C} = \Lambda(\link_{n+1}(\mathcal{C}))$.
In particular, $\mathcal{C} = \Lambda(\mathcal{D})$ for some complex $\mathcal{D}$
if and only if $\mathcal{C}$ has a big facet.

\begin{prop}\label{lawrence}
	The simplicial complex $\mathcal{C}$ is unimodular if and only if $\Lambda(\mathcal{C})$ is unimodular.
\end{prop}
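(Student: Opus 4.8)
The plan is to reduce the claim to the already-cited fact (Theorem 7.1 in \cite{sturmfels}) that $\Lambda(A)$ is unimodular if and only if $A$ is unimodular, by showing that the design matrix $\mathcal{A}_{\Lambda(\mathcal{C})}$ has the same rowspace (or, more precisely, agrees up to the column/row operations of Proposition \ref{obvious}) as a matrix of Lawrence type $\Lambda(B)$ built from a matrix $B$ associated to $\mathcal{C}$. First I would write out $\mathcal{A}_{\Lambda(\mathcal{C})}$ explicitly. Index the columns of $\mathcal{A}_{\Lambda(\mathcal{C})}$ by binary $(n+1)$-tuples, ordering those whose last coordinate is $1$ before those whose last coordinate is $2$; this splits the columns into two blocks of size $2^n$, each naturally identified with $\{1,2\}^n$. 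The facets of $\Lambda(\mathcal{C})$ are the big facet $[n]$ together with the facets $F \cup \{n+1\}$ for $F \in \facet(\mathcal{C})$. The block of rows coming from the big facet $[n]$ records, for each $\bfi \in \{1,2\}^n$, the sum of the two columns $(\bfi,1)$ and $(\bfi,2)$; so this block has the form $(\,I_{2^n} \mid I_{2^n}\,)$. The block of rows coming from a facet $F \cup \{n+1\}$ with $n+1$-coordinate equal to $1$ is supported entirely in the first column block and there equals the corresponding block of rows of $\mathcal{A}_{\mathcal{C}}$; likewise the rows with $n+1$-coordinate equal to $2$ are supported in the second block and equal the corresponding rows of $\mathcal{A}_{\mathcal{C}}$.

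Next I would identify the resulting block structure. Writing $A := \mathcal{A}_{\mathcal{C}}$ for brevity, the computation above shows
\[
	\mathcal{A}_{\Lambda(\mathcal{C})} \;=\;
	\begin{pmatrix}
		A & {\bf 0}\\
		{\bf 0} & A\\
		I_{2^n} & I_{2^n}
	\end{pmatrix},
\]
after a suitable reordering of rows so that the big-facet rows appear last and the facet blocks of $\Lambda(\mathcal{C})$ are interleaved to match the two copies of $A$. But this is exactly $\Lambda(A)$ with $s$ the number of rows of $A$ and $t = 2^n$ its number of columns. Hence by Theorem 7.1 in \cite{sturmfels}, $\mathcal{A}_{\Lambda(\mathcal{C})} = \Lambda(\mathcal{A}_{\mathcal{C}})$ is unimodular if and only if $\mathcal{A}_{\mathcal{C}}$ is, which is the assertion.

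The one point requiring care — and the main (minor) obstacle — is the bookkeeping that the big-facet block of $\mathcal{A}_{\Lambda(\mathcal{C})}$ really is $(\,I \mid I\,)$ rather than some other matrix recording a coarser marginal: this is where it matters that the big facet is all of $[n]$, so that its marginal cells are indexed by all of $\{1,2\}^n$ and each such cell receives exactly the two table entries differing only in coordinate $n+1$. I should also note that any extra rows of $\mathcal{A}_{\Lambda(\mathcal{C})}$ coming from the fact that some $F \cup \{n+1\}$ might be contained in $[n]$ as a subset do not occur, since $[n] \notin \{F \cup \{n+1\}\}$; in any case Proposition \ref{obvious}(\ref{item:rowspace}) lets me discard redundant marginal rows without affecting unimodularity. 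With those observations in place the identification with $\Lambda(A)$ is immediate and the proposition follows. $\qedhere$
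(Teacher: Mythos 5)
Your proof is correct and takes essentially the same route as the paper: the paper's argument simply observes that $\Lambda(\mathcal{A}_{\mathcal{C}}) = \mathcal{A}_{\Lambda(\mathcal{C})}$ and then invokes Theorem 7.1 of \cite{sturmfels}. You have merely spelled out the row/column bookkeeping behind that identity in more detail.
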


\begin{proof}
	Note that by construction $\Lambda(\mathcal{A}_{\mathcal{C}}) = \mathcal{A}_{\Lambda(\mathcal{C})}$.
	Hence Theorem 7.1 in \cite{sturmfels} implies the proposition.
\end{proof}

If a simplicial complex $\mathcal{C}$ can be expressed as $\Lambda(\mathcal{C'})$,
then we say that $\mathcal{C}$ is of Lawrence type.
A simplicial complex on $n$ vertices is of Lawrence type if and only if it has a facet containing $n-1$ vertices.  We will refer to any facet of $\calc$ that has $n-1$ elements as a \emph{big facet}.

We now give our final unimodularity preserving operation.

\begin{defn}
	Let $\mathcal{C}$ be a simplicial complex on ground set $[n]$.
	Let $G^p(\mathcal{C})$ denote the same simplicial complex but on ground set $[n+p]$.
	Note that the vertices $n+1,\dots,n+p$ are not contained in any face of $G^p(\mathcal{C})$.
	In this case we say that $n+1,\dots,n+p$ are \emph{ghost vertices}.
	When $p = 1$ we drop the superscript; i.e. we just write $G(\mathcal{C})$.
\end{defn}

\begin{prop}\label{ghost}
	A simplicial complex $\mathcal{C}$ is unimodular if and only if $G^p(\mathcal{C})$ is unimodular.
\end{prop}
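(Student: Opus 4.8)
The plan is to follow the template of Proposition~\ref{cone}: first determine the block structure of $\mathcal{A}_{G^p(\mathcal{C})}$ in terms of $\mathcal{A}_{\mathcal{C}}$, and then read off unimodularity from the equivalent conditions in Definition~\ref{def:unimodular}. Unwinding the construction, the facets of $G^p(\mathcal{C})$ are exactly the facets of $\mathcal{C}$, none of which contains a ghost vertex $n+1,\dots,n+p$; hence the rows of $\mathcal{A}_{G^p(\mathcal{C})}$ are indexed by the same set of pairs $(F,e)$ as the rows of $\mathcal{A}_{\mathcal{C}}$, while the columns are now indexed by $\{1,2\}^{n+p}$. Since the entry in row $(F,e)$ and column $\bfi$ equals $1$ exactly when $e = \bfi|_F$ and $F \subseteq [n]$, this entry is independent of the coordinates $i_{n+1},\dots,i_{n+p}$. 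Grouping columns by the value of $(i_{n+1},\dots,i_{n+p})$ then yields
\[
	\mathcal{A}_{G^p(\mathcal{C})} = \bigl(\, \mathcal{A}_{\mathcal{C}} \mid \mathcal{A}_{\mathcal{C}} \mid \cdots \mid \mathcal{A}_{\mathcal{C}} \,\bigr)
\]
with $2^p$ side-by-side copies of $\mathcal{A}_{\mathcal{C}}$; this is the ghost-vertex analogue of the block-diagonal form appearing in the proof of Proposition~\ref{cone}. (Equivalently, since $G^p$ is the $p$-fold composite of $G$, it would suffice to treat $p=1$ and induct.)

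The ``only if'' direction is then immediate from part~(\ref{item:subset}) of Proposition~\ref{obvious}, as $\mathcal{A}_{\mathcal{C}}$ is a column subset of $\mathcal{A}_{G^p(\mathcal{C})}$. For the ``if'' direction I would use condition~(2) of Definition~\ref{def:unimodular}. Duplicating columns does not change the rank, so $\rank \mathcal{A}_{G^p(\mathcal{C})} = \rank \mathcal{A}_{\mathcal{C}} =: d$. Take any $d \times d$ submatrix $B$ of $\mathcal{A}_{G^p(\mathcal{C})}$: each of its columns is, as a vector, some column of $\mathcal{A}_{\mathcal{C}}$ restricted to the chosen rows. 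If two columns of $B$ are copies of the same column of $\mathcal{A}_{\mathcal{C}}$, then $B$ has a repeated column and $\det B = 0$; otherwise $B$ is honestly a $d\times d$ submatrix of $\mathcal{A}_{\mathcal{C}}$, so $\det B \in \{0,\pm\lambda\}$ by unimodularity of $\mathcal{A}_{\mathcal{C}}$. In every case $\det B \in \{0,\pm\lambda\}$, so $\mathcal{A}_{G^p(\mathcal{C})}$ is unimodular.

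There is no genuinely difficult step here; the only things requiring care are pinning down the column-duplication block form and observing that the rank is preserved, so that $d\times d$ is the correct maximal-minor size for both matrices. One could instead argue via Graver bases as in Proposition~\ref{cone}, but the primitive vectors of a matrix of shape $(A\mid\cdots\mid A)$ are slightly more delicate to enumerate than in the block-diagonal cone case, so the determinant formulation is the cleaner route.
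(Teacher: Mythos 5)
Your proposal is correct and takes essentially the same approach as the paper: the paper's entire proof is the single observation that $\mathcal{A}_{G^p(\mathcal{C})} = (\mathcal{A}_{\mathcal{C}} \mid \cdots \mid \mathcal{A}_{\mathcal{C}})$, leaving the deduction implicit. Your determinant-based argument (repeated columns give zero minors, distinct columns reduce to a minor of $\mathcal{A}_{\mathcal{C}}$) is a clean and correct way to fill in that elided step, and your remark that duplicating columns preserves rank is exactly the point one must check.
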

\begin{proof}
	This is true because
	\[
		\mathcal{A}_{G^p(\mathcal{C})} = \begin{pmatrix}\mathcal{A}_{\mathcal{C}} & \dots & \mathcal{A}_{\mathcal{C}}\end{pmatrix}. \qedhere
	\]
\end{proof}

Note that adding a ghost vertex to a complex is Alexander dual to
taking the Lawrence lifting of a simplicial complex, that is
$G(\calc)^{*}  =  \Lambda(\calc^{*})$. 

We now state a useful fact about the interaction of these operations.
\begin{prop}\label{commutes}
	The operation $\cone^p(\cdot)$ commutes with the operations of taking links, duals, induced sub-complexes and adding ghost vertices.
\end{prop}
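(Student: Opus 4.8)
The plan is to prove each commutation statement separately, since they are all of the same flavor: both sides are simplicial complexes, so it suffices to check that they have the same faces (or equivalently the same facets). In each case I would unwind the definition of $\cone^p$ from Definition \ref{cone} — namely $\cone^p(\mathcal{D})$ has facets $\{F \cup \{u_1,\dots,u_p\} : F \in \facet(\mathcal{D})\}$ — together with the definition of the other operation, and verify the two descriptions coincide. Throughout I would use the observation that a set is a face of $\cone^p(\mathcal{D})$ if and only if it has the form $G$ or $G \cup T$ where $G \in \mathcal{D}$ and $T \subseteq \{u_1,\dots,u_p\}$; more precisely $F \in \cone^p(\mathcal{D})$ iff $F \setminus \{u_1,\dots,u_p\} \in \mathcal{D}$.

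First, for induced subcomplexes: if $W \subseteq [n]$ and we also keep all the cone vertices, then restricting $\cone^p(\mathcal{C})$ to $W \cup \{u_1,\dots,u_p\}$ yields exactly $\cone^p$ of the restriction of $\mathcal{C}$ to $W$, because a face $F$ with $F \cap [n] \subseteq W$ lies in $\cone^p(\mathcal{C})$ iff $F \cap [n] \in \mathcal{C}|_W$. (One must state the convention that when we take an induced subcomplex we retain the cone vertices — deleting a cone vertex is a separate matter and genuinely changes things, so the precise statement is that $\cone^p(\cdot)$ commutes with deletion of vertices in $[n]$.) Second, for links: if $v \in [n]$ is a vertex of $\mathcal{C}$, then $\link_v(\cone^p(\mathcal{C}))$ has facets $\{(F \cup \{u_1,\dots,u_p\}) \setminus \{v\} : v \in F\} = \{(F\setminus\{v\}) \cup \{u_1,\dots,u_p\} : F \in \facet(\mathcal{C}), v \in F\}$, which is exactly $\facet(\cone^p(\link_v(\mathcal{C})))$; iterating over $v \in S$ handles $\link_S$. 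One should note the cone vertices $u_i$ lie in every facet of $\cone^p(\mathcal{C})$, so they survive in the link and become the new cone vertices. Third, for ghost vertices: adding $q$ ghost vertices to $\cone^p(\mathcal{C})$ gives a complex on $[n]\cup\{u_1,\dots,u_p\}\cup\{\text{ghosts}\}$ with the same facets, and $\cone^p(G^q(\mathcal{C}))$ is the same complex, since the ghost vertices of $\mathcal{C}$ simply stay outside every face after coning.

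The case of Alexander duals is the one I expect to require the most care, and it is cleanest to deduce from the earlier results rather than compute directly. Using Proposition \ref{linkdual} in the form $\link_v(\mathcal{C}) = (\mathcal{C}^* \setminus v)^*$, one gets, for a single cone vertex, that $\cone(\mathcal{D})$ is Alexander dual to $G(\mathcal{D}^*)$: indeed $\cone(\mathcal{D})$ on $[n+1]$ has $\{n+1\}$ as a cone vertex and big facet structure making $\link_{n+1}(\cone(\mathcal{D})) = \mathcal{D}$, so dualizing and comparing minimal non-faces shows $\cone(\mathcal{D})^* = G(\mathcal{D}^*)$ up to relabeling. Granting this, $\cone^p(\mathcal{C})^* = G^p(\mathcal{C}^*)$ by induction, and then commutation with duality follows: applying $*$ twice, $(\cone^p(\mathcal{C}^*))^* = G^p(\mathcal{C})$ — wait, one must be careful about which statement is wanted. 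The clean statement is simply that the diagram "$\cone^p$ then $*$" agrees with "$*$ then $G^p$", and dually; alternatively, since $*$ is an involution and $\cone^p$ and $G^p$ are intertwined by it, any sentence asserting $\cone^p$ commutes with $*$ should be read in that intertwined sense. The main obstacle is bookkeeping: keeping straight the relabeling of ground sets (the cone/ghost vertices get indices $n+1,\dots,n+p$) and being precise that "commutes with taking induced subcomplexes" means deletion of original vertices, not of the added cone vertices. Once the conventions are pinned down, each verification is a short face-by-face check.
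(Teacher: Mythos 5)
The paper states Proposition \ref{commutes} without proof, so there is no paper argument to compare against; I will just assess correctness. Your treatment of links, induced subcomplexes (with the correct caveat that ``induced'' refers to deleting vertices from the original ground set, not the added cone vertices), and ghost vertices is sound: in each case the face-by-face check via the criterion $F \in \cone^p(\mathcal{D}) \iff F \setminus \{u_1,\dots,u_p\} \in \mathcal{D}$ goes through.

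The Alexander-duality case, however, contains a genuine error. You claim $\cone(\mathcal{D})^* = G(\mathcal{D}^*)$, but the correct identity is $\cone(\mathcal{D})^* = \cone(\mathcal{D}^*)$, i.e.\ $\cone$ really does commute with $*$ in the naive sense, and no ``intertwined'' reading is needed. The derivation you gave — that $\cone(\mathcal{C})^* \setminus \{n+1\} = \mathcal{C}^*$, via Proposition \ref{linkdual} applied to $\link_{n+1}(\cone(\mathcal{C})) = \mathcal{C}$ — is correct, but it does not follow that $n+1$ is a ghost vertex in $\cone(\mathcal{C})^*$. In fact $n+1$ is a \emph{cone} vertex there: since $\{n+1\}$ is a face of $\cone(\mathcal{C})$, no minimal non-face of $\cone(\mathcal{C})$ contains $n+1$, so these minimal non-faces are exactly those of $\mathcal{C}$, and every facet $[n+1] \setminus S = ([n]\setminus S) \cup \{n+1\}$ of $\cone(\mathcal{C})^*$ contains $n+1$. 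Concretely, take $\mathcal{C}$ to be two isolated vertices $\{1\},\{2\}$; then $\mathcal{C}^* = \{\emptyset\}$, $\cone(\mathcal{C})$ is a path with middle vertex $3$, and $\cone(\mathcal{C})^* = \{\emptyset,\{3\}\} = \cone(\mathcal{C}^*)$, whereas $G(\mathcal{C}^*) = \{\emptyset\}$ has $\{3\}$ as a non-face. You have conflated $\cone$ with $\Lambda$: it is the Lawrence lift that is dual to adding a ghost vertex, $\Lambda(\mathcal{C})^* = G(\mathcal{C}^*)$ (equivalently $G(\mathcal{C})^* = \Lambda(\mathcal{C}^*)$, as the paper records just before Proposition \ref{commutes}), and the extra big facet $[n]$ present in $\Lambda(\mathcal{C})$ but absent from $\cone(\mathcal{C})$ is precisely what accounts for the difference.
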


For any vertex $v$ of $\mathcal{C}$, we let $\mathcal{C}\setminus v$ denote the induced subcomplex on $\mathcal{C}\setminus\{v\}$.
Then for vertices $v\neq u$ of $\mathcal{C}$, the operations $\cdot\setminus v$ and $\link_u(\cdot)$ commute.
So if $\mathcal{D}$ can be obtained from $\mathcal{C}$ by applying a series of deletions, and taking links,
then we can write
\[
	\mathcal{D} = \link_R(\mathcal{C}\setminus S)
\]
where $F \in \mathcal{C}$ and $S,R$ are a subset of vertices of $\mathcal{C}$ such that $S \cap R = \emptyset$.
This gives rise to the following definition:

\begin{defn}\label{def:minor}
	Let $\mathcal{C}$ be a simplicial complex. Then $\mathcal{D}$ is a \emph{minor} of $\mathcal{C}$
	if $\mathcal{D} = \link_R(\mathcal{C}\setminus S)$ where
	and $S,R$ are subsets of vertices of $\mathcal{C}$ such that $S \cap R = \emptyset$,
	and $R$ is a face of $\calc$.
\end{defn}

{ We warn the reader that although it is natural to view simplicial complexes as a generalization of graphs,
our definition of simplicial complex minor does \emph{not} generalize the usual notion of graph minor.
Alternatively, a simplicial complex can be seen as a generalization of a matroid,
and our definition of minor is a generalization of the usual notion of 
a matroid minor \cite{kaiser2009}.
This definition of simplicial complex minor is useful for our 
purposes because of Proposition \ref{minor}.
}
\begin{prop}\label{minor}
	If $\mathcal{C}$ is a unimodular simplicial complex, then every minor of $\mathcal{C}$ is unimodular.
\end{prop}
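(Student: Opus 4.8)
The plan is to unwind the definition of a minor and reduce it to two unimodularity-preserving operations that were already established earlier in this section. By Definition \ref{def:minor}, a minor of $\mathcal{C}$ has the form $\mathcal{D} = \link_R(\mathcal{C}\setminus S)$ where $S$ and $R$ are disjoint vertex subsets and $R$ is a face of $\mathcal{C}$. So the whole statement should follow by applying, in order, Proposition \ref{induced} (induced subcomplexes of unimodular complexes are unimodular) and then Corollary \ref{link} (links of faces of unimodular complexes are unimodular).

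Concretely, the steps I would carry out are: First, note that $\mathcal{C}\setminus S$ is by definition the induced subcomplex of $\mathcal{C}$ on the vertex set $[n]\setminus S$, so Proposition \ref{induced} gives that $\mathcal{C}\setminus S$ is unimodular. Second, observe that since $R$ is a face of $\mathcal{C}$ and $R\cap S=\emptyset$, every vertex of $R$ lies in $[n]\setminus S$, and hence $R$ is a face of the induced subcomplex $\mathcal{C}\setminus S$. Third, apply Corollary \ref{link} to the unimodular complex $\mathcal{C}\setminus S$ and its face $R$ to conclude that $\mathcal{D} = \link_R(\mathcal{C}\setminus S)$ is unimodular. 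That completes the argument.

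There is essentially no serious obstacle here, since all the real work was done in Propositions \ref{induced}, \ref{dual_uni}, \ref{linkdual}, and Corollary \ref{link}; the only point that requires a sentence of care is the verification in the second step that $R$ survives as a face after the deletion, so that Corollary \ref{link} actually applies. (Implicitly, this also uses the earlier remark that deletions and links commute, which is what lets us assume the minor is presented in the normal form $\link_R(\mathcal{C}\setminus S)$ rather than as an arbitrary interleaving of the two operations.) If one wanted to be fully self-contained, one could alternatively prove the statement directly by a single induction on the number of deletion/link operations, using Proposition \ref{induced} for a deletion step and Corollary \ref{link} for a link step, but the two-line reduction above is cleaner.
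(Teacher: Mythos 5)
Your proposal is correct and takes essentially the same approach as the paper, which simply cites Proposition \ref{induced} and Corollary \ref{link}; you have just spelled out the routine verification (that $R$ remains a face of $\mathcal{C}\setminus S$) that the paper leaves implicit.
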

\begin{proof}
	This follows immediately from Propositions \ref{induced} and \ref{link}.
\end{proof}

The fundamental example of a unimodular simplicial complex is the disjoint union of two simplices.
Here a simplex $\Delta_n$ is the simplicial complex on an $n+1$ element set with
a single facet consisting of all the elements.
We denote the irrelevant complex $\{\emptyset\}$ by $\Delta_{-1}$ and the void complex $\{\}$ by $\Delta_{-2}$.

\begin{prop}\label{disjointsimplices}
	Let $\mathcal{C} = \Delta_m \sqcup \Delta_n$ be the disjoint union of two simplices, for
	$m, n \geq 0$.
	Then $\mathcal{C}$ is unimodular.
\end{prop}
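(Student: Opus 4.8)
The plan is to recognize $\mathcal{A}_{\mathcal{C}}$ for $\mathcal{C}=\Delta_m\sqcup\Delta_n$ as the vertex--edge incidence matrix of a complete bipartite graph and then invoke the classical total unimodularity of such matrices. Take the ground set of $\mathcal{C}$ to be $[m+n+2]$, so that $\facet(\mathcal{C})=\{F_1,F_2\}$ with $F_1=\{1,\dots,m+1\}$ and $F_2=\{m+2,\dots,m+n+2\}$. Unwinding the definition of $\mathcal{A}_{\mathcal{C}}=\mathcal{A}_{\mathcal{C},{\bf 2}}$: its rows form two blocks, one indexed by $\{0,1\}^{F_1}$ and one by $\{0,1\}^{F_2}$; its columns are indexed by pairs $(\bfa,\bfb)\in\{0,1\}^{F_1}\times\{0,1\}^{F_2}$; and the column $(\bfa,\bfb)$ has exactly one $1$ in the first block, in row $\bfa$, and exactly one $1$ in the second block, in row $\bfb$. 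Thus $\mathcal{A}_{\mathcal{C}}$ is precisely the vertex--edge incidence matrix of the complete bipartite graph $K_{2^{m+1},\,2^{n+1}}$ on vertex set $\{0,1\}^{F_1}\sqcup\{0,1\}^{F_2}$; equivalently, $\mathcal{A}_{\mathcal{C}}$ is the design matrix of the independence model for a $2^{m+1}\times 2^{n+1}$ contingency table.

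With this identification I would finish using part (3) of Definition \ref{def:unimodular}. For a vector $v\in\ker_{\zz}\mathcal{A}_{\mathcal{C}}$ and a vertex $w$ of $K_{2^{m+1},2^{n+1}}$, the corresponding row of $\mathcal{A}_{\mathcal{C}}$ reads $\sum_{e\ni w}v_e=0$. Hence if $\supp(v)$ spanned a forest, a leaf of that forest would have a single incident edge $e\in\supp(v)$, forcing $v_e=0$, a contradiction; so $\supp(v)$ contains the edge set of a cycle, necessarily of even length since the graph is bipartite. The $\pm 1$ vector obtained by alternating signs around such an even cycle lies in $\ker_{\zz}\mathcal{A}_{\mathcal{C}}$ with support contained in $\supp(v)$, so if $v$ is a circuit, minimality of support forces $\supp(v)$ to be exactly that cycle; since the incidence matrix of a single even cycle has one-dimensional kernel and circuit entries are relatively prime, $v$ is $\pm$ the alternating vector. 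Thus every circuit of $\mathcal{A}_{\mathcal{C}}$ has entries in $\{0,\pm 1\}$ and $\mathcal{A}_{\mathcal{C}}$ is unimodular. Alternatively, one may simply cite that incidence matrices of bipartite graphs are totally unimodular (\cite{Schrijver1986}), which gives part (2) with $\lambda=1$, or note that the Graver basis of the two-way independence model consists of the even-cycle moves, all with entries in $\{0,\pm 1\}$, which is part (4).

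I do not anticipate a genuine obstacle. The only points requiring care are the bookkeeping that turns the marginal-map description of $\mathcal{A}_{\mathcal{C}}$ into a bipartite incidence matrix, and the observation that $\mathcal{A}_{\mathcal{C}}$ is not of full row rank --- $K_{2^{m+1},2^{n+1}}$ is connected, so $\rank\mathcal{A}_{\mathcal{C}}=2^{m+1}+2^{n+1}-1$ --- so that an application of part (2) of Definition \ref{def:unimodular} must use $d=\rank\mathcal{A}_{\mathcal{C}}$ rather than the number of rows. The smallest allowed instances $m=0$ or $n=0$ (one or both simplices a single vertex) are covered by the identical argument, the corresponding side of $K_{2^{m+1},2^{n+1}}$ then having two vertices.
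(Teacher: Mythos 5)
Your proof is correct and takes essentially the same approach as the paper: both identify $\mathcal{A}_{\mathcal{C}}$ as the vertex--edge incidence matrix of the complete bipartite graph $K_{2^{m+1},2^{n+1}}$ and conclude from the total unimodularity of bipartite incidence matrices, which the paper cites from \cite[Ch.~19]{Schrijver1986} while you additionally supply the direct circuit argument behind that classical fact.
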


\begin{proof}
The matrix $\mathcal{A}_\calc$ in this case is the vertex-edge incidence matrix
of a complete bipartite graph with $2^{m+1}$ and $2^{n+1}$ vertices in the two
parts of the partition.  Such  vertex-edge incidence matrices are
examples of network matrices and are hence totally unimodular
\cite[Ch.~19]{Schrijver1986}.
\end{proof}

A second family of fundamental examples comes from taking the duals of
the disjoint union of two simplices.
We use $D_{m,n}$ to denote the dual of a disjoint union of an $m$-simplex 
and an $n$-simplex, i.e.
\[
	D_{m,n} := (\Delta_m \sqcup \Delta_n)^*.
\]
We close this section by giving a workable description of $D_{m,n}$.
We can divide the vertices of $D_{m,n}$ into disjoint sets $M,N$ such that
$M$ contains the vertices of the $\Delta_m$ in $D_{m,n}^*$ and $N$ contains the vertices of the $\Delta_n$ in $D_{m,n}^*$.
Then, the facets of $D_{m,n}$ are precisely the subsets of $M \sqcup N$ that leave out exactly one element of
$M$ and one element of $N$.
Notice that the complexes induced on $M$, $N$ are $\partial\Delta_m$ and $\partial\Delta_n$ respectively.
Also, note that for any $v \in M$, $\link_{v}(D_{m,n}) = D_{m-1,n}$ and for any $v \in N$, $\link_{v}(D_{m,n}) = D_{m,n-1}$.


\section{$\beta$-avoiding Simplicial Complexes}\label{sec:bavoid}

Part of our main result is a forbidden minor classification of unimodular simplicial complexes.
In this section, we identify these forbidden minors and prove various properties about the complexes that avoid them.

\begin{prop}\label{prop:badcomplexes}
The following simplicial complexes are minimal nonunimodular simplicial complexes:
	\begin{enumerate}
		\item $P_4$, the path on $4$ vertices
		\item $O_6$, the boundary of the octahedron or its dual $O_6^*$
		\item $J_1$, the complex on $\{1,2,3,4,5\}$ with facets $12,15,234,345$ or its 
		dual $J_1^*$, with facets $134, 235, 245$.
		\item $J_2$, complex on $\{1,2,3,4,5\}$ with facets $12,235,34,145$
		\item For $n \ge 1$, $\partial\Delta_n \sqcup \{v\}$, the disjoint union of the boundary of an $n$-simplex and a single vertex.
	\end{enumerate}
	
			\begin{figure}[h!]
			\scalebox{0.75}{\begin{tikzpicture}
				\vertex (a) at (0,0){};
				\vertex (b) at (1,0){};
				\vertex (c) at (2,0){};
				\vertex (d) at (3,0){};
				\path
					(a) edge (b)
					(b) edge (c)
					(c) edge (d)
				;
			\end{tikzpicture}}
			\\
			\scalebox{0.75}{\begin{tikzpicture}
				\vertex (a) at (0,0){};
				\vertex (b) at (-1,1){};
				\vertex (c) at (1,1){};
				\vertex (d) at (0,2){};
				\vertex (e) at (0,3){};
			\path
				(a) edge (b)
				(a) edge (c)
				(b) edge (c)
				(b) edge (d)
				(c) edge (d)
				(b) edge (e)
				(c) edge (e)
				(d) edge (e)
			;
				\draw[fill=gray] (a.center) -- (b.center) -- (c.center) -- cycle;
				\draw[fill=gray] (d.center) -- (b.center) -- (e.center) -- cycle;
				\draw[fill=gray] (d.center) -- (c.center) -- (e.center) -- cycle;
				\draw[fill=black] (e.center) circle(3pt);
				\draw[fill=black] (a.center) circle(3pt);
				\draw[fill=black] (b.center) circle(3pt);
				\draw[fill=black] (c.center) circle(3pt);
				\draw[fill=black] (d.center) circle(3pt);			
			\end{tikzpicture}}
			\qquad
			\scalebox{0.75}{\begin{tikzpicture}
				\vertex (a) at (0,0){};
				\vertex (b) at (1,1){};
				\vertex (c) at (2,0){};
				\vertex (d) at (1,-1){};
				\vertex (e) at (1,0){};
				\path
					(a) edge (b)
					(b) edge (c)
					(c) edge (d)
					(d) edge (a)
					(a) edge (e)
					(b) edge (e)
					(c) edge (e)
				;
				\draw[fill=gray] (a.center) -- (b.center) -- (e.center) -- cycle;
				\draw[fill=gray] (c.center) -- (b.center) -- (e.center) -- cycle;
				\draw[fill=black] (e.center) circle(3pt);
				\draw[fill=black] (a.center) circle(3pt);
				\draw[fill=black] (b.center) circle(3pt);
				\draw[fill=black] (c.center) circle(3pt);
			\end{tikzpicture}}
			\qquad
			\scalebox{0.75}{\begin{tikzpicture}
					\vertex (a) at (0,0){};
					\vertex (b) at (1,1){};
					\vertex (c) at (2,0){};
					\vertex (d) at (1,-1){};
					\vertex (e) at (1,0){};
					\path
						(a) edge (b)
						(b) edge (c)
						(c) edge (d)
						(d) edge (a)
						(a) edge (e)
						(b) edge (e)
						(c) edge (e)
						(d) edge (e)
					;
					\draw[fill=gray] (a.center) -- (b.center) -- (e.center) -- cycle;
					\draw[fill=gray] (c.center) -- (d.center) -- (e.center) -- cycle;
					\draw[fill=black] (e.center) circle(3pt);
					\draw[fill=black] (a.center) circle(3pt);
					\draw[fill=black] (b.center) circle(3pt);
					\draw[fill=black] (c.center) circle(3pt);
					\draw[fill=black] (d.center) circle(3pt);
				\end{tikzpicture}}
				\caption{The complexes $P_4, J_1, J_1^*,$ and $J_2$.}\label{fig:badcomplex}
		\end{figure}
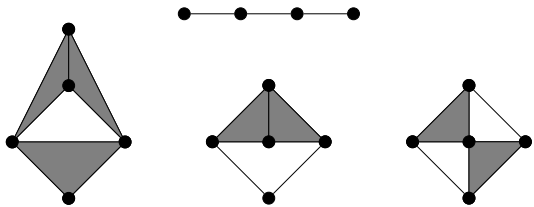
\end{prop}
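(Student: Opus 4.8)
The plan is, for each complex $\calc$ in the list, to verify two things: that $\mathcal{A}_\calc$ is not unimodular, and that $\mathcal{A}_\mathcal{D}$ is unimodular for every proper minor $\mathcal{D}$ of $\calc$. The complexes appearing as Alexander duals, $O_6^*$ and $J_1^*$, need not be treated separately: non-unimodularity transfers from $O_6$ and $J_1$ by Proposition \ref{dual_uni}; and since $\calc^*\setminus S = (\link_S(\calc))^*$ by Proposition \ref{linkdual} and deletions commute with links, every proper minor of $\calc^*$ is the Alexander dual of a proper minor of $\calc$, so minimality for $O_6$, $J_1$ gives minimality for $O_6^*$, $J_1^*$ by another application of Proposition \ref{dual_uni}.

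For non-unimodularity, in the four remaining sporadic cases ($P_4$, $O_6$, $J_1$, $J_2$) I would exhibit, for each, a circuit of $\mathcal{A}_\calc$ with an entry of absolute value $2$; since each of these complexes has at most six vertices, this is a bounded and routine computation. For the family $\calc_n := \partial\Delta_n\sqcup\{v\}$, $n\ge 1$, the argument is uniform and rests on the fact that $\ker\mathcal{A}_{\partial\Delta_n}$ is \emph{one}-dimensional --- indeed $\dim\ker\mathcal{A}_{\partial\Delta_n} = 2^{n+1}-\#\partial\Delta_n = 1$, as in the proof of Proposition \ref{spanningset} --- spanned by the fully supported alternating vector $p$ with $p_\bfc = (-1)^{\|\bfc\|_1}$. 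Index the columns of $\mathcal{A}_{\calc_n}$ by pairs $(\bfc,s)$, $\bfc\in\{0,1\}^{[n+1]}$ and $s\in\{0,1\}$ the coordinate of the isolated vertex $v$. I would then check that the vector $x$ with $x_{(\mathbf{0},0)} = 2$, $x_{(\mathbf{0},1)} = -1$, $x_{(\bfe_1,0)} = x_{(\bfe_2,0)} = -1$, $x_{(\bfc,1)} = (-1)^{\|\bfc\|_1}$ for every remaining $\bfc\notin\{\mathbf{0},\bfe_1,\bfe_2\}$, and $x = 0$ on all other columns, is a circuit. It lies in $\ker\mathcal{A}_{\calc_n}$: the two $\{v\}$-marginals vanish by direct summation, and each marginal on a facet $[n+1]\setminus\{i\}$ of $\partial\Delta_n$ equals the corresponding marginal of $p$, because $x_{(\bfc,0)}+x_{(\bfc,1)} = p_\bfc$ for every $\bfc$, hence is zero. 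Its support is minimal: if $0\ne y\in\ker\mathcal{A}_{\calc_n}$ with $\supp(y)\subseteq\supp(x)$, then writing $y = (y^{(0)},y^{(1)})$ for its two slices, the facet marginals of $\partial\Delta_n$ force $y^{(0)}+y^{(1)}\in\ker\mathcal{A}_{\partial\Delta_n} = \rr p$, the $\{v\}$-marginal forces $\sum_\bfc y^{(0)}_\bfc = 0$, and --- with $y^{(0)}$ supported on $\{\mathbf{0},\bfe_1,\bfe_2\}$ and $y^{(1)}$ off $\{\bfe_1,\bfe_2\}$ --- these constraints pin $y$ down to an integer multiple of $x$. Since $x$ has an entry $2$, $\mathcal{A}_{\calc_n}$ is not unimodular.

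For minimality, in the sporadic cases each complex has few vertices, so I would enumerate its proper minors $\link_R(\calc\setminus S)$; after discarding cone vertices (Proposition \ref{cone}) each one reduces to a disjoint union of at most two simplices, or the Alexander dual of such --- for instance every proper link or vertex-deletion of $O_6$ is $C_4 = (\Delta_1\sqcup\Delta_1)^*$, two disjoint points, a single simplex, the irrelevant complex, or a cone over one of these --- all unimodular by Propositions \ref{disjointsimplices}, \ref{dual_uni}, and \ref{cone}. For the family, I would show that every proper minor of $\partial\Delta_n\sqcup\{v\}$ lies in $\{\partial\Delta_m,\ \Delta_m,\ \Delta_m\sqcup\{v\},\ \Delta_{-1}\}$: deleting $v$ yields $\partial\Delta_n$, deleting any other vertex yields $\Delta_{n-1}\sqcup\{v\}$, $\link_v$ yields $\Delta_{-1}$, and $\link_i$ for $i\ne v$ yields $\partial\Delta_{n-1}$; iterating minors stays in this list (a nonempty link destroys the isolated vertex, an induced subcomplex of $\partial\Delta_n$ on a proper subset is a full simplex, and links of $\partial\Delta_m$ and of $\Delta_m$ are again of these forms). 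Each of these complexes is unimodular: $\mathcal{A}_{\Delta_m}$ is an identity matrix; $\ker\mathcal{A}_{\partial\Delta_m} = \rr p$ is one-dimensional with $\pm 1$ generator, so its Graver basis is $\{\pm p\}$; $\Delta_m\sqcup\{v\} = \Delta_m\sqcup\Delta_0$ is unimodular by Proposition \ref{disjointsimplices}; and $\Delta_{-1}$ trivially.

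The step requiring the most care is the support-minimality claim for $\calc_n$ --- ruling out any smaller-support kernel vector inside $\supp(x)$ --- which is exactly where one-dimensionality of $\ker\mathcal{A}_{\partial\Delta_n}$ is used; secondary bookkeeping is needed to confirm that the recursive description of the minors of $\partial\Delta_n\sqcup\{v\}$ is exhaustive and that the sporadic minor enumerations are complete. The sporadic non-unimodularity certificates themselves are routine once found.
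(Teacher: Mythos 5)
Your proof is correct, but for the infinite family it takes a genuinely different route from the paper. Where the paper cites an external reference for the existence of a non-squarefree Graver element of $\mathcal{A}_{\partial\Delta_n\sqcup\{v\}}$, you construct one explicitly. Your candidate vector $x$ is indeed in the kernel (I verified the $\{v\}$-marginal computation: the $s=0$ slice sums to $2-1-1=0$ and the $s=1$ slice to $-1+(0-1+1+1)=0$, and $x^{(0)}_\bfc + x^{(1)}_\bfc = p_\bfc$ so the $\partial\Delta_n$-marginals vanish), and your support-minimality argument is sound: any $y$ with $\supp(y)\subseteq\supp(x)$ has $y^{(0)}+y^{(1)}=\lambda p$ by one-dimensionality of $\ker\mathcal{A}_{\partial\Delta_n}$, and the support restrictions together with $\sum y^{(0)}_\bfc=0$ force $y^{(0)}_{\bfe_1}=y^{(0)}_{\bfe_2}=-\lambda$, $y^{(0)}_{\mathbf{0}}=2\lambda$, $y^{(1)}_{\mathbf{0}}=-\lambda$, and $y^{(1)}_\bfc=\lambda(-1)^{\|\bfc\|_1}$ elsewhere, i.e.\ $y=\lambda x$. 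Since the nonzero entries of $x$ have gcd $1$ and include a $2$, this is a circuit violating Definition \ref{def:unimodular}(3). This makes the proof self-contained, which is a real improvement over the paper's version. Your duality-transfer observation (that minimality for $O_6, J_1$ implies minimality for $O_6^*, J_1^*$ via Propositions \ref{dual_uni}, \ref{linkdual}, and \ref{bdual}) is also not spelled out in the paper but cleanly halves the sporadic work, and your structural enumeration of minors of $\partial\Delta_n\sqcup\{v\}$ (reducing, modulo ghost and cone vertices, to $\{\partial\Delta_m, \Delta_m, \Delta_m\sqcup\Delta_0, \Delta_{-1}\}$) makes the minimality claim for the family transparent where the paper simply asserts it. The one place your proposal is no more explicit than the paper is the four remaining sporadic non-unimodularity certificates, which you defer to ``bounded routine computation''; for $O_6$ in particular (64 columns, rank 27), exhibiting a circuit with a $\pm 2$ entry by hand is not quite routine --- the paper itself resorts to finding one via random column subsets in 4ti2 --- so if this were to be written up in full, that case would want an explicit certificate displayed.
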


Note that $P_4,J_2$ and $\partial\Delta_n \sqcup \{v\}$ are isomorphic 
to their own duals.

\begin{proof}
We can check that the complexes $P_4, O_6, O_6^*, J_1, J_1^*,$ and $J_2$ are not unimodular
by using the software 4ti2 \cite{4ti2} to compute the Graver basis and looking for entries that
are not $0, \pm 1$
In the case of $O_6$ and $O_6*$, these are too large to compute the
entire Graver basis.  However, selecting sufficiently large random subsets of the
columns produced Graver basis elements of the desired form.
{Computations are available on our website \cite{BernsteinWeb2016}.}
For the infinite family $\partial\Delta_n \sqcup \{v\}$ where $n \ge 1$,
examples of a non-squarefree Graver basis element  appear in \cite{bdypt}.
These results show that these examples are not unimodular.
To see that they are minimal, note that every subcomplex obtained by deleting
a single vertex or taking the link at a vertex produced a unimodular complex in
all cases.
\end{proof}

\begin{defn}
	A simplicial complex $\mathcal{C}$ is \emph{$\beta$-avoiding} if it does not contain any 
	of the complexes from Proposition \ref{prop:badcomplexes} as minors.
\end{defn}

Since none of the complexes from Proposition \ref{prop:badcomplexes} is
unimodular, Proposition \ref{minor} implies
that if $\mathcal{C}$ is unimodular, then $\mathcal{C}$ is $\beta$-avoiding.
The converse of this is our forbidden minor classification of unimodular complexes
which we prove in Theorem \ref{main}.

We now give some technical results about $\beta$-avoiding complexes.
Before beginning, we remind the reader that 
$\partial\Delta_1 \sqcup \{v\}$ is an independent set on $3$ vertices,
and so no $\beta$-avoiding complex can have an independent set of size $3$. 

\begin{prop}\label{bdual}
	Let $\mathcal{C}$ be a $\beta$-avoiding simplicial complex.
	Then $\mathcal{C}^*$ is also $\beta$-avoiding.
\end{prop}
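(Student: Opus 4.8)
The plan is to exploit the self-duality structure already present in the list of forbidden minors from Proposition \ref{prop:badcomplexes}, together with the fact that Alexander duality interacts well with the minor operation. First I would observe that the list of forbidden minors is closed under Alexander duality: among the complexes listed, $P_4$, $J_2$, and each $\partial\Delta_n \sqcup \{v\}$ are isomorphic to their own duals (as the excerpt notes), while the pairs $(O_6, O_6^*)$ and $(J_1, J_1^*)$ are dual to one another and both members of each pair appear in the list. So the set $\beta$ of forbidden complexes satisfies $\beta^* = \{B^* : B \in \beta\} = \beta$ up to isomorphism.

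Next I would establish the key compatibility: if $\mathcal{D}$ is a minor of $\mathcal{C}$, then $\mathcal{D}^*$ is a minor of $\mathcal{C}^*$. By Definition \ref{def:minor}, $\mathcal{D} = \link_R(\mathcal{C}\setminus S)$ for disjoint vertex subsets $R, S$ with $R$ a face of $\mathcal{C}\setminus S$. I would like to rewrite this using Proposition \ref{linkdual}, which says $\link_R(\mathcal{E}) = (\mathcal{E}^*\setminus R)^*$ for any complex $\mathcal{E}$ and face $R$. Applying this with $\mathcal{E} = \mathcal{C}\setminus S$ gives $\mathcal{D} = \big((\mathcal{C}\setminus S)^* \setminus R\big)^*$, hence $\mathcal{D}^* = (\mathcal{C}\setminus S)^* \setminus R$. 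Now I need to understand $(\mathcal{C}\setminus S)^*$ in terms of $\mathcal{C}^*$: deleting the vertex set $S$ is the induced-subcomplex operation, and its Alexander dual should be a link. Indeed, Proposition \ref{linkdual} applied in the other direction (with $\mathcal{C}$ replaced by $\mathcal{C}^*$ and the face being $S$, once one checks $S$ is a face of $\mathcal{C}^*$, which holds because $S$ is disjoint from a nonempty face $R$ of $\mathcal{C}$ --- or one handles degenerate cases separately) gives $\link_S(\mathcal{C}^*) = (\mathcal{C}^{**}\setminus S)^* = (\mathcal{C}\setminus S)^*$, using $\mathcal{C}^{**} = \mathcal{C}$. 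Combining, $\mathcal{D}^* = \link_S(\mathcal{C}^*) \setminus R = \link_S(\mathcal{C}^* \setminus R)$, where the last equality uses that deletion and link at disjoint vertices commute (noted in the discussion before Definition \ref{def:minor}). This exhibits $\mathcal{D}^*$ as a minor of $\mathcal{C}^*$.

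Finally I would assemble the contrapositive. Suppose $\mathcal{C}^*$ is not $\beta$-avoiding; then $\mathcal{C}^*$ has some $B \in \beta$ as a minor. By the compatibility just established, $B^*$ is a minor of $\mathcal{C}^{**} = \mathcal{C}$. But $B^* \in \beta$ (up to isomorphism) since $\beta^* = \beta$, so $\mathcal{C}$ is not $\beta$-avoiding either. This proves the contrapositive, hence the proposition.

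The main obstacle I anticipate is the careful bookkeeping around degenerate cases in the minor/duality correspondence: one must be sure that $S$ is genuinely a face of $\mathcal{C}^*$ so that $\link_S(\mathcal{C}^*)$ is defined, handle the boundary situations where $R$ or $S$ is empty, and keep straight which ground set each intermediate complex lives on (Alexander duals are taken relative to the current ground set, so deleting vertices before vs. after dualizing must be reconciled). Getting Proposition \ref{linkdual} to apply cleanly in the "reverse" direction --- recognizing vertex deletion as an instance of the $(\mathcal{C}^* \setminus S)^*$ pattern --- is the one genuinely delicate point; everything else is formal.
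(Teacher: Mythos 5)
Your proof is correct and takes essentially the same approach as the paper: both arguments rest on the two observations that the list of forbidden minors is closed under Alexander duality and that taking a minor commutes with dualizing, the latter established via Proposition \ref{linkdual} and the commutativity of deletion with links at disjoint vertices. (The paper phrases the compatibility lemma in the reverse direction --- starting from a minor of $\mathcal{C}^*$ and producing a minor of $\mathcal{C}$ --- but this is equivalent to what you prove.) One small caveat: your parenthetical justification that ``$S$ is a face of $\mathcal{C}^*$ because $S$ is disjoint from a nonempty face $R$ of $\mathcal{C}$'' is not a valid inference, since $S \in \mathcal{C}^*$ is equivalent to $[n]\setminus S \notin \mathcal{C}$, and disjointness from $R$ gives no information about that. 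Your fallback of handling degenerate cases separately is the right instinct: when $S \notin \mathcal{C}^*$, both $\link_S(\mathcal{C}^*)$ and $(\mathcal{C}\setminus S)^*$ are the void complex, so the identity still holds, and the resulting $\mathcal{D}$ is a full simplex, which is not on the forbidden list, so the contrapositive is unaffected.
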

\begin{proof}
	The list of prohibited minors of $\beta$-avoiding complexes is closed under taking duals.
	The proposition follows when we note that if $\mathcal{D}$ is a minor of $\mathcal{C}$,
	then $\mathcal{D}^*$ is a minor of $\mathcal{C}^*$.
	This is true because if $\mathcal{D}^*= \link_S(\mathcal{C}^*\setminus R)$,
	then two applications of Proposition \ref{linkdual} give
	\begin{align*}
		\mathcal{D}^* &= ((\mathcal{C}^*\setminus R)^*\setminus S)^*
		\\&= (\link_R(\mathcal{C})\setminus S)^*
	\end{align*}
	and therefore $\mathcal{D} = \link_R(\mathcal{C})\setminus S$.
	So $\mathcal{D}$ is a minor of $\mathcal{C}$.
\end{proof}

\begin{prop}\label{square}
	Let $\mathcal{C}$ be a $\beta$-avoiding simplicial complex that has $C_4$ induced.
	Then the complex induced on the non-ghost vertices of $\mathcal{C}$ is $\cone^p(C_4)$
	for some $p$.
\end{prop}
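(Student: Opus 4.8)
The plan is to prove the statement by induction on the number $m$ of non-ghost vertices of $\mathcal{C}$. Let $\mathcal{D}$ be the induced subcomplex of $\mathcal{C}$ on its non-ghost vertices: it is a minor of $\mathcal{C}$, hence $\beta$-avoiding, and it has no ghost vertices (a face of $\mathcal{C}$ witnessing that a vertex is non-ghost contains only non-ghost vertices, so it survives in $\mathcal{D}$). Since $\mathcal{C}$ has an induced $C_4$, so does $\mathcal{D}$; fix it on $\{1,2,3,4\}$ with edge-cycle $1-2-3-4-1$, so that $\{1,3\}$ and $\{2,4\}$ are its only two-element non-faces. I will prove the following, by induction on $m$: \emph{a $\beta$-avoiding complex with $m$ vertices, all non-ghost, and an induced $C_4$ equals $\cone^{m-4}(C_4)$.} The base case $m=4$ is immediate, since then $\mathcal{D}=\mathcal{D}|_{\{1,2,3,4\}}=C_4=\cone^0(C_4)$; so assume $m\ge 5$, and in particular there is a vertex outside $\{1,2,3,4\}$.

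The heart of the argument is a local analysis around the $C_4$. Let $v$ be any vertex outside $\{1,2,3,4\}$. (i) \emph{$v$ is adjacent to all of $1,2,3,4$}: since no $\beta$-avoiding complex has an independent set of size three, $v$ is adjacent to at least one of $\{1,3\}$ and at least one of $\{2,4\}$; if $v$ missed a $C_4$-vertex, relabel (via an automorphism of $C_4$) so that $v\not\sim 1$, whence $v\sim 3$; if $v$ also misses $2$ or $4$ then the induced subcomplex on $\{1,2,3,v\}$ or on $\{1,2,4,v\}$ is the path $P_4$; and if $v\sim 2,3,4$, then on $\{1,2,3,4,v\}$ the only possible $2$-faces are $\{2,3,v\}$ and $\{3,4,v\}$, and a check of the four cases shows that if $\{2,3,v\}$ (resp.\ $\{3,4,v\}$) is not a face then the link of $2$ (resp.\ of $4$) is an independent triple on $\{1,3,v\}$, while if both are faces the complex is isomorphic to $J_1$ --- forbidden in every case. (ii) \emph{For every $C_4$-edge $\{i,j\}$, the triangle $\{i,j,v\}$ is a face of $\mathcal{D}$}: otherwise, running through the induced subcomplexes on $\{i,j,k,v\}$ for the other two $C_4$-vertices $k$ forces two specified triangles at $v$ to be faces, and then, according to whether the remaining potential triangle at $v$ is a face, the induced subcomplex on $\{1,2,3,4,v\}$ has $P_4$ as a vertex-link or is isomorphic to $J_2$ --- forbidden in every case. (iii) \emph{Any two vertices $v,v'$ outside $\{1,2,3,4\}$ are adjacent}: if not, the two-element non-faces among $\{1,2,3,4,v,v'\}$ are exactly $\{1,3\},\{2,4\},\{v,v'\}$, by (i) and (ii) all eight ``transversal'' triangles $\{a,b,c\}$ with $a\in\{1,3\}$, $b\in\{2,4\}$, $c\in\{v,v'\}$ are faces, no four of those six vertices span a face, and so the induced subcomplex on them is exactly the octahedral boundary $O_6$ --- forbidden.

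Now fix $v$ outside $\{1,2,3,4\}$. By (i) and (iii), $v$ is adjacent to every other vertex of $\mathcal{D}$, so $\link_v(\mathcal{D})$ has vertex set $V(\mathcal{D})\setminus\{v\}$ (thus $m-1\ge 4$ vertices, all non-ghost), is $\beta$-avoiding, and --- by (ii), together with the fact that every three of $1,2,3,4$ include a non-face --- has $\{1,2,3,4\}$ inducing a $C_4$ inside it. By the inductive hypothesis $\link_v(\mathcal{D})=\cone^{m-5}(C_4)$, so its facets are $\{i,j\}\cup W$ for the four $C_4$-edges $\{i,j\}$, where $W:=V(\mathcal{D})\setminus\{1,2,3,4,v\}$. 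Hence $\{i,j\}\cup W\cup\{v\}$ is a face of $\mathcal{D}$ for each $C_4$-edge $\{i,j\}$. These four sets are pairwise incomparable; and any face $Y$ of $\mathcal{D}$ satisfies $Y\cap\{1,2,3,4\}\subseteq\{i,j\}$ for some $C_4$-edge (because $\mathcal{D}|_{\{1,2,3,4\}}=C_4$) and $Y\setminus\{1,2,3,4\}\subseteq W\cup\{v\}$ automatically, so $Y\subseteq\{i,j\}\cup W\cup\{v\}$. Therefore these four sets are precisely the facets of $\mathcal{D}$, i.e.\ $\mathcal{D}=\cone^{m-4}(C_4)$, completing the induction.

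I expect the main obstacle to be the local analysis in the second paragraph, especially part (ii): although the list of forbidden minors does all the work, one must be careful because several small configurations that arise en route --- a triangle with a pendant edge, an edge together with an isolated vertex, $K_4$ minus an edge, and so on --- are themselves $\beta$-avoiding, so in several cases the forbidden minor only appears after passing to a suitable link or deletion, and bookkeeping these cases (especially the self-dual-looking ones that turn out to be copies of $J_1$ or $J_2$) takes some care. Once (i)--(iii) are in place, the reduction via $\link_v(\mathcal{D})$ and the induction are routine.
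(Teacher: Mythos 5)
Your proof is correct and follows essentially the same strategy as the paper's: strip ghost vertices, show via the forbidden minors $P_4$, $J_1$, $J_2$, $O_6$ and the independent triple that every vertex outside the fixed $C_4$ is a cone point over it and that any two such vertices are adjacent, then finish by induction through $\link_v$. The main cosmetic difference is that the paper establishes the five-vertex conclusion by counting how many of the four triangles at $v$ are present, and in the final step compares $\link_{v}(\calc)$ with $\calc\setminus v$, whereas you split the local analysis into claims (i)--(iii) and determine the facets of $\calc$ directly from $\link_v$ alone; both routes are valid.
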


\begin{proof}
	Assume $\mathcal{C}$ has no ghost vertices.
	Let $u_1,\dots,u_4$ be the vertices that induce $C_4$ in $\mathcal{C}$.
	If these are the only vertices, we are done, so let $v$ be another vertex in $\mathcal{C}$.
	Let $\mathcal{D}$ denote the complex induced on $v,u_1,\dots,u_4$.
	Any minor of $\mathcal{D}$ is a minor of $\mathcal{C}$, so $\mathcal{D}$ must also be $\beta$-avoiding.
	Vertex $v$ cannot be disconnected from $u_1,\dots,u_4$ since otherwise $\mathcal{D}$ has an independent set of size $3$.
	Furthermore, $v$ must connect to $u_1,\dots,u_4$ with a triangle.
	Otherwise if $v$ connected to $u_i$ with an edge, then $u_i$ has edge degree 3
	and so $\link_v(\mathcal{D})$ is an independent set on $3$ vertices.
	There are $4$ possible triangles that $v$ can join with $u_1,\dots,u_4$ in $\mathcal{D}$
	so that $u_1,\dots,u_4$ induce $C_4$.
	If $v$ is only in one such triangle, wlog $\{v,u_1,u_2\}$,
	then $\{v,u_1,u_3,u_4\}$ is an induced $P_4$.
	The two non-isomorphic complexes on $5$ vertices that have two triangles and an induced $C_4$ are $J_1$ and $J_2$.
	If $\mathcal{D}$ has $3$ of the possible triangles, then $\link_v(\mathcal{D})$ is $P_4$.
	So $\mathcal{D}$ must have all four - i.e. it must be $\cone^1(C_4)$.

	Any vertices $v,v' \notin \{u_1, u_2, u_3, u_4\}$ must connect to each other.
	Otherwise, the induced complex on $v,v',u_1,\dots,u_4$ is $O_6$ which is 
	not unimodular.
	Now we denote the vertices of $\mathcal{C}$ that are not any of the $u_i$s as $v_1,\dots,v_k$.
	Since $v_k$ is a cone point over the $C_4$ on $u_1,\dots,u_4$ and since $v_k$ connects to all $v_j, j<k$,
	$\link_{v_k}(\calc)$ is a $\beta$-avoiding complex on $v_1,\dots,v_{k-1},u_1,\dots,u_4$ that has $C_4$ induced on $u_1,\dots,u_4$.
	So, by induction on the number of vertices, $\link_{v_k}(\calc)$ is an iterated cone over $C_4$, $\cone^{k-1}(C_4)$.
	But also the induced complex on $v_1,\dots,v_{k-1},u_1,\dots,u_4$ is 
	$\cone^{k-1}(C_4)$ by induction.  This implies that $\calc = \cone^k(C_4)$.
\end{proof}

\begin{prop}\label{iteratedcone}
	Let $\mathcal{C}$ be a $\beta$-avoiding complex that has $D_{m,n}$ induced for some $m,n \ge 1$.
	Then the complex induced on the non-ghost vertices of $\mathcal{C}$ is $\cone^p(D_{m,n})$
	for some $p$.
\end{prop}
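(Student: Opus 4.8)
The plan is to mimic the structure of the proof of Proposition~\ref{square} as closely as possible, replacing the role of $C_4$ there by $D_{m,n}$ and the role of "independent set of size $3$" and the sporadic complexes $J_1, J_2, O_6$ by the appropriate forbidden minors. First I would reduce to the case where $\mathcal{C}$ has no ghost vertices, since ghost vertices are literally excluded in the statement. Then fix vertices inducing a copy of $D_{m,n}$; partition them into the "$M$-part" and "$N$-part" as in the workable description of $D_{m,n}$ at the end of Section~\ref{sec:basic}, and recall from there that $\link_v(D_{m,n}) = D_{m-1,n}$ for $v \in M$ and $= D_{m,n-1}$ for $v\in N$, and that the induced subcomplexes on $M$, $N$ are $\partial\Delta_m$, $\partial\Delta_n$.

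The core of the argument is the one-extra-vertex analysis: let $v$ be a vertex outside the chosen $D_{m,n}$, and let $\mathcal{D}$ be the complex induced on $v$ together with the $D_{m,n}$-vertices; $\mathcal{D}$ is again $\beta$-avoiding since its minors are minors of $\mathcal{C}$. I want to show $\mathcal{D} = \cone^1(D_{m,n})$, i.e.\ $v$ is a cone vertex over the copy of $D_{m,n}$. The key tool is that $\link_v(\mathcal{D})$ must itself be $\beta$-avoiding, and $\mathcal{D}\setminus w$ for any vertex $w$ of the $D_{m,n}$ must be $\beta$-avoiding: if $v$ were \emph{not} a cone vertex, then either it fails to lie in some facet $F = (M\cup N)\setminus\{a,b\}$ of $D_{m,n}$, or the face structure at $v$ is otherwise deficient, and I would argue that in each such case either $\link_v(\mathcal{D})$ or an induced subcomplex of $\mathcal{D}$ contains one of the forbidden minors --- most naturally $\partial\Delta_k \sqcup \{\text{pt}\}$ or $D_{1,1} \cong C_4$-type configurations or (for small $m,n$) $P_4$, $J_1$, $J_2$, $O_6$. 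Concretely: $\link_v(\mathcal{D})$ restricted to $M$ (or to $N$) must be all of $\partial\Delta_m$ (resp.\ $\partial\Delta_n$), else we get $\partial\Delta_{m-1}\sqcup\{v'\}$ or similar; and the interaction of the $M$- and $N$-links of $v$ must again have the $D$-type cross structure, else an $O_6$ or $J_i$ appears. This one-vertex lemma is the step I expect to be the main obstacle --- it requires a careful case analysis on how $v$ can attach to $\partial\Delta_m$, $\partial\Delta_n$, and to the "cross" facets, and pinning down exactly which forbidden minor is exhibited in each bad case (and checking the boundary cases $m=1$ or $n=1$, where $\partial\Delta_1$ is two disjoint points and the relevant obstruction is $P_4$, $J_1$, or $J_2$ rather than $O_6$) is the delicate part.

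Granting the one-vertex lemma, I would next show any two vertices $v, v'$ outside the $D_{m,n}$ must be joined: otherwise the induced complex on $v, v'$ and a suitable sub-$D_{1,1}$ (four vertices, two from $M$ and two from $N$) of the $D_{m,n}$ is $O_6$, which is not unimodular --- exactly as in Proposition~\ref{square}. More precisely, since $v$ and $v'$ are both cone vertices over $D_{m,n}$ and the complex $D_{m,n}$ contains an induced $D_{1,1}$, nonadjacency of $v$ and $v'$ would produce the octahedron $O_6 = D_{1,1}*\{v,v'\}^{\mathrm{c}}$ as an induced subcomplex (or something from which $O_6$ is a minor via a link).

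Finally I would finish by induction on the number of vertices exactly as in Proposition~\ref{square}: list the non-$D_{m,n}$ vertices as $v_1,\dots,v_k$; since $v_k$ is a cone vertex over the $D_{m,n}$ and is adjacent to every $v_j$ with $j<k$, the complex $\link_{v_k}(\mathcal{C})$ is $\beta$-avoiding, has no ghost vertices, and still has $D_{m,n}$ induced, so by the inductive hypothesis $\link_{v_k}(\mathcal{C}) = \cone^{k-1}(D_{m,n})$; likewise the induced subcomplex on $v_1,\dots,v_{k-1}$ together with the $D_{m,n}$-vertices is $\cone^{k-1}(D_{m,n})$ by induction; combining these two facts forces $\mathcal{C} = \cone^k(D_{m,n})$, which is what we wanted since $\mathcal{C}$ was assumed to have no ghost vertices. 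I would use Proposition~\ref{commutes} implicitly to know that links and induced subcomplexes of iterated cones are iterated cones, and Proposition~\ref{minor} throughout to know that $\beta$-avoidance passes to minors.
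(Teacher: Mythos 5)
Your overall skeleton matches the paper's: a one-vertex lemma (an external vertex must cone over the $D_{m,n}$), a pairwise-connectedness step via $O_6$, and a closing induction on the number of external vertices using $\link_{v_k}(\mathcal{C}) = \mathcal{C}\setminus v_k = \cone^{k-1}(D_{m,n})$. The second and third parts you sketch are essentially what the paper does.

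The genuine gap is the one-vertex lemma itself, which you correctly flag as ``the main obstacle'' but never actually prove. You propose to close it by a direct case analysis on how $v$ can attach to $\partial\Delta_m$, $\partial\Delta_n$, and the cross facets, exhibiting a forbidden minor in each bad configuration. That is not what the paper does, and I would caution that such a case analysis is substantially harder than it looks (the number of attachment patterns grows with $m$ and $n$, and the relevant forbidden minor can live several links deep). The paper instead runs a \emph{second induction on $m$ and $n$}, with base case $D_{1,1}=C_4$ handled by Proposition~\ref{square}. Assuming WLOG $m\ge 2$: $v$ must be adjacent to some $u\in M$ to avoid inducing $\partial\Delta_m\sqcup\{v\}$; then $\link_u(\mathcal{C})$ contains $v$ together with an induced $D_{m-1,n}$, and by the $(m-1,n)$ inductive hypothesis $v$ must cone over $D_{m-1,n}$ inside $\link_u(\mathcal{C})$, which forces $v$ to lie in every facet of the one-extra-vertex complex containing $u$. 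Since $m\ge 2$, $u$ is adjacent to every other vertex of $D_{m,n}$, so $v$ is as well; repeating the link argument for each $u'\in M$ and noting that every facet of $D_{m,n}$ meets $M$ then gives the one-vertex lemma with no case analysis at all. You should replace your proposed ad hoc case check with this induction; without it, the proof is incomplete.
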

\begin{proof}
	Assume $\mathcal{C}$ has no ghost vertices.
	Just as in the remarks at the end of section \ref{sec:basic},
	we divide the vertices of $D_{m,n}$ into disjoint sets $M,N$ such that
	$M$ contains the vertices of the $\Delta_m$ in $D_{m,n}^*$ and $N$ contains the vertices of the $\Delta_n$ in $D_{m,n}^*$.
	Now, we proceed by induction on $m$ and $n$.
	Note that $D_{1,1} = C_4$, so the base case is handled by Proposition \ref{square}.
	Assume that, without loss of generality, $m \geq 2$.
	
	Let $v \in \mathcal{C} \setminus D_{m,n}$.
	The vertex $v$ must connect to some $u \in M$ to avoid inducing $\partial\Delta_m \sqcup \{v\}$.
	Then, $\link_u(\calc)$ contains $v$ and $D_{m-1,n}$, and so by induction,
	the complex induced on $v$ and the vertices of $D_{m-1,n}$ in $\link_u(\calc)$ is a cone over $D_{m-1,n}$.
	This means that $v$ is in every facet that contains $u$.
	Since $m \geq 2$,
	$u$ is connected to every other vertex in $D_{m,n}$ and thus
	 $v$ is attached to every vertex in $D_{m,n}$.
	Now we apply the same argument to each of the vertices in the set $M$
	to see that $v$ is in every facet that contains any vertex in set $M$.
	Every facet of $D_{m,n}$ contains some element of $M$.
	So this implies that the induced complex on $v$ and
	$D_{m,n}$ must be the cone over $D_{m,n}$.

	Now assume $v,v'$ are both vertices in $\mathcal{C} \setminus D_{m,n}$.
	If they were not connected by an edge, then the induced complex on $v, v'$ and $D_{m,n}$
	contains a minor which is isomorphic to $O_6$.  This means that
	$v$, $v'$ are connected by an edge.  Taking the link $\link_v(\calc)$ produces
	a smaller complex with an induced $D_{m,n}$ hence it must be a cone
	by induction on the number of vertices not in the $D_{m,n}$.
	This reduces us the case where $\link_v(\calc)$ is $\cone^{p-1}(D_{m,n})$
	and $\calc \setminus v$ is $\cone^{p-1}(D_{m,n})$ which implies that
	$\calc$ is $\cone^p(D_{m,n})$. 
\end{proof}

\begin{prop}\label{ksimplex}
	Let $\mathcal{C}$ be a $\beta$-avoiding simplicial complex on vertices $u_1,\dots,u_{k+1},v$ such
	that the complex induced on $\{u_1,\dots,u_{k+1}\}$ is $\partial\Delta_{k}$.
	Then $v$ must be in a $k$-simplex with some subset of the $u_i$'s.
\end{prop}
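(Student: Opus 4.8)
The plan is to argue by contradiction: suppose $v$ is not contained in any $k$-simplex with a subset of the $u_i$'s, and produce a forbidden minor. First I would record the hypothesis concretely. The induced complex on $\{u_1,\dots,u_{k+1}\}$ being $\partial\Delta_k$ means every proper subset of $\{u_1,\dots,u_{k+1}\}$ is a face, but the whole set is not; in particular all $k$ of the subsets of size $k$ are facets of the induced complex. Since $\mathcal{C}$ is $\beta$-avoiding it has no independent set of size $3$, so $v$ cannot be disconnected from $\{u_1,\dots,u_{k+1}\}$; hence $v$ lies in at least one edge $\{v,u_i\}$. Let $T$ be a maximal face of $\mathcal{C}$ of the form $\{v\}\cup U$ with $U\subseteq\{u_1,\dots,u_{k+1}\}$, and set $j=\#U$. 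The assumption that $v$ is in no $k$-simplex with the $u_i$'s says precisely that $j\le k-1$, i.e. $U$ omits at least two of the $u_i$'s, say $u_a,u_b\notin U$.

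The heart of the argument is to look at $\link_v(\mathcal{C})$ restricted to $\{u_1,\dots,u_{k+1}\}$, which I will call $\mathcal{L}$. By the previous paragraph $\mathcal{L}$ is a subcomplex of $\partial\Delta_k$ whose facets all have size at most $k-1$ (that is what maximality of $T$ gives: no face $\{v\}\cup U'$ with $\#U'=k$), and $\mathcal{L}$ is nonempty since $\{v,u_i\}$ is an edge for some $i$. I want to exhibit an induced $P_4$ inside $\{v\}\cup\{u_1,\dots,u_{k+1}\}$, which contradicts $\beta$-avoidance. Take the maximal face $U$ as above with $u_a,u_b\notin U$; since $j\ge 1$ pick $u_c\in U$. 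Then consider the four vertices $u_a, u_c, v, u_b$: the edges $\{u_a,u_c\}$ and $\{u_c,u_b\}$ and $\{u_a,u_b\}$ all lie in $\partial\Delta_k$ (they are faces, as $k\ge 2$ so $\partial\Delta_k$ contains all edges), the edge $\{u_c,v\}$ lies in $T$, and I must check $\{u_a,v\}$ and $\{u_b,v\}$ are \emph{not} edges — this is where I need maximality of $T$ to be used more carefully, since a priori $v$ could be joined to $u_a$ by an edge lying in a different maximal face. The fix is to choose $T$ not just maximal but to choose the vertex pair cleverly: among all maximal faces $\{v\}\cup U$ pick one and then, among the omitted vertices, one can show (using that every size-$k$ subset of the $u_i$ is a facet of $\partial\Delta_k$ but no such $\{v\}\cup U'$ is a face of $\mathcal{C}$) that two omitted vertices $u_a,u_b$ can be chosen with neither adjacent to $v$; otherwise every omitted vertex is adjacent to $v$, and combining an omitted-but-adjacent vertex with $U$ contradicts maximality of $T$ unless there is exactly one omitted vertex, i.e. $j=k$, contrary to assumption. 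So at most one omitted vertex is adjacent to $v$, leaving at least one omitted vertex $u_a$ non-adjacent to $v$; if there were a second omitted non-adjacent vertex $u_b$ we are done via the $P_4$ on $u_a,u_c,v,u_b$, and if instead only one omitted vertex $u_a$ is non-adjacent to $v$ while the others are all adjacent, one reruns the maximality argument on the enlarged face to again force $j=k$, a contradiction.

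The main obstacle I anticipate is exactly this bookkeeping with maximal faces versus individual edges: $v$ may be adjacent to many $u_i$'s through several different maximal faces, so "$v$ is in no $k$-simplex with the $u_i$'s" does not immediately hand us two $u$'s non-adjacent to $v$. I expect the clean way to handle it is to induct on $k$ using Corollary \ref{link}: if $v$ is adjacent to some $u_i$, pass to $\link_{u_i}(\mathcal{C})$, whose induced complex on $\{u_1,\dots,u_{k+1}\}\setminus\{u_i\}$ is $\partial\Delta_{k-1}$ (since $\partial\Delta_k$ has $u_i$ as a cone-free vertex whose link in $\partial\Delta_k$ is $\partial\Delta_{k-1}$); $\link_{u_i}(\mathcal{C})$ is again $\beta$-avoiding by Proposition \ref{minor}, and $v$ survives in it, so by induction $v$ is in a $(k-1)$-simplex with a subset of $\{u_1,\dots,u_{k+1}\}\setminus\{u_i\}$, which together with $u_i$ gives the desired $k$-simplex in $\mathcal{C}$. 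The base case $k=2$ is a short finite check: $\{u_1,u_2,u_3\}$ induces a triangle-boundary $C_3$ (the $3$-cycle), $v$ must be adjacent to some $u_i$, and if $v$ were in no $2$-simplex $\{v,u_i,u_j\}$ then $v$ has edge-degree $1$ into the triangle, whence $\link_{u_i}(\mathcal{C})$ on the remaining three vertices is an independent set of size $3$ — forbidden. This inductive route sidesteps the delicate $P_4$ extraction and is the approach I would actually carry out.
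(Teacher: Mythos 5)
Your final inductive argument is essentially the paper's proof: induct on $k$, note that $v$ must be adjacent to some $u_i$ (because otherwise the induced complex on $\{u_1,\dots,u_{k+1},v\}$ is the forbidden minor $\partial\Delta_k\sqcup\{v\}$ --- not merely an independent set of size three, as you suggest in your opening paragraph, which only works for $k=1$), pass to $\link_{u_i}(\mathcal{C})$ where the remaining $u_j$'s induce $\partial\Delta_{k-1}$, apply the induction hypothesis, and reattach $u_i$. The paper runs the base case at $k=1$ directly from the minor $\partial\Delta_1\sqcup\{v\}$, which is shorter than your $k=2$ check; and in that $k=2$ check your assertion that $v$ must have edge-degree exactly $1$ into the triangle is not quite right, though also unnecessary, since taking the link of any $u_i$ adjacent to $v$ produces the forbidden independent triple regardless of $v$'s edge-degree.
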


\begin{proof}
	We proceed by induction on $k$.
	For the base case $k=1$, note that $\partial\Delta_1$ is two isolated points.
	In this case, $v$ must connect to $u_1$ or $u_2$
	as a $1$-simplex (an edge) to avoid inducing $\partial\Delta_1 \sqcup \{v\}$.
	\\
	\indent
	Now assume $k>1$.
	The vertex $v$ must attach to some $u_i$ to avoid inducing $\partial\Delta_k \sqcup \{v\}$.
	Then, $\link(u_i)$ has $v$ and $\partial\Delta_{k-1}$,
	so by induction, $v$ must form a $k-1$ simplex with some collection $u_1,\dots,\hat{u_i},\dots,\hat{u_j},\dots,u_{k+1}$.
	So in $\mathcal{C}$, $v$ must be in a $k$ simplex with $u_1,\dots,\hat{u_j},\dots,u_{k+1}$.
\end{proof}


\begin{prop}\label{minimality}
	Let $\mathcal{C}$ be a $\beta$-avoiding simplicial complex on $m+n+2$ vertices that does not have a facet of dimension $m+n$.
	Assume $D_{m,n} \subseteq \mathcal{C}$.
	Then $\mathcal{C} = D_{m,n}$.
\end{prop}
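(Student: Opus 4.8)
The plan is to assume, for contradiction, that $\mathcal{C} \supsetneq D_{m,n}$ on the vertex set $V$ with $\#V = m+n+2$, so that $\mathcal{C}$ and $D_{m,n}$ share their entire vertex set but $\mathcal{C}$ has strictly more faces, while $\mathcal{C}$ has no facet of dimension $m+n$ (equivalently, no facet equal to all of $V$). Recall from the end of Section~\ref{sec:basic} that the vertices of $D_{m,n}$ partition as $M \sqcup N$ with $\#M = m+1$, $\#N = n+1$, and the facets of $D_{m,n}$ are exactly the sets $V \setminus \{a,b\}$ with $a \in M$, $b \in N$; in particular $\partial\Delta_m$ is induced on $M$ and $\partial\Delta_n$ on $N$. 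Since $\mathcal{C} \supsetneq D_{m,n}$, there is a face $F \in \mathcal{C} \setminus D_{m,n}$; because $D_{m,n}$ already contains every set omitting at least one element of $M$ and one element of $N$, the only sets not in $D_{m,n}$ are those containing all of $M$ or all of $N$. So $F \supseteq M$ or $F \supseteq N$; assume without loss of generality $F \supseteq M$. Taking an inclusion-maximal such $F$, since $V \notin \mathcal{C}$ we have $F = M \cup T$ for some $T \subsetneq N$, and we may shrink to get a face of $\mathcal{C}$ of the form $M \cup \{u\}$ for a single $u \in N$ (here using $n \ge 1$ so $N \neq \emptyset$), noting $M \cup \{u\}$ is genuinely not a face of $D_{m,n}$.

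The heart of the argument is then to derive a forbidden minor. The first key step is to pass to $\link_u(\mathcal{C})$ restricted appropriately, or to induct on $m$: consider $w \in M$ and examine $\link_w(\mathcal{C})$. One computes $\link_w(D_{m,n}) = D_{m-1,n}$ (this is stated explicitly at the end of Section~\ref{sec:basic}), and $\link_w(\mathcal{C})$ contains $\link_w(D_{m,n}) = D_{m-1,n}$ as a subcomplex on the vertex set $V \setminus \{w\}$, which has $(m-1)+n+2$ vertices. Moreover $M \cup \{u\}$ being a face of $\mathcal{C}$ means $(M \setminus \{w\}) \cup \{u\}$ is a face of $\link_w(\mathcal{C})$, which is not a face of $D_{m-1,n}$, so $\link_w(\mathcal{C}) \supsetneq D_{m-1,n}$; one must also check $\link_w(\mathcal{C})$ has no facet of dimension $(m-1)+n$, which follows because such a facet would lift to a facet of dimension $m+n$ in $\mathcal{C}$. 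Then the inductive hypothesis (on $m+n$) applied to $\link_w(\mathcal{C})$ gives an immediate contradiction — provided we have set up the base case. The base case is $m = n = 1$: there $D_{1,1} = C_4$ on $4$ vertices, and $\mathcal{C} \supsetneq C_4$ on $4$ vertices with no facet of dimension $2$ (no triangle, no tetrahedron) means $\mathcal{C}$ has $C_4$ plus at least one diagonal but no triangle — impossible, since adding any edge to $C_4$ on $4$ vertices creates a triangle. So the base case holds vacuously/trivially.

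The main obstacle I anticipate is bookkeeping the hypothesis "$\mathcal{C}$ has no facet of dimension $m+n$" correctly through the link operation and ensuring the induction is on a quantity that genuinely decreases: one needs that a facet of $\link_w(\mathcal{C})$ of dimension $(m-1)+n$ has the form $G$ with $w \notin G$, $G \cup \{w\}$ a face of $\mathcal{C}$ of dimension $m+n$, and that $G \cup \{w\}$ is in fact a facet of $\mathcal{C}$ (not contained in a larger face), which is forced since $\mathcal{C}$ lives on only $m+n+2$ vertices. A secondary subtlety is the reduction from "$F \supseteq M$ or $F \supseteq N$" down to a face of the exact form $M \cup \{u\}$: one must verify that if $M \cup T \in \mathcal{C}$ with $\emptyset \neq T \subsetneq N$ then, passing to a subface, $M \cup \{u\}$ is a face for any $u \in T$, which is automatic from the downward-closure of simplicial complexes, and that $M \cup \{u\} \notin D_{m,n}$, which is clear since it omits no element of $M$. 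Once these are in place the contradiction is immediate, so the proof is short; the only real work is the careful matching of dimensions and the handling of the degenerate boundary cases $m$ or $n$ equal to $1$ (and implicitly the convention for $\partial\Delta_1$ being two isolated points, so that $D_{1,n}$ still makes sense).
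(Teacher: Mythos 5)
Your approach is genuinely different from the paper's: the paper proves this in one step by Alexander duality. From $D_{m,n}\subseteq\mathcal{C}$ one gets $\mathcal{C}^*\subseteq\Delta_m\sqcup\Delta_n$; the hypothesis on facets translates to $\mathcal{C}^*$ having no ghost vertices; so if $\mathcal{C}^*\subsetneq\Delta_m\sqcup\Delta_n$ then one side of $\mathcal{C}^*$ has a minimal non-face $S$ with $\#S\geq 2$, and any vertex $v$ of the other side together with $S$ induces $\partial\Delta_{\#S-1}\sqcup\{v\}$, contradicting $\beta$-avoidance via Proposition~\ref{bdual}. Your link-induction is a reasonable alternative and the inductive step is essentially correct whenever it reduces to a complex of the form $D_{m',n'}$ with $m',n'\geq 1$, but the base case as stated is insufficient and this is a genuine gap. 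If $m=1$, linking at $w\in M$ lands on $D_{0,n}$, which has a ghost vertex and is outside the scope of the statement being proved, so the induction hypothesis does not apply. You might hope to instead link at a vertex of $N$ inside the extra face, but if $m=1$, $M$ is a facet of $\mathcal{C}$, and $N\notin\mathcal{C}$, the extra face contains no vertex of $N$ and this is impossible. That case really does occur and requires a separate argument -- for example via Proposition~\ref{ksimplex}, or by the dual route (there $\mathcal{C}^*=\Delta_1\sqcup\partial\Delta_n$, which visibly has $\partial\Delta_n\sqcup\{v\}$ induced). So all pairs with $\min(m,n)=1$ must be handled as base cases, not just $(1,1)$; you flagged the boundary cases as a possible subtlety but did not actually resolve them.

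Two smaller slips. The parenthetical ``(equivalently, no facet equal to all of $V$)'' is wrong: a facet of dimension $m+n$ has $m+n+1$ vertices, i.e., it omits exactly one vertex of $V$ -- it is a big facet, not $V$ itself. (Your argument tacitly needs both $V\notin\mathcal{C}$ and that no big facet exists.) And in the $m=n=1$ base case, ``adding any edge to $C_4$ creates a triangle -- impossible'' is not a valid step on its own: adding a diagonal to $C_4$ with no $2$-face present gives the $1$-dimensional complex $K_4-e$ (or $K_4$), which has a $3$-cycle in its $1$-skeleton but no $2$-dimensional face, and that is not forbidden by the ``no facet of dimension $2$'' hypothesis. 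The actual contradiction is that such a complex is not $\beta$-avoiding: linking at a degree-$3$ vertex yields three isolated vertices, an induced $\partial\Delta_1\sqcup\{v\}$.
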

\begin{proof}
	{ Note that $\mathcal{C}^* \subseteq \Delta_m \sqcup \Delta_n$ because $D_{m,n} \subseteq \mathcal{C}$.
	Since $\mathcal{C}$ does not have a facet of dimension $m+n$, $\calc^*$ has no ghost vertices.
	Since $\calc^*$ has no ghost vertices, if $\calc^* \subsetneq \Delta_m \sqcup \Delta_n$,
	then $\calc^*$ would have an induced $\{v\} \sqcup \partial \Delta_k$ for some $k \geq 1$.
	In this case $\calc^*$ would not be $\beta$-avoiding and so by Proposition \ref{bdual}, neither would $\calc$.}
\end{proof}


\section{The $1$-Skeleton of a $\beta$-avoiding Complex}\label{sec:1skel}

In this section we prove Lemma \ref{1skel} which gives a complete characterization of the $1$-skeleton of a $\beta$-avoiding simplicial complex.
This is a crucial technical lemma in the proof of Theorem \ref{main}.
We start with a technical proposition about graphs.

\begin{prop}\label{bipartite}
	Let $H$ be a connected graph that avoids $K_3$ and $P_4$ as induced subgraphs.
	Then $H$ is a complete bipartite graph.
\end{prop}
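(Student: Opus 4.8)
The plan is to prove the contrapositive-flavored dichotomy directly: a connected graph $H$ with no induced $K_3$ and no induced $P_4$ is complete bipartite. First I would recall that a graph is bipartite if and only if it has no odd cycle, so the natural first step is to show $H$ has no odd cycle. Since $H$ is $K_3$-free, there are no $3$-cycles. For a shortest odd cycle of length $\ge 5$, any such cycle must be induced (a chord would split it into a shorter odd cycle plus an even path, contradicting minimality), but an induced cycle of length $\ge 5$ contains an induced $P_4$ on four consecutive vertices — contradiction. Hence $H$ is bipartite, say with parts $X$ and $Y$.

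Next I would show the bipartition is complete, i.e. every $x \in X$ is adjacent to every $y \in Y$. The key tool is connectedness together with the $P_4$-free condition. Suppose for contradiction that $x \in X$ and $y \in Y$ are non-adjacent. Since $H$ is connected, take a shortest path from $x$ to $y$; because $H$ is bipartite and $x,y$ lie in opposite parts, this path has odd length $\ge 3$, and a shortest path is always induced. An induced path on $\ge 4$ vertices contains an induced $P_4$ — contradiction. Therefore every vertex of $X$ is adjacent to every vertex of $Y$, so $H = K_{|X|,|Y|}$ is complete bipartite.

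The main obstacle, such as it is, is being careful about the degenerate cases: if one of the parts $X$ or $Y$ is empty then $H$ is a single vertex (a "complete bipartite graph" $K_{1,0}$ or $K_{0,1}$), which one should check is consistent with whatever convention the paper intends — connectedness forces $H$ nonempty, and with no edges at all $H$ connected means $|V(H)| = 1$. Similarly one should note that the $P_4$-freeness is used twice, once to kill long induced odd cycles and once to kill long induced shortest paths, and in both places the essential fact is simply that an induced path or cycle on $\ge 4$ vertices contains an induced subpath isomorphic to $P_4$. No other forbidden structure is needed, and the argument does not even invoke any of the simplicial-complex machinery from earlier in the paper; it is purely graph-theoretic. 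I would present it in two short paragraphs matching the two steps above.
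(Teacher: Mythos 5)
Your proof is correct, and it takes a genuinely different route from the paper's. You first establish bipartiteness via the classical odd-cycle characterization: $K_3$-freeness kills $3$-cycles, and any shortest odd cycle of length $\ge 5$ is induced and hence yields an induced $P_4$ on four consecutive vertices. Then you show completeness by noting that a shortest path between two non-adjacent vertices in opposite parts is induced and has length $\ge 3$, again yielding an induced $P_4$. The paper instead fixes a vertex $u$ and builds the bipartition explicitly from $N(u)$ (neighbors) and $M(u)$ (non-neighbors including $u$): it shows every non-neighbor of $u$ is at distance exactly $2$, that $N(u)$ is independent (else $K_3$), that $M(u)$ is independent (an edge in $M(u)$ together with the two length-$2$ paths to $u$ produces a $5$-cycle, which must have a chord to avoid $P_4$ but any chord creates $K_3$), and finally that every $M(u)$--$N(u)$ pair is adjacent. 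Your argument is more modular and leans on standard facts (shortest cycles/paths are induced; induced paths of length $\ge 3$ contain $P_4$), while the paper's is self-contained and exhibits the bipartition directly without appealing to the odd-cycle criterion. One very minor imprecision in your write-up: a chord does not split an odd cycle into ``a shorter odd cycle plus an even path'' but rather into two cycles of total length $n+2$, exactly one of which is odd and both of which are shorter; the conclusion you draw is nevertheless correct.
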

\begin{proof}
	Let $u \in V(H)$, let $N(u)$ denote the neighbors of $u$,
	and let $M(u)$ denote the non-neighbors of $u$ (this set includes $u$).
	The bipartition of the vertices of $H$ will be $M(u)$ and $N(u)$.
	Let $v \in M(u)\setminus\{u\}$.
	Since $H$ is connected, there exists a path $u=u_1,u_2,\dots,u_k=v$.
	Assume $k$ is minimal.
	Since $v \neq u$, $k > 1$.
	We cannot have $k=2$ since $u,v$ are non-neighbors.
	We cannot have $k = 4$, since in order to avoid an induced $P_4$, we would need an edge $(u_i,u_{i+2})$ inducing a $K_3$,
	or an edge $(u,v)$ contradicting that $u,v$ are non-neighbors.
	If $k\ge 5$, there must exist an edge $(u_1,u_4)$ to avoid an induced $P_4$, contradicting minimality of $k$.
	So we have $k=3$.
	So for any $v\in M(u)\setminus\{u\}$, there exists a path $u,a,v$.
	\\
	\indent
	Now we show that $H$ is bipartite with bipartition $M(u)$ and $N(u)$.
	It is clear that $N(u)$ is an independent set of vertices, for if $v,w \in N(u)$ had an edge between them,
	there would be a $K_3$ on $u,v,w$.
	Now we show that $M(u)$ is an independent set of vertices.
	Assume $w,v\in M(u)$.
	If either $w,v$ is $u$, there is no edge between them by definition of $M(u)$, so assume $w,v \neq u$.
	Then by the above, we have paths $u,a,v$ and $u,b,w$.
	{ So an edge $(v,w)$ would induce the $5$-cycle $u,a,v,w,b,u$.
	This $5$-cycle must have a chord to avoid inducing $P_4$, but any chord in a $5$-cycle induces a $K_3$.}
	\\
	\indent
	Now we show that $H$ is complete bipartite.
	Let $x \in M(u)$ and $y \in N(u)$.
	Since $H$ is connected, there is a path $x=u_1,...,u_k=y$.
	We may without loss of generality assume $k\le 3$ since otherwise we could shorten the path using the edge $(u_1,u_4)$ required to avoid a $P_4$.
	The sets $M(u)$ and $N(u)$ are disjoint, so $k \neq 1$.
	If $k=3$, then there is an induced $P_4$ $x,u_2,y,u$.
	So $k=2$, and so $(x,y)$ is an edge.
\end{proof}

For a graph $G$ we define the complement graph $G^c$ on the same set of vertices
such that $(u,v)$ is an edge of $G^c$ if and only if $(u,v)$ is not an edge in $G$.
Now we can use Proposition \ref{bipartite} to give a strong restriction on the structure of $G^c$ whenever $G$
is the $1$-skeleton of a $\beta$-avoiding complex.

\begin{prop}\label{complement}
	If $G$ is the $1$-skeleton of a $\beta$-avoiding simplicial complex $\mathcal{C}$,
	then each connected component of $G^c$ is complete bipartite.
\end{prop}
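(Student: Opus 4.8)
The plan is to apply Proposition \ref{bipartite} to each connected component of $G^c$, so the task reduces to showing that $G^c$ (restricted to any component) avoids both $K_3$ and $P_4$ as induced subgraphs. By the definition of the complement, an induced $K_3$ in $G^c$ is an induced independent set of size $3$ in $G$, and an induced $P_4$ in $G^c$ is an induced $P_4$ in $G$ (the path $P_4$ on four vertices is self-complementary). So it suffices to prove: (i) $G$ has no induced independent set on $3$ vertices, and (ii) $G$ has no induced $P_4$.

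For (i), the three vertices of an independent set in the $1$-skeleton span an induced subcomplex of $\mathcal{C}$ with no edges, i.e.\ the complex $\partial\Delta_1 \sqcup \{v\}$ (three isolated points), which is one of the forbidden minors from Proposition \ref{prop:badcomplexes} (the $n=1$ case of $\partial\Delta_n \sqcup \{v\}$). Since $\mathcal{C}$ is $\beta$-avoiding, and any induced subcomplex is in particular a minor (take $R = \emptyset$ in Definition \ref{def:minor}), this is impossible. This is exactly the remark already made in the text just before Proposition \ref{bdual}. For (ii), suppose $u_1,u_2,u_3,u_4$ induce a $P_4$ in $G$, meaning the edges present among them (as edges of $\mathcal{C}$, i.e.\ $1$-faces) are exactly $u_1u_2$, $u_2u_3$, $u_3u_4$. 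I want to conclude that the induced subcomplex of $\mathcal{C}$ on these four vertices is exactly $P_4$ (the one-dimensional complex), which is again a forbidden minor. The only way this could fail is if $\mathcal{C}$ contained a $2$-face among $\{u_1,u_2,u_3,u_4\}$; but every $2$-face has all three of its edges present, and the induced graph $P_4$ contains no triangle, so no such $2$-face exists, and no higher face either. Hence the induced subcomplex really is $P_4$, contradicting $\beta$-avoidance.

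Having established (i) and (ii), fix a connected component $H$ of $G^c$. Its complement within that vertex set is the induced subgraph of $G$, which by (i)--(ii) avoids $K_3$ and $P_4$ as induced subgraphs; equivalently $H$ itself avoids $K_3$ and $P_4$ induced. Since $H$ is connected, Proposition \ref{bipartite} applies and yields that $H$ is complete bipartite, which is the claim.

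The only mildly delicate point — the "main obstacle," though it is routine — is being careful that an induced \emph{subgraph} of the $1$-skeleton on a vertex set $T$ records exactly the $1$-faces of the induced \emph{subcomplex} $\mathcal{C}|_T$, and that for the vertex sets of size $3$ and $4$ arising above the induced subcomplex has no faces of dimension $\geq 2$; this follows because a $k$-face of $\mathcal{C}|_T$ would force all $\binom{k+1}{2}$ of its edges into the induced graph, and neither the empty graph on $3$ vertices nor $P_4$ on $4$ vertices contains a triangle. With that observation the forbidden-minor obstruction kicks in directly.
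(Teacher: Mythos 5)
Your proof is correct and takes essentially the same route as the paper: reduce to showing that $G$ has no induced $P_4$ and no induced independent triple, observe that these correspond (under complementation) to $P_4$ and $K_3$ in $G^c$, and invoke Proposition \ref{bipartite}. You are slightly more explicit than the paper about why an induced $P_4$ or independent triple in the $1$-skeleton forces the corresponding induced \emph{subcomplex} to contain no higher-dimensional faces — a worthwhile sanity check, though it does not change the argument.
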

\begin{proof}
	Note that $\{v\} \sqcup \partial \Delta_1$ consists of three disconnected vertices, its complement graph is $K_3$.  The path $P_4$ is its own complement.
	Since $\mathcal{C}$ is $\beta$-avoiding,
	$G$ avoids $P_4$ and an independent set of size three as induced subgraphs.
	So each connected component of $G^c$ avoids $P_4$ and $K_3$ as induced subgraphs.
	Proposition \ref{bipartite} therefore implies that each connected component of $G^c$ is complete bipartite.
\end{proof}

Now we are ready to characterize the $1$-skeleton of a $\beta$-avoiding simplicial complex.

\begin{lemma}\label{1skel}
	Let $\mathcal{C}$ be a $\beta$-avoiding simplicial complex and let $G$ denote its $1$-skeleton.
	Then $G$ is one of the following
	\begin{itemize}
		\item[(a)] $K_N$
		\item[(b)] Two complete graphs glued along a (possibly empty) common clique
		\item[(c)] The iterated cone over a $4$-cycle.
	\end{itemize}
\end{lemma}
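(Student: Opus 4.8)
The plan is to analyze the complement graph $G^c$ using Proposition \ref{complement}, which tells us that every connected component of $G^c$ is complete bipartite. First I would dispose of the trivial cases: if $G^c$ has no edges, then $G = K_N$ and we are in case (a). So assume $G^c$ has at least one edge. I would then split into cases according to the number of connected components of $G^c$ that contain an edge (i.e.\ that are not isolated vertices).

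\emph{Case 1: $G^c$ has exactly one component $H$ with an edge, the rest being isolated vertices.} Write the bipartition of $H$ as $A \sqcup B$ with $|A|, |B| \geq 1$. The isolated vertices of $G^c$ are joined to everything in $G$, so they are cone vertices of $G$; it suffices to understand the subgraph $G'$ on $A \sqcup B$. In $G'$, the nonedges are exactly the pairs with one endpoint in $A$ and one in $B$, so $G'$ is the disjoint union $K_{|A|} \sqcup K_{|B|}$ of two cliques (glued along the empty clique). If $|A| = 1$ or $|B| = 1$ this is instead a single clique with a cut vertex, which also fits case (b) (two complete graphs glued along a common clique, one of them being a single point — or even reading it as $K_N$). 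Either way, $G$ is two complete graphs glued along a possibly empty clique, so we land in case (b). The place where I expect to need care is checking that "isolated vertices of $G^c$ become part of the glued cliques" is compatible with the statement of (b): a cone vertex added to $K_a \sqcup K_b$ produces two cliques $K_{a+1}, K_{b+1}$ glued along the single common vertex, and iterating gives two cliques glued along a common clique of size $p$ where $p$ is the number of isolated vertices of $G^c$.

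\emph{Case 2: $G^c$ has two or more components with an edge.} Here is where the main obstacle lies. Suppose $H_1$ and $H_2$ are two distinct components of $G^c$ each containing an edge, say $\{a_1, b_1\}$ a nonedge of $G$ inside $H_1$ and $\{a_2, b_2\}$ a nonedge of $G$ inside $H_2$. Since $H_1, H_2$ are distinct components of $G^c$, every vertex of $\{a_1, b_1\}$ is $G$-adjacent to every vertex of $\{a_2, b_2\}$. I would argue that the induced subcomplex of $\mathcal{C}$ on $\{a_1, b_1, a_2, b_2\}$ has $1$-skeleton equal to $C_4$ (the $4$-cycle $a_1 a_2 b_1 b_2$), and then invoke Proposition \ref{square}: the complex induced on the non-ghost vertices of $\mathcal{C}$ must be $\cone^p(C_4)$, whose $1$-skeleton is the iterated cone over a $4$-cycle, giving case (c). The subtlety is that I must rule out the possibility of \emph{three or more} mutually adjacent nonedges forming something larger than a $4$-cycle in the complement — but if $G^c$ had three components $H_1, H_2, H_3$ each with an edge, picking nonedges $\{a_i,b_i\}$ inside each would give an induced $K_{3,3}$-like structure in $G$ whose complement contains three disjoint edges; I would show the induced complex on these six vertices contains $O_6$ (the octahedron, whose $1$-skeleton is exactly $K_{2,2,2}$, the complement of three disjoint edges) as a minor, contradicting $\beta$-avoidance. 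Hence at most two components of $G^c$ have an edge, and when exactly two do, each must be a \emph{single} edge (bipartition sizes $1,1$) — otherwise, say $H_1$ has bipartition with $|A_1| \geq 2$, and combined with an edge of $H_2$ we again find $O_6$ or a forbidden minor inside $\mathcal{C}$. So in Case 2 precisely the $1$-skeleton is the iterated cone over $C_4$, where the extra cone vertices are the isolated vertices of $G^c$.

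To finish, I would collect these cases: no edges in $G^c$ gives (a); one nontrivial component gives (b); two nontrivial components (each forced to be a single edge) gives (c); and three or more is impossible. The step I expect to be the main obstacle is Case 2 — precisely the bookkeeping needed to show that two nontrivial complement-components force each to be a single edge and that three force a forbidden $O_6$ minor, since this requires carefully translating adjacency patterns in $G$ into induced subcomplexes of $\mathcal{C}$ and then into minors, using that $\mathcal{C}$ is $\beta$-avoiding and that $O_6$ and $P_4$ and $\{v\} \sqcup \partial\Delta_k$ are all forbidden.
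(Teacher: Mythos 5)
Your strategy is essentially the paper's: apply Proposition \ref{complement}, dispose of the $G^c$-edgeless case (a), and then split according to whether the non-isolated part of $G^c$ is connected (case (b)) or not (case (c)), using Proposition \ref{square} for the latter. However, the part you flag as the ``main obstacle'' in Case~2 is, in fact, already resolved the moment you invoke Proposition \ref{square}, and the extra argument you propose to fill it is not correct. Once $\{a_1,b_1,a_2,b_2\}$ induces $C_4$, Proposition \ref{square} forces the complex on the non-ghost vertices to be $\cone^p(C_4)$; that single conclusion already tells you that $G^c$ (minus isolated vertices) is exactly two disjoint $K_2$'s and simultaneously rules out a third nontrivial component or a component larger than $K_2$. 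There is nothing more to check. In particular, the proposed detour --- that picking a nonedge from each of three nontrivial components produces an induced subcomplex on six vertices containing $O_6$ as a minor --- does not go through: the induced subcomplex has $1$-skeleton $K_{2,2,2}$, but it need not contain $O_6$ as a minor. For instance, if no $2$-faces are present, the induced subcomplex is the $1$-dimensional graph $K_{2,2,2}$, and deletions and links of a $1$-dimensional complex can never produce the $2$-dimensional complex $O_6$. The correct mechanism for excluding such configurations is exactly Proposition \ref{square}, which you already applied.

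There is also a small genuine gap you should close explicitly: in Case~2 you conclude that the $1$-skeleton of the non-ghost part is an iterated cone over $C_4$, but the lemma is about $G$ itself, so you must rule out ghost vertices. This is easy: a ghost vertex is isolated in $G$, hence universal in $G^c$, and a universal vertex of $G^c$ would be adjacent to both $H_1$ and $H_2$, merging them into a single component and contradicting the Case~2 hypothesis. (Alternatively, two opposite vertices of the induced $C_4$ together with a ghost vertex form an independent $3$-set $\partial\Delta_1\sqcup\{v\}$.) With that observation the non-ghost part is all of $\mathcal{C}$ and case (c) follows. For comparison, the paper organizes Case~2 slightly differently: it sets $H$ to be $G^c$ with isolated vertices deleted, shows that if $H$ is neither $K_{m,n}$ nor $K_2\sqcup K_2$ then $H$ induces $K_2\sqcup K_2 \sqcup K_2$ or $P_3\sqcup K_2$, and rules those out via Proposition \ref{square} applied to the complement graphs on $5$ or $6$ vertices (neither of which has a suspension vertex). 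Both routes rest on the same two propositions, and your more direct application of Proposition \ref{square} is arguably cleaner once the redundant $O_6$ digression and the ghost-vertex point are sorted out.
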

\begin{proof}
	By Proposition \ref{complement}, we know that each connected component of $G^c$ is complete bipartite.
	Let $H$ denote the induced subgraph of $G^c$ that removes all isolated vertices.
	If $H$ is empty, then $G^c$ is an independent set of vertices and therefore $G = K_N$.
	So assume $H$ is nonempty.
	We claim that if $H$ is neither $K_{m,n}$ nor $K_2 \sqcup K_2$ for $m,n \ge 1$,
	then $H$ induces either $K_2 \sqcup K_2 \sqcup K_2$ or $P_3 \sqcup K_2$.
	To prove the claim, first assume that $H$ avoids $K_2 \sqcup K_2 \sqcup K_2$.
	Since $H$ is not $K_{m,n}$ and has no isolated vertices, this implies that $H$ has exactly two components.
	Since $H$ is not $K_2 \sqcup K_2$, some connected component of $H$ has at least three vertices.
	So we have a $P_3$ induced by this component, and $K_2$ induced by the other component and so the claim is proven.
	\\
	\indent
	The complement graphs of $K_2 \sqcup K_2 \sqcup K_2$ or $P_3 \sqcup K_2$
	are shown in Figure \ref{fig:complements}.
	Both graphs have $C_4$ induced, but neither is the $1$-skeleton
	for an iterated cone over $C_4$ since neither graph has a suspension vertex.
	Proposition \ref{square} therefore implies that neither is the $1$-skeleton of a $\beta$-avoiding simplicial complex
	and so $G$ may not induce either.
	So $G^c$ may not induce $K_2 \sqcup K_2 \sqcup K_2$ nor $P_3 \sqcup K_2$ and therefore neither may $H$.
	The claim then implies that $H$ must be either $K_{m,n}$ or $K_2 \sqcup K_2$.
	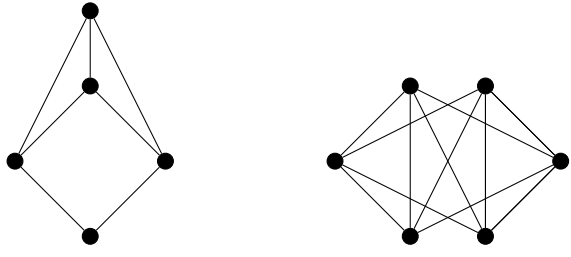
\begin{figure}
		\begin{tikzpicture}
			\vertex (a) at (1,0){};  
			\vertex (b) at (2,1){};
			\vertex (c) at (0,1){};
			\vertex (d) at (1,2){};				
			\vertex (e) at (1,3){};
			\path
				(a) edge (b)
				(b) edge (d)
				(d) edge (c)
				(c) edge (a)
				(e) edge (b)
				(e) edge (c)
				(e) edge (d)
			;   
		\end{tikzpicture}
		\hspace{50pt}
		\begin{tikzpicture}
			\vertex (a) at (2,0){};  
			\vertex (b) at (3,1){};
			\vertex (c) at (2,2){};
			\vertex (a') at (1,0){};
			\vertex (b') at (0,1){};
			\vertex (c') at (1,2){};
			\path
				(a) edge (b)
				(a) edge (c)
				(a) edge (b')
				(a) edge (c')
				(b) edge (a)
				(b) edge (c)
				(b) edge (a')
				(b) edge (c')
				(c) edge (b)
				(c) edge (a)
				(c) edge (b')
				(c) edge (a')
				(a') edge (b')
				(b') edge (c')
				(c') edge (a')
			;   
		\end{tikzpicture}
		\caption{The complement graphs of $P_3 \sqcup K_2$ and $K_2 \sqcup K_2 \sqcup K_2$}\label{fig:complements}
	\end{figure}
	Assume $G^c$ has $p$ isolated vertices.
	If $H = K_{m,n}$ then $G$ is a $K_{m+p}$ and a $K_{n+p}$ glued along a common $K_p$.
	If $H= K_2 \sqcup K_2$ then $G$ is an iterated cone over a $4$-cycle.
\end{proof}

As we see in the following proposition, the $1$-skeleton of $\calc$ completely determines $\calc$ when it is
obtained by gluing two complete graphs along an empty clique.

\begin{prop}\label{disconnected}
	Let $\mathcal{C}$ be a $\beta$-avoiding simplicial complex such that the $1$-skeleton
	of $\mathcal{C}$ is the disjoint union of cliques $K_m$ and $K_n$.
	Then $\mathcal{C} = \Delta_m \sqcup \Delta_n$.
\end{prop}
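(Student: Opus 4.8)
The plan is to show that if the 1-skeleton of a $\beta$-avoiding complex $\mathcal{C}$ splits as a disjoint union $K_m \sqcup K_n$, then no face of $\mathcal{C}$ can straddle both parts, and moreover each part must be a full simplex. First I would observe that since there are no edges between the vertex set $M$ (underlying $K_m$) and the vertex set $N$ (underlying $K_n$), any face of $\mathcal{C}$ is contained entirely in $M$ or entirely in $N$; hence $\mathcal{C} = \mathcal{C}_M \sqcup \mathcal{C}_N$ where $\mathcal{C}_M$ is the induced subcomplex on $M$ and $\mathcal{C}_N$ is the induced subcomplex on $N$. Each of $\mathcal{C}_M$, $\mathcal{C}_N$ is again $\beta$-avoiding (as an induced subcomplex, by Proposition \ref{minor}), and each has complete 1-skeleton.

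Next I would argue that a $\beta$-avoiding complex with complete 1-skeleton on a nonempty vertex set must be a simplex. Suppose $\mathcal{C}_M$ has complete 1-skeleton but is not $\Delta_{m}$ (i.e.\ $M$ itself is not a face); let $S \subseteq M$ be a \emph{minimal} non-face. Since the 1-skeleton is complete, $\#S \geq 3$, so the induced subcomplex on $S$ is exactly $\partial\Delta_{\#S-1}$. Now I pick any vertex $v \in N$ (using $n \geq 0$; if $N$ is empty one instead just needs $m \geq 0$, and the statement is immediate when one side is empty — I'd handle that degenerate case separately or note it is covered by the simplex-extraction argument with the roles reversed). The induced subcomplex on $S \cup \{v\}$ is $\partial\Delta_{\#S-1} \sqcup \{v\}$, which is a forbidden minor from Proposition \ref{prop:badcomplexes}(5), contradicting that $\mathcal{C}$ is $\beta$-avoiding. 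Actually, it is cleaner not to invoke $N$ at all: the key point is that the whole complex has an independent set of size $3$ once you have $\partial\Delta_k$ for $k \geq 2$ sitting in one component and any vertex of the other, or alternatively that $\partial\Delta_k \sqcup \{v\}$ appears as an induced subcomplex.

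Here is the obstacle I anticipate: the case where one of the two cliques is empty, or where the complex has no vertices at all, needs the convention $\Delta_{-1} = \{\emptyset\}$ and $\Delta_{-2} = \{\}$ introduced just before Proposition \ref{disjointsimplices}, and one must be careful that "$K_m$ and $K_n$ disjoint" with $m$ or $n$ equal to $0$ still forces a simplex structure — but this is really only a matter of checking that a $\beta$-avoiding complex on a single vertex is $\Delta_0$ and on no vertices is $\Delta_{-1}$ or $\Delta_{-2}$, which is trivial. The genuinely load-bearing step is the claim "complete 1-skeleton plus $\beta$-avoiding implies simplex," and the entire content of that step is the forbidden minor $\partial\Delta_k \sqcup \{v\}$: a minimal non-face of size $\geq 3$ produces an induced $\partial\Delta_{k}$, and then I need a witnessing "extra" vertex to complete the picture into the forbidden configuration. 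Since the hypothesis gives me the other clique $K_n$ (or, if $n=0$, I must instead note that $\partial\Delta_k$ itself already fails to be unimodular only via the $\sqcup\{v\}$ family, so I genuinely do need $n \geq 1$ there — but if $n = 0$ then $\mathcal{C} = \mathcal{C}_M$ and I can run the argument inside $\mathcal{C}_M$ only if $m$ is large enough to spare a vertex outside $S$; if not, $\mathcal{C}_M = \partial\Delta_{m-1}$ itself, which contradicts $\beta$-avoidance once $m \geq 2$ because... hmm, $\partial\Delta_k$ alone is not on the forbidden list). So the real subtlety, and the main thing to get right, is arguing that the disjoint-union hypothesis guarantees enough "room" — specifically that whenever a minimal non-face $S$ of size $\geq 3$ exists in one component, there is always a vertex outside $S$ (in the same component if $\#\text{that component} > \#S$, otherwise in the other component, which is nonempty or else the whole complex is just $\partial\Delta_k$ and we separately rule that out). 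I would organize the proof around exhibiting, in every such case, an induced copy of $\partial\Delta_k \sqcup \{v\}$.
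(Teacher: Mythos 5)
Your core argument is exactly the paper's: isolate a minimal non-face $S$ inside one clique (which must have $\#S \geq 3$ because the $1$-skeleton is complete within the clique, and which induces precisely $\partial\Delta_{\#S-1}$ because $S$ is \emph{minimal}), adjoin any vertex $u$ of the other clique, and observe that since there is no edge between the two parts the induced subcomplex on $S \cup \{u\}$ is $\partial\Delta_{\#S-1} \sqcup \{u\}$, a forbidden minor from Proposition~\ref{prop:badcomplexes}(5). That is the whole of the paper's two-sentence proof.

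Two remarks on the surrounding material in your write-up. First, the intermediate claim you state --- ``a $\beta$-avoiding complex with complete $1$-skeleton on a nonempty vertex set must be a simplex'' --- is false as a standalone statement: $\partial\Delta_2$ is $\beta$-avoiding and has $1$-skeleton $K_3$ but is not a simplex (it is $\Lambda(\Delta_0 \sqcup \Delta_0)$, hence unimodular). What you actually prove, and what you need, is the relative statement: \emph{given} a $\beta$-avoiding $\mathcal{C} = \mathcal{C}_M \sqcup \mathcal{C}_N$ with both sides nonempty, $\mathcal{C}_M$ must be the full simplex on $M$. The witness vertex $u \in N$ is load-bearing and cannot be removed, contrary to your aside that it would be ``cleaner not to invoke $N$.'' Second, the lengthy worry about the $n = 0$ or $N = \emptyset$ case is dispensable: the hypothesis that the $1$-skeleton is a \emph{disjoint union of two cliques} means both parts are nonempty (and in the proof of Theorem~\ref{main}, this proposition is applied precisely in the disconnected case of Lemma~\ref{1skel}(b)), so the witness vertex is always available. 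Modulo these two slips in the exposition, your proof is correct and identical in substance to the paper's.
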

\begin{proof}
	Let $v_1,\dots,v_k$ be vertices of the $K_m$.
	If $\{v_1,\dots,v_k\}$ is a minimal non-face of $\mathcal{C}$, then if we let $u$
	be a vertex in the $K_n$, then the complex indued on $v_1,\dots,v_k,u$ is $\partial\Delta_k \sqcup \{u\}$.
\end{proof}


\section{The Main Theorem}\label{sec:main}

The goal of this section is to give a proof of Theorem \ref{main}
which gives a complete characterization of the unimodular binary
hierarchical models.

We begin this section by defining a \emph{nuclear} complex.
A nuclear complex is a complex that can be obtained from a disjoint union
of two simplices by adding cone vertices, adding ghost vertices, taking Lawrence liftings
and taking Alexander duals.
Since $\Delta_m \sqcup \Delta_n$ is unimodular and these operations all preserve unimodularity,
nuclear complexes are unimodular.
Part of our main result is the converse - unimodular complexes are nuclear.

\begin{defn}\label{defn:nuclear}
	A simplicial complex $\mathcal{C}$ is \emph{nuclear} if one of the following is true
	\begin{enumerate}
		\item\label{nuclear:lawrence} $\mathcal{C} = \Lambda(\mathcal{D})$ where $\mathcal{D}$ is nuclear
		\item\label{nuclear:ghost} $\mathcal{C} = G(\mathcal{D})$ where $\mathcal{D}$ is nuclear
		\item\label{nuclear:disjoint} $\mathcal{C} = \cone^p(\Delta_m \sqcup \Delta_n)$ for $p,m,n \ge 0$
		\item\label{nuclear:dual} $\mathcal{C} = \cone^p(D_{m,n})$ for $p \ge 0$ and $m,n \ge 1$
		\item\label{nuclear:simplex} $\mathcal{C} = \Delta_k$ for $k \ge -2$.
	\end{enumerate}
	Every nuclear complex $\mathcal{C}$ can be constructed by applying the operations
	$\cone^p(\cdot)$, $G(\cdot)$, and $\Lambda(\cdot)$ to a complex $\mathcal{D}$
	where $\mathcal{D}$ is of the form $\Delta_m \sqcup \Delta_n$, $D_{m,n}$, or $\Delta_k$.
	We refer to $\mathcal{D}$ as the \emph{nucleus} of $\mathcal{C}$.
\end{defn}
Note that $D_{m,0}$ has a ghost vertex for all $m$.
This is why we have $m,n \ge 1$ in (\ref{nuclear:dual}).
We note that the collection of nuclear complexes is closed under Alexander duality.

\begin{prop}\label{dualclosed}
	If $\mathcal{C}$ is nuclear then so is $\mathcal{C}^*$.
\end{prop}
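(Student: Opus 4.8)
The plan is to prove Proposition \ref{dualclosed} by structural induction on the construction of the nuclear complex $\mathcal{C}$, following the five cases of Definition \ref{defn:nuclear}, using the three ``compatibility'' identities between Alexander duality and the building operations: $G(\mathcal{D})^* = \Lambda(\mathcal{D}^*)$ (stated just after Proposition \ref{ghost}), its dual $\Lambda(\mathcal{D})^* = G(\mathcal{D}^*)$, and the fact that $\cone^p(\cdot)$ commutes with taking duals (Proposition \ref{commutes}). I would also record the base identities $(\Delta_m \sqcup \Delta_n)^* = D_{m,n}$, which is the definition of $D_{m,n}$, and dually $D_{m,n}^* = \Delta_m \sqcup \Delta_n$ (Alexander duality is an involution on simplicial complexes), together with the self-duality of simplices up to the degenerate cases, i.e.\ $\Delta_k^*$ is again a simplex (or void/irrelevant complex) of complementary dimension.

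The induction would go as follows. In case (\ref{nuclear:lawrence}), $\mathcal{C} = \Lambda(\mathcal{D})$ with $\mathcal{D}$ nuclear, so $\mathcal{C}^* = \Lambda(\mathcal{D})^* = G(\mathcal{D}^*)$; by the inductive hypothesis $\mathcal{D}^*$ is nuclear, hence $G(\mathcal{D}^*)$ is nuclear by clause (\ref{nuclear:ghost}). In case (\ref{nuclear:ghost}), $\mathcal{C} = G(\mathcal{D})$, so $\mathcal{C}^* = G(\mathcal{D})^* = \Lambda(\mathcal{D}^*)$, which is nuclear by clause (\ref{nuclear:lawrence}) applied to the nuclear complex $\mathcal{D}^*$. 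For case (\ref{nuclear:disjoint}), $\mathcal{C} = \cone^p(\Delta_m \sqcup \Delta_n)$, and since coning commutes with duality, $\mathcal{C}^* = \cone^p\big((\Delta_m \sqcup \Delta_n)^*\big) = \cone^p(D_{m,n})$; if $m,n \ge 1$ this is clause (\ref{nuclear:dual}), and if say $n = 0$ then $D_{m,0}$ has a ghost vertex and one checks $\cone^p(D_{m,0})$ is a ghost-vertex extension of a cone over $\partial\Delta_m$ — here I would need to either absorb the degenerate case into clause (\ref{nuclear:ghost}) plus (\ref{nuclear:dual}), or simply observe that $\Delta_m \sqcup \Delta_0$ is itself a Lawrence-type complex so its cone is handled by clauses (\ref{nuclear:lawrence})/(\ref{nuclear:ghost}). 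Case (\ref{nuclear:dual}) is symmetric: $\cone^p(D_{m,n})^* = \cone^p(\Delta_m \sqcup \Delta_n)$, which is clause (\ref{nuclear:disjoint}). Case (\ref{nuclear:simplex}): $\Delta_k^*$ is again of the form $\Delta_j$ (with the conventions $\Delta_{-1}^* = \Delta_{-2}$ on the appropriate ground set, etc.), so it falls under clause (\ref{nuclear:simplex}).

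The main obstacle I anticipate is bookkeeping at the degenerate boundaries — making sure the ground sets match under Alexander duality (duality is defined relative to a fixed vertex set $[n]$, so $G$ and $\Lambda$ interact with the ambient vertex count), and handling the cases where a disjoint union of simplices has a $0$-dimensional piece so that $D_{m,n}$ with $n = 0$ would illegitimately be invoked in clause (\ref{nuclear:dual}). The cleanest fix is to note that $\Delta_m \sqcup \Delta_0 = \Lambda$ of something (it has a facet missing exactly one vertex), hence already covered without needing $D_{m,0}$, so the $m,n\ge 1$ restriction in (\ref{nuclear:dual}) causes no loss. Everything else is a routine unwinding of the three commutation identities, so the proof is short; the only content is checking that each of the five clauses maps into one of the five clauses under $(\cdot)^*$.
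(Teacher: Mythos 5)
Your proposal is correct and takes essentially the same route as the paper: the paper's proof is a terse three-sentence observation that clauses (\ref{nuclear:lawrence}) and (\ref{nuclear:ghost}) are dual, clauses (\ref{nuclear:disjoint}) and (\ref{nuclear:dual}) are dual, and clause (\ref{nuclear:simplex}) maps to itself via $\Delta_k^* = \Delta_{-2}$, which is exactly the case analysis you spell out. Your extra care with the degenerate boundary (the $n=0$ case, absorbed into clause (\ref{nuclear:ghost}) since $D_{m,0}$ has a ghost vertex) is precisely the point the paper addresses in the remark preceding the proposition, so you have reproduced the paper's argument faithfully.
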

\begin{proof}
	Possibilities (\ref{nuclear:lawrence}) and (\ref{nuclear:ghost}) are dual to each other,
	as are (\ref{nuclear:disjoint}) and (\ref{nuclear:dual}).
	On $k$ vertices, $\Delta_k$ and $\Delta_{-2}$ are dual to each other.
\end{proof}

We now state our main result.

\begin{thm}\label{main}
	Let $\mathcal{C}$ be a simplicial complex.
	Then the following are equivalent
	\begin{enumerate}
		\item\label{unimodular} $\mathcal{C}$ is unimodular
		\item\label{bavoiding} $\mathcal{C}$ is $\beta$-avoiding
		\item\label{nuclear} $\mathcal{C}$ is nuclear.
	\end{enumerate}
\end{thm}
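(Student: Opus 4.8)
The plan is to prove $(\ref{unimodular}) \Rightarrow (\ref{bavoiding}) \Rightarrow (\ref{nuclear}) \Rightarrow (\ref{unimodular})$. The implication $(\ref{nuclear}) \Rightarrow (\ref{unimodular})$ is already essentially established: each of the operations $\cone^p(\cdot)$, $G(\cdot)$, $\Lambda(\cdot)$, and $(\cdot)^*$ preserves unimodularity by Propositions \ref{cone}, \ref{ghost}, \ref{lawrence}, and \ref{dual_uni}, and the seed complexes $\Delta_m \sqcup \Delta_n$, $D_{m,n}$, and $\Delta_k$ are unimodular by Proposition \ref{disjointsimplices} (and its dual), so any nuclear complex is unimodular. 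The implication $(\ref{unimodular}) \Rightarrow (\ref{bavoiding})$ is also immediate: the complexes listed in Proposition \ref{prop:badcomplexes} are non-unimodular, and Proposition \ref{minor} says unimodularity is inherited by minors, so a unimodular complex cannot contain any of them as a minor.

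The real content is $(\ref{bavoiding}) \Rightarrow (\ref{nuclear})$, and I would prove it by a multi-layered induction on the number of vertices of $\mathcal{C}$. First, dispose of degenerate cases: if $\calc$ has a ghost vertex $v$, then $\calc = G(\calc \setminus v)$ and $\calc \setminus v$ is $\beta$-avoiding with fewer vertices, so induction applies; likewise if $\calc$ has a cone vertex, strip it off (legitimate since $\cone^p$ commutes with everything by Proposition \ref{commutes} and since nuclear is closed under adding cone vertices). Also handle $\calc = \Delta_k$ directly. So assume $\calc$ has no ghost or cone vertices and is not a simplex. Next, split on whether $\calc$ has a big facet: if it does, then $\calc = \Lambda(\link_{n+1}(\calc))$, and $\link_{n+1}(\calc)$ is $\beta$-avoiding (it's a minor) with fewer vertices, so by induction it is nuclear and hence so is $\calc$. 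The remaining case is the crux: $\calc$ is $\beta$-avoiding, has no ghost vertex, no cone vertex, and no big facet, and we must show $\calc = \cone^0(\Delta_m \sqcup \Delta_n)$ or $\cone^0(D_{m,n})$ — i.e.\ $\calc$ is $\Delta_m \sqcup \Delta_n$ or $D_{m,n}$ outright.

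Here I would invoke the $1$-skeleton classification, Lemma \ref{1skel}: $G$ is $K_N$, or two cliques glued along a common clique, or an iterated cone over $C_4$. The iterated-cone-over-$C_4$ case forces $\calc = \cone^p(C_4) = \cone^p(D_{1,1})$ by Proposition \ref{square}, but since $\calc$ has no cone vertex this means $\calc = C_4 = D_{1,1}$, done. If $G$ is a disjoint union of two cliques (empty glue clique), Proposition \ref{disconnected} gives $\calc = \Delta_m \sqcup \Delta_n$, done. The hard subcase is when $G$ is two cliques $K_{m+1}$, $K_{n+1}$ glued along a nonempty common clique: I would argue that the common clique, being in every facet's ``core'', together with the no-cone-vertex hypothesis and $\beta$-avoidance forces a specific structure. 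One wants to show that in this situation $\calc^*$ has no ghost vertices (no big facet in $\calc$) and is $\beta$-avoiding (Proposition \ref{bdual}), then peel a vertex off $\calc^*$ via induction and dualize back — this is where Propositions \ref{minimality}, \ref{iteratedcone}, \ref{ksimplex}, and \ref{linkdual} should combine to pin down that $\calc$ must be $D_{m,n}$ (or reduce to an already-handled form). I expect \textbf{this gluing subcase — organizing the induction so that taking links, deletions, and Alexander duals all stay inside the $\beta$-avoiding class while strictly decreasing a complexity measure — to be the main obstacle}, since one must carefully ensure the dual operation does not reintroduce ghost or cone vertices and that the inductive hypothesis is being applied to a genuinely smaller complex; the supporting propositions of Section \ref{sec:bavoid} are exactly the tools designed to push this through.
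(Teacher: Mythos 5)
Your treatment of $(\ref{nuclear}) \Rightarrow (\ref{unimodular})$ and $(\ref{unimodular}) \Rightarrow (\ref{bavoiding})$ matches the paper and is fine. Your reduction at the start of $(\ref{bavoiding}) \Rightarrow (\ref{nuclear})$ is also sound: after stripping ghost vertices, cone vertices, big facets, and the simplex case, the only remaining nuclear possibilities are $\Delta_m \sqcup \Delta_n$ and $D_{m,n}$ (both with $m,n \ge 1$), so the target is correctly identified. However, there is a genuine gap precisely where you flag it, and it is not a detail but the technical heart of the whole theorem.

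Concretely: after your reductions you must show that a $\beta$-avoiding complex $\calc$ with no ghost vertex, no cone vertex, no big facet, and not a simplex is exactly $\Delta_m \sqcup \Delta_n$ or $D_{m,n}$. Lemma~\ref{1skel} tells you the $1$-skeleton is $K_N$, two cliques glued along a common clique, or an iterated cone over $C_4$, but you only actually dispose of the iterated-cone-over-$C_4$ case and the disjoint-clique case. You fold $K_N$ and the nonempty-glue case into one ``hard subcase'' and sketch only an intention (``peel a vertex off $\calc^*$ via induction and dualize back''), not an argument. This is exactly where the difficulty lies: knowing the $1$-skeleton leaves enormous freedom for the higher faces. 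For instance, $D_{m,n}$ with $m,n \ge 2$ has $1$-skeleton $K_{m+n+2}$, which is case (a) of Lemma~\ref{1skel}, and you give no mechanism for recovering the facet structure of $D_{m,n}$ from the combination of $K_N$ $1$-skeleton, $\beta$-avoidance, and the absence of ghost/cone/big-facet. There is also no verification that, when you dualize into $\calc^*$ and induct, the quantity you are inducting on actually decreases.

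The paper organizes the induction differently and this is not cosmetic. Rather than first stripping outer structure and then classifying the residue by $1$-skeleton, it chooses a vertex $v$ joined to every other vertex (such a $v$ exists precisely because Lemma~\ref{1skel} rules out the disconnected and suspension-free-$C_4$ cases once Propositions~\ref{square} and~\ref{disconnected} are applied), takes $\link_v(\calc)$, applies the induction hypothesis to conclude $\link_v(\calc)$ is nuclear, and then case-splits on the five ways $\link_v(\calc)$ can be nuclear (Definition~\ref{defn:nuclear}). Four of those cases are quick. The fifth — $\link_v(\calc) = \cone^p(D_{m,n})$ — is the real work and is isolated as Lemma~\ref{case4}, which in turn splits the $p = 0$ subcase into six possible shapes of $(\calc \setminus v)^*$ and calls on Propositions~\ref{messy1}, \ref{m'n'}, \ref{n}, \ref{arbitrary}, and~\ref{minimality}. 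Your plan replaces this with a single ``analyze the $1$-skeleton and go to the dual'' step, but does not supply an argument of comparable depth to take the place of that machinery, so as written the proof does not go through.
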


{ We defer the proof of Theorem \ref{main} until the end of the section,
but we give a roadmap here.
It is immediate from results in previous sections that nuclear complexes are unimodular
and that unimodular complexes are $\beta$-avoiding.
\\
\indent
We show that any $\beta$-avoiding complex $\calc$ is nuclear by 
choosing a particular vertex $v$ of $\calc$ and using induction on the number of vertices to conclude that $\link_v(\calc)$ is nuclear.
From here, we have five cases to consider - one for each of the ways that $\link_v(\calc)$ can be nuclear.
In four of these cases, it is relatively easy to show that $\calc$ is nuclear.
The difficulty lies in the case where $\link_v(\calc) = D_{m,n}$, which is handled in Lemma \ref{case4}.
The proof of Lemma \ref{case4} is split into two main parts
and the second part is further split into six cases.
Proposition \ref{messy1} helps with the first part.
Propositions \ref{m'n'}, \ref{n}, and \ref{arbitrary} handle the hard cases in the second part.
}

\begin{prop}\label{messy1}
	Let $\mathcal{C}$ be a $\beta$-avoiding simplicial complex.
	Assume that all proper minors of $\mathcal{C}$ are nuclear.
	Assume $\mathcal{C}$ has a vertex $v$ such that $\mathcal{C}\setminus v = \cone^p(\Delta_m \sqcup \Delta_n)$
	with $m,n \ge 1$ and $p \ge 1$.
	Assume $\mathcal{C}$ is $C_4$-free.
	Then $\mathcal{C} = \cone^{p+1}(\Delta_m \sqcup \Delta_n)$,  $\mathcal{C} = \cone^p(\Delta_{m+1}\sqcup \Delta_n)$, or $\mathcal{C} = \cone^p(\Delta_{m}\sqcup \Delta_{n+1})$.
\end{prop}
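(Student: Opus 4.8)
The plan is to analyze how the new vertex $v$ attaches to $\mathcal{C}\setminus v = \cone^p(\Delta_m\sqcup\Delta_n)$. Write $\mathcal{C}\setminus v$ as having cone vertices $w_1,\dots,w_p$ over the disjoint union $\Delta_m\sqcup\Delta_n$; call the two simplex vertex sets $M$ (with $|M|=m+1$) and $N$ (with $|N|=n+1$). First I would record the easy structural facts forced by $\beta$-avoidance: since $\mathcal{C}$ has no independent set of size $3$ (else $\partial\Delta_1\sqcup\{v\}$ is induced), $v$ cannot be non-adjacent to a vertex of $M$ \emph{and} a vertex of $N$ simultaneously; and since $\mathcal{C}$ is $C_4$-free, the non-neighborhood behavior of $v$ is tightly constrained. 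The goal is to show $v$ is forced into one of exactly three configurations: a new cone vertex (adjacent to everything), or a new vertex of the $\Delta_m$ (adjacent to all of $N$, the cone vertices, and all of $M$, forming a bigger simplex on $M\cup\{v\}$), or symmetrically a new vertex of the $\Delta_n$.

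The key steps, in order. (1) Show $v$ is adjacent to every cone vertex $w_i$: if not, say $v\not\sim w_1$, then since $w_1$ is adjacent to everything else, $\link_{w_1}(\mathcal{C})$ would not contain $v$, but I instead use that $v$ must be adjacent to something in $M\cup N$ (no size-$3$ independent set), and then $v, w_1$ together with suitable vertices produce either $P_4$ or $C_4$; more cleanly, apply Proposition~\ref{ksimplex} or direct case analysis. (2) Apply the induction hypothesis: all proper minors of $\mathcal{C}$ are nuclear, so in particular each $\link_{w_i}(\mathcal{C})$ and each $\mathcal{C}\setminus w_i$ is nuclear; and $\link_v(\mathcal{C})$, being a proper minor, is nuclear — this is the real leverage, since $\link_v(\mathcal{C})$ contains $\link_v(\mathcal{C}\setminus(\text{stuff}))$ and its shape is heavily restricted. (3) Use the $C_4$-free hypothesis to rule out $v$ being adjacent to a proper nonempty subset of $M$ (or of $N$): if $v\sim a$, $v\not\sim a'$ for $a,a'\in M$, and $v\sim b$ for some $b\in N$ (which must happen, by the no-independent-triple condition applied to $\{v,a',b'\}$ for $b'\in N$ with care), then $a,v,b$ plus a cone vertex or plus $a'$ gives a $4$-cycle or a $P_4$. (4) Conclude: either $v$ is adjacent to all of $M\cup N$ (plus all $w_i$), giving $\cone^{p+1}$ after checking $v$ actually forms the cone — i.e. $v$ lies in every facet, which follows from $\beta$-avoidance by an argument like Proposition~\ref{ksimplex} applied to the $\partial\Delta$'s sitting inside; or $v$ is adjacent to all of $M$, none of $N$ (and all $w_i$), and then I must show $M\cup\{v\}$ is a face, i.e. $\mathcal{C}=\cone^p(\Delta_{m+1}\sqcup\Delta_n)$. (5) In the last sub-case, show $M\cup\{v\}\cup\{w_1,\dots,w_p\}$ is a facet: any minimal non-face among these would, together with a vertex of $N$, induce $\partial\Delta_k\sqcup\{\text{pt}\}$, contradicting $\beta$-avoidance.

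The main obstacle I expect is step (4)–(5): after pinning down the \emph{graph} (1-skeleton) of $\mathcal{C}$ via the $C_4$-free and no-independent-triple conditions, one still has to promote this to a statement about all the higher faces of $\mathcal{C}$, and there is genuine room for $\mathcal{C}$ to have fewer faces than the claimed nuclear complex while still having the right $1$-skeleton. The tool for this is the forbidden minor $\partial\Delta_k\sqcup\{v\}$ together with links: taking links at the cone vertices $w_i$ and at vertices of $M$ reduces to the base cases $D_{m',n'}=C_4$ and small simplices, where Propositions~\ref{disconnected}, \ref{ksimplex}, and \ref{minimality} apply. A secondary subtlety is handling the degenerate values $m=1$ or $n=1$ (where $\partial\Delta_m$ is just two points and some of the "triangle" arguments degenerate to edge arguments), which I would dispatch by invoking Proposition~\ref{square} directly since $\mathcal{C}$ being $C_4$-free then forces the relevant configuration. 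Throughout, Proposition~\ref{commutes} lets me freely strip off the cone vertices $w_1,\dots,w_p$ and reinstate them at the end.
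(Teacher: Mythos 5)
Your high-level plan — pin down the $1$-skeleton via the no-independent-triple, no-$P_4$, and no-$C_4$ constraints, then lift to the full face structure using forbidden minors — is the right spirit, and your step (1)--(3) analysis of the $1$-skeleton is essentially correct. But there is a concrete gap in step (5), and you have correctly diagnosed where it is without actually closing it.

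The specific problem: you claim that a minimal non-face $S \subseteq M\cup\{v\}\cup\{w_1,\dots,w_p\}$ together with a vertex $b\in N$ would induce $\partial\Delta_k\sqcup\{b\}$. This only works when $S$ avoids all cone vertices $w_i$, because each $w_i$ is adjacent (indeed cone) to all of $N$, so $b$ is not isolated in the complex induced on $S\cup\{b\}$ when $S$ meets $\{w_1,\dots,w_p\}$. Since any minimal non-face of $M\cup\{v\}\cup\{w_1,\dots,w_p\}$ must contain $v$ and could perfectly well contain some $w_i$, your forbidden-minor argument does not rule these out. So you establish $M\cup\{v\}$ is a face but not the full claim $M\cup\{v\}\cup\{w_1,\dots,w_p\} \in \mathcal{C}$. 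Your parenthetical suggestion (``taking links at the cone vertices $w_i$ ... reduces to the base cases'') gestures at the fix but is not carried out, and reducing to a base case after linking does not by itself determine $\mathcal{C}$ because two complexes with the same links can differ.

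The paper's proof closes exactly this gap by doing an internal induction on $p$ rather than a static $1$-skeleton argument. For $p = 1$ it observes that $\link_u(\mathcal{C})$ (link at the unique cone vertex) is a ghost-free nuclear complex which becomes disconnected upon deleting $v$, and the only such nuclear complexes are $\Delta_{m+1}\sqcup\Delta_n$ and $\cone^1(\Delta_m\sqcup\Delta_n)$, which pins down $\mathcal{C}$. For $p\geq 2$ it applies the $p-1$ case to both $\link_{u_p}(\mathcal{C})$ and $\mathcal{C}\setminus u_p$ and then argues that the two answers must agree; the ``mixed'' configuration is eliminated by exhibiting an induced $J_1^*$, a forbidden minor your sketch never invokes. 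So while your diagnosis of the difficulty is sound, the proposal as written does not supply a valid argument for the heart of the proposition, and a correct repair essentially reproduces the paper's inductive structure.
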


\begin{proof}
	We induct on $p$.
	For the base case, take $p = 1$.
	If we let $u$ denote the cone vertex in $\mathcal{C}\setminus v$
	then $v$ must connect to one of the simplices in the $\Delta_m\sqcup \Delta_n \subset \mathcal{C}\setminus u$.
	Otherwise we have an independent set with three vertices induced in $\mathcal{C}\setminus u$ and this complex is not nuclear.
	If $v$ connects to only one such simplex, then $v$ must also connect to $u$ to avoid inducing $P_4$ which is not nuclear.
	If $v$ connects to both such simplices, and $v$ does not connect to $u$, then $\mathcal{C}$
	has $C_4$ induced which contradicts our hypothesis.
	{ So assume $v$ connects to $u$ and to at least one simplex.}
	Since $u$ is a cone vertex in $\mathcal{C}\setminus v$, $\link_u(\mathcal{C})$ has no ghost vertices and by hypothesis, it is nuclear.
	Furthermore, $\link_u(\mathcal{C}\setminus v) = \Delta_{m}\sqcup \Delta_n$
	and so $\link_u(\mathcal{C})$ is either $\Delta_{m+1}\sqcup \Delta_n$
	or $\cone^1(\Delta_{m}\sqcup\Delta_{n})$
	(note that these are the only nuclear complexes that become disconnected upon removing a vertex).
	In the first case, $\mathcal{C}= \cone^1(\Delta_{m+1}\sqcup \Delta_n)$
	and in the second case $\mathcal{C}=\cone^2(\Delta_{m}\sqcup\Delta_{n})$.
	\\
	\indent
	Now assume $p \ge 2$.
	Let $u_1,\dots,u_p$ denote the cone vertices of $\mathcal{C}\setminus v$.
	Let $M,N$ denote the vertex sets of the $\Delta_m,\Delta_n$ respectively.
	Then $v$ must connect to at least one of $M$ and $N$ to avoid inducing an independent set on three vertices.
	Furthermore, $v$ must connect to each $u_i$ -
	if $v$ only connects to one of $M$ or $N$, then this is required to avoid $P_4$,
	and if $v$ connects to both then
	we need this to ensure that $\mathcal{C}$ is $C_4$-free.
	Furthermore, the set $\{u_1,\dots,u_p,v\}$ is a facet of $\mathcal{C}$.
	This is clear when $p = 1$ since otherwise $v$ doesn't connect to $u_1$.
	Induction on $p$ gives that $\{u_1,\dots,u_{p-1},v\}$ is a facet of $\link_{u_p}(\mathcal{C})$
	and therefore $\{u_1,\dots,u_p,v\}$ is  a facet of $\mathcal{C}$.
	\\
	\indent
	Now, we can see that $\link_{u_p}(\mathcal{C})$ has no ghost vertices and is $C_4$-free.
	Furthermore, $\link_{u_p}(\mathcal{C})\setminus v$ and $\calc \setminus \{u_p, v\}$
	are both equal to $\cone^{p-1}(\Delta_m \sqcup \Delta_n)$.
	So by induction on $p$, each of $\link_{u_p}(\mathcal{C})$ and $(\mathcal{C}\setminus u_p)$
	can either be $\cone^p(\Delta_m \sqcup \Delta_n)$ or $\cone^{p-1}(\Delta_{m+1}\sqcup \Delta_n)$.
	If $\link_{u_p}(\mathcal{C}) = \mathcal{C}\setminus u_p$, then $u_p$ is a cone vertex in $\mathcal{C}$.
	So if both equal $\cone^p(\Delta_m \sqcup \Delta_n)$ then
	$\mathcal{C} = \cone^{p+1}(\Delta_m \sqcup \Delta_n)$.
	If both equal $\cone^{p-1}(\Delta_{m+1}\sqcup \Delta_n)$
	then $\mathcal{C} = \cone^{p}(\Delta_{m+1}\sqcup \Delta_n)$.
	\\
	\indent
	Since $\link_{u_p}(\mathcal{C}) \subseteq \mathcal{C}\setminus u_p$,
	it is impossible to have $\link_{u_p}(\mathcal{C}) = \cone^p(\Delta_m \sqcup \Delta_n)$
	and $\mathcal{C}\setminus u_p = \cone^{p-1}(\Delta_{m+1} \sqcup \Delta_n)$.
	It is also impossible to have $\mathcal{C}\setminus u_p = \cone^p(\Delta_m \sqcup \Delta_n)$
	and $\link_{u_p}(\mathcal{C}) = \cone^{p-1}(\Delta_{m+1} \sqcup \Delta_n)$.
	For if this were the case,
	then $v$ would be a cone vertex in $\mathcal{C}\setminus u_p$
	and it would be part of the $\Delta_{m+1}$ in $\link_{u_p}(\mathcal{C})$.
	Since $n \ge 1$, $\# N \ge 2$, so given $b_1,b_2 \in N$ and $a \in M$ the complex induced
	on $a,b_1,b_2,u_p,v$ has facets $\{v,a,u_p\},\{v,b_1,b_2\},\{u_p,b_1,b_2\}$.
	This is $J_1^*$, and so $\mathcal{C}$ is not $\beta$-avoiding.
\end{proof}

{
\begin{prop}\label{m'n'}
	Let $\mathcal{C}$ be the simplicial complex on $q$ vertices
	with a vertex $v$ such that $\link_v(\mathcal{C}) = D_{m,n}$ with $m,n \ge 1$
	and $(\mathcal{C}\setminus v)^* = G^{m+n-m'-n'}(\Delta_{m'} \sqcup \Delta_{n})$ with $-1 \le m' < m$ and $0 \le n' \le n$.
	Unless $m' = -1$ and $n' = n$, $\mathcal{C}$ has a proper minor that is not nuclear.
\end{prop}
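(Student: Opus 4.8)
The plan is to pass to the Alexander dual, where the hypotheses become transparent, pin the complex down exactly, and then exhibit a forbidden subcomplex.

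\emph{Dualizing.} By Proposition~\ref{linkdual}, $D_{m,n}=\link_v(\mathcal{C})=(\mathcal{C}^*\setminus v)^*$, so $\mathcal{C}^*\setminus v=(D_{m,n})^*=\Delta_m\sqcup\Delta_n$; applying Proposition~\ref{linkdual} again, now to $\mathcal{C}^*$, gives $\link_v(\mathcal{C}^*)=(\mathcal{C}\setminus v)^*=G^{m+n-m'-n'}(\Delta_{m'}\sqcup\Delta_{n'})$, and comparing vertex counts forces $q=m+n+3$. The minors of $\mathcal{C}^*$ are precisely the Alexander duals of the minors of $\mathcal{C}$ (this is contained in the proof of Proposition~\ref{bdual}), and the class of nuclear complexes is closed under Alexander duality (Proposition~\ref{dualclosed}); hence it suffices to produce a proper non-nuclear minor of $\mathcal{D}:=\mathcal{C}^*$.

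\emph{Identifying $\mathcal{D}$.} A simplicial complex is determined by its deletion and its link at a single vertex, so $\mathcal{D}$ is determined by $\mathcal{D}\setminus v=\Delta_m\sqcup\Delta_n$ — with $M,N$ the vertex sets of the two simplices, $|M|=m+1$, $|N|=n+1$ — together with $\link_v(\mathcal{D})=G^{m+n-m'-n'}(\Delta_{m'}\sqcup\Delta_{n'})$. Since every face of $\Delta_m\sqcup\Delta_n$ lies entirely in $M$ or entirely in $N$, and $\link_v(\mathcal{D})\subseteq\mathcal{D}\setminus v$, the non-ghost vertices of $\link_v(\mathcal{D})$ form a simplex $M'\subseteq M$ with $|M'|=m'+1$ together with a simplex $N'\subseteq N$ with $|N'|=n'+1$, where $M'=\emptyset$ when $m'=-1$. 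Chasing maximal faces, the facets of $\mathcal{D}$ are exactly: $M$; $N$ when $n'<n$ (when $n'=n$ one has $N'=N$ and $N$ is absorbed into $N'\cup\{v\}$); $M'\cup\{v\}$ when $m'\ge 0$; and $N'\cup\{v\}$. From this list one can decide, for any small set of vertices, exactly which of its subsets are faces of $\mathcal{D}$.

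\emph{The forbidden subcomplex.} First suppose $m'\ge 0$. Pick $a_1\in M'$, $a_2\in M\setminus M'$ (nonempty since $m'<m$), and $b_1\in N'$ (nonempty since $n'\ge 0$). Against the facet list one checks that among $\{a_2,a_1,v,b_1\}$ the only faces are $\emptyset$, the four vertices, and the three edges $\{a_2,a_1\}\subseteq M$, $\{a_1,v\}\subseteq M'\cup\{v\}$, $\{v,b_1\}\subseteq N'\cup\{v\}$; so the induced subcomplex of $\mathcal{D}$ on these four vertices is the path $P_4$. Next suppose $m'=-1$ and $n'<n$; pick $a\in M$ and $b\in N\setminus N'$ (nonempty since $n'<n$). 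Then $v$ lies in no facet meeting $M$, and $b\notin N'$, so none of the pairs $\{a,v\}$, $\{a,b\}$, $\{v,b\}$ is a face, and the induced subcomplex of $\mathcal{D}$ on $\{a,v,b\}$ is the independent set on three vertices, $\partial\Delta_1\sqcup\{v\}$. In both situations the induced subcomplex is one of the minimal nonunimodular complexes of Proposition~\ref{prop:badcomplexes}, hence is not unimodular and therefore not nuclear; and it uses fewer than $q=m+n+3$ vertices, so it is a proper minor of $\mathcal{D}$. These two situations exhaust all cases other than $m'=-1$, $n'=n$; there $M\setminus M'=M$ and $N\setminus N'=\emptyset$, both constructions fail, and indeed $\mathcal{D}=\Delta_m\sqcup\Delta_{n+1}$ is nuclear — in agreement with the statement.

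\emph{Main obstacle.} The substantive step is the middle one: showing that $\mathcal{D}$ is forced to have exactly the listed facets, and in particular that the two simplices comprising $\link_v(\mathcal{D})$ must respect the $M$/$N$ partition of $\mathcal{D}\setminus v$. Once the facet list is in hand the forbidden subcomplexes drop out immediately, so the first and last steps are routine bookkeeping.
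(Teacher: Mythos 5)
Your proof is correct, but it takes a genuinely different route from the paper's, and it is worth contrasting the two.

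The paper's proof stays on the primal side. It picks a non-ghost vertex $x \in N$ of $(\mathcal{C}\setminus v)^*$, explicitly describes the facets of $\link_x(\mathcal{C})$, and then shows by exhaustion that $\link_x(\mathcal{C})$ does not match any of the five nuclear forms: it is not a Lawrence lifting, has no ghost vertices, is not a proper cone, is not a simplex or disjoint union of simplices (too many facets), and is not $D_{k,\ell}$ (this last step is itself a two-case argument using the ghost/non-ghost structure of $(\mathcal{C}\setminus v)^*$). You instead dualize to $\mathcal{D} = \mathcal{C}^*$, reconstruct $\mathcal{D}$ from $\mathcal{D}\setminus v = \Delta_m \sqcup \Delta_n$ and $\link_v(\mathcal{D}) = G^{m+n-m'-n'}(\Delta_{m'}\sqcup\Delta_{n'})$, read off its facet list $\{M,\, N \text{ (if } n'<n),\, M'\cup\{v\} \text{ (if } m'\ge 0),\, N'\cup\{v\}\}$, and then exhibit an explicit induced $P_4$ (when $m'\ge 0$) or an induced $\partial\Delta_1 \sqcup \{v\}$ (when $m'=-1,\ n'<n$) inside $\mathcal{D}$; Propositions \ref{bdual} and \ref{dualclosed} transfer this back to a proper non-nuclear minor of $\mathcal{C}$. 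Your case split ($m'\ge 0$ versus $m'=-1,\ n'<n$) also differs from the paper's ($n'=n,\ m'\ge 0$ versus $n'<n$), but both partitions cover the non-excluded parameter region. What your approach buys is concreteness: instead of proving an abstract negative ("not of any nuclear form"), you land directly on a complex from the excluded-minor list of Proposition \ref{prop:badcomplexes}, which makes the verification essentially mechanical once the facet list is in hand. The one point you flagged as substantive --- that the two simplices of $\link_v(\mathcal{D})$ lie one inside $M$ and one inside $N$ --- is indeed implicitly baked into the statement via the ``without loss of generality'' labeling inherited from Lemma \ref{case4} (and the constraints $m'<m$, $n'\le n$); the paper's own proof makes the same tacit assumption when it asserts that some $x\in N$ is a non-ghost of $(\mathcal{C}\setminus v)^*$, so this is a wash between the two arguments. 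Finally, the $\Delta_n$ in the statement is a typo for $\Delta_{n'}$, which you silently (and correctly) corrected.
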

\begin{proof}
	Let $M,N$ denote the sets of vertices such that
	in $(\link_v(\mathcal{C}))^*$, the complex induced on $M$ is $\Delta_m$
	and the complex induced on $N$ is $\Delta_n$.
	Then some vertex $x \in N$ is a non-ghost in $(\mathcal{C}\setminus v)^*$.
	We show that $\link_x(\mathcal{C})$ is not nuclear.
	\\
	\indent
	First we describe the facets of $\link_x(\mathcal{C})$.
	We claim that the facets of $\link_x(\mathcal{C})$
	are precisely the sets containing $N-1-2$ vertices,
	where the omitted vertex pair is one of the following
	\begin{enumerate}
		\item some $a \in M$ and some $b \in N\setminus\{x\}$ or
		\item $v$ and some vertex $g$ that is a ghost in $(\mathcal{C}\setminus v)^*$.
	\end{enumerate}
	We now prove the claim.
	The facets of $\link_x(\mathcal{C})$ that contain $v$
	are the sets whose complements are pairs of the first type since $\link_v(\mathcal{C}) = D_{m,n}$.
	The facets of $\link_x(\mathcal{C})$ that do not contain $v$
	are the sets whose complements are minimal non-faces of $(\mathcal{C}\setminus v)^*$
	that do not become faces of $\link_x(\mathcal{C})$ upon adding $v$.
	These minimal non-faces of $(\mathcal{C}\setminus v)^*$ are the ghost vertices.
	Therefore the facets of $\link_x(\calc)$ that do not contain $v$ are the sets of the second type.
	\\
	\indent
	We now show that $\link_x(\mathcal{C})$ is not nuclear.
	The complex $\link_x(\mathcal{C})$ cannot be $\Lambda(\mathcal{D})$
	for any complex $\mathcal{D}$ since no facet lacks fewer than two vertices.
	Nor can $\link_x(\mathcal{C})$ have any ghost vertices -
	$m \ge 1$ implies $\#M \ge 2$ and so every vertex in $M$ is in a facet of the first type,
	every vertex of $N$ is in a facet of the second type, and $v$ is in every facet of the first type.
	It is clear that no vertex is in every facet,
	so $\link_x(\mathcal{C})$ cannot be $\cone^p(D_{k,l})$ nor $\cone^p(\Delta_k \sqcup \Delta_l)$
	for any $p \ge 1$.
	It is clear that $\link_x(\mathcal{C})$ has more than 2 facets, so $\link_x(\mathcal{C})$ cannot be $\Delta_k \sqcup \Delta_l$,
	nor $\Delta_k$.
	\\
	\indent
	The only case left to check is $\link_x(\mathcal{C}) = D_{k,l}$.
	If this is the case then we can partition the vertices of $\link_x(\mathcal{C})$ into disjoint sets $C_1,C_2$ such that
	the facets of $\link_x(\mathcal{C})$ are precisely the subsets of $C_1 \cup C_2$ that omit exactly one from each $C_1$ and $C_2$.
	\\
	\indent
	Assume $n' = n$ and $m' \ge 0$.
	Let $g\in M$ be a ghost vertex and without loss of generality assume $g \in C_1$.
	Facets of the first type imply $M \subseteq C_1$.
	Facets of the second type imply $v \in C_2$.
	Since $m' \ge 0$, there exists some $a \in M$ that is not a ghost in $(\calc\setminus v)^*$.
	Snd so $(C_1\cup C_2)\setminus\{a,v\}$ is a facet of $\link_x(\calc)$.
	But this does not fit our earlier description of the facets of $\link_x(\calc)$.
	\\
	\indent
	Now assume $-1\le m'<m$ and $0 \le n'<n$.
	Then there exist $g' \in M$ and $g'' \in N$ that are ghosts in $(\calc\setminus v)^*$.
	But this contradicts our earlier description of the facets of $\link_x(\calc)$ -
	facets of the first kind imply that $g',g''$ are in different $C_i$s
	and facets of the second kind imply that they are in the same $C_i$.
\end{proof}
}

\begin{prop}\label{n}
	Let $\mathcal{C}$ be a simplicial complex on $q$ vertices.
	Let $v$ be a vertex in $\mathcal{C}$ such that $\link_v(\mathcal{C}) = D_{m,n}$ with $m,n \ge 1$.
	Assume $(\mathcal{C}\setminus v)^* = G^{m+1}(\Delta_{n})$.
	Then $\mathcal{C} = D_{m,n+1}$.
\end{prop}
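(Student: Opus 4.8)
The plan is to pass to Alexander duals: set $\mathcal{D} := \mathcal{C}^*$, reconstruct $\mathcal{D}$ from its deletion and link at $v$, show that $\mathcal{D} = \Delta_m\sqcup\Delta_{n+1}$, and then conclude $\mathcal{C} = \mathcal{D}^* = (\Delta_m\sqcup\Delta_{n+1})^* = D_{m,n+1}$. To dualize the hypotheses, I would use Proposition \ref{linkdual}, which gives $\link_v(\mathcal{E}) = (\mathcal{E}^*\setminus v)^*$ for any complex $\mathcal{E}$ on $[q]$. Applying this with $\mathcal{E} = \mathcal{C}$ and taking duals (Alexander duality is an involution) gives $\mathcal{D}\setminus v = (\link_v(\mathcal{C}))^* = D_{m,n}^*$, which equals $\Delta_m\sqcup\Delta_n$ since $D_{m,n} = (\Delta_m\sqcup\Delta_n)^*$. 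Applying it with $\mathcal{E} = \mathcal{D}$ gives $\link_v(\mathcal{D}) = (\mathcal{C}\setminus v)^* = G^{m+1}(\Delta_n)$. Write the $(q-1)$-element ground set $[q]\setminus\{v\}$ as the disjoint union $M\sqcup N$ of the two facets of $\mathcal{D}\setminus v = \Delta_m\sqcup\Delta_n$, so $|M| = m+1$ and $|N| = n+1$, and let $N'$ be the unique maximal face of $\link_v(\mathcal{D}) = G^{m+1}(\Delta_n)$, so $|N'| = n+1$ and the other $m+1$ vertices are ghosts of that complex.

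Next I would reconstruct $\mathcal{D}$ from this local data. A simplicial complex on $[q]$ is the union of its faces avoiding $v$ — these are exactly the faces of $\mathcal{D}\setminus v$ — together with its faces containing $v$ — these are exactly the sets $G\cup\{v\}$ with $G\in\link_v(\mathcal{D})$ — so
\[
  \mathcal{D} = \{\,\textnormal{subsets of } M\,\}\ \cup\ \{\,\textnormal{subsets of } N\,\}\ \cup\ \{\,\textnormal{subsets of } \{v\}\cup N'\,\}.
\]
Since every subset of $N'$ occurs as a face of $\mathcal{D}$ not containing $v$, the set $N'$ is a face of $\Delta_m\sqcup\Delta_n$, hence $N'\subseteq M$ or $N'\subseteq N$; comparing cardinalities, either $N' = N$, or $m = n$ and $N' = M$. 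If $N' = N$ then the third block of the display absorbs the subsets of $N$, so $\mathcal{D}$ has facets $M$ and $\{v\}\cup N$, i.e.\ $\mathcal{D} = \Delta_m\sqcup\Delta_{n+1}$. If instead $m = n$ and $N' = M$ then the third block absorbs the subsets of $M$, so $\mathcal{D}$ has facets $N$ and $\{v\}\cup M$, i.e.\ $\mathcal{D} = \Delta_n\sqcup\Delta_{m+1} = \Delta_m\sqcup\Delta_{n+1}$. In both cases $\mathcal{D} = \Delta_m\sqcup\Delta_{n+1}$, and dualizing back completes the proof.

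The computation itself is short; the care is concentrated in the dualization step — keeping straight that $\link_v(\mathcal{C})$, $\mathcal{C}\setminus v$, $\link_v(\mathcal{D})$, and $\mathcal{D}\setminus v$ all live on the common $(q-1)$-element set $[q]\setminus\{v\}$, and repeatedly invoking $D_{k,l} = (\Delta_k\sqcup\Delta_l)^*$ and involutivity of Alexander duality. The one genuinely structural point is that consistency of the local data at $v$, namely the containment $\link_v(\mathcal{D})\subseteq\mathcal{D}\setminus v$, is precisely what pins the facet $N'$ down relative to the partition $M\sqcup N$; the subcase $m=n$ with $N' = M$ is a real possibility rather than a vacuous one, but it is dispatched by the same bookkeeping.
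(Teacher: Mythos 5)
There is a genuine gap in the cardinality step. You argue that $N'\subseteq M$ or $N'\subseteq N$ forces ``either $N' = N$, or $m = n$ and $N' = M$,'' but when $n < m$ the containment $N'\subseteq M$ together with $|N'| = n+1 < m+1 = |M|$ leaves open the possibility that $N'$ is a \emph{proper} subset of $M$. In that event $\mathcal{D} = \mathcal{C}^*$ has the three facets $M$, $N$, and $N'\cup\{v\}$, which is not a disjoint union of two simplices, so the argument does not reach $\mathcal{D} = \Delta_m\sqcup\Delta_{n+1}$. This situation is realizable: take $m=2$, $n=1$, $v=6$, $M=\{1,2,3\}$, $N=\{4,5\}$, $N'=\{2,3\}$, and let $\mathcal{C}$ be the complex whose faces not containing $v$ are the subsets of $\{1,\dots,5\}$ avoiding $\{1,4,5\}$ (facets $\{2,3,4,5\},\{1,2,3,4\},\{1,2,3,5\}$), and whose faces containing $v$ are the sets $F\cup\{6\}$ with $F\in D_{2,1}$ on $M\sqcup N$. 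Then $\link_v(\mathcal{C}) = D_{2,1}$ and $(\mathcal{C}\setminus v)^*$ is $\Delta_1$ on $\{2,3\}$ with ghost vertices $1,4,5$, hence isomorphic to $G^{3}(\Delta_1) = G^{m+1}(\Delta_n)$; yet $\mathcal{C}$ has the facet $\{1,2,3,5\}$, which omits only one vertex and is therefore not a facet of any $D_{2,2}$, so $\mathcal{C}\neq D_{m,n+1}$.

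What rescues the proposition is an implicit constraint carried over from the case enumeration in Lemma~\ref{case4}: the $m+1$ ghost vertices of $(\mathcal{C}\setminus v)^*$ are taken to be exactly $M$, equivalently the non-ghost $\Delta_n$ sits on $N$. The paper's own proof invokes this in its opening sentence (``all the vertices of $M$ and none of the vertices of $N$ are ghosts in $(\mathcal{C}\setminus v)^*$''). Under that constraint, $N' = N$ by fiat, the case split in your argument disappears, and the remainder of your dualized reconstruction of $\mathcal{D}$ is correct and is essentially the Alexander dual of the paper's direct computation of the facets of $\mathcal{C}$. The piece you are missing is not the bookkeeping around $\link_v(\mathcal{D})\subseteq\mathcal{D}\setminus v$ that you highlight at the end, but the identification of which vertices are ghosts; without that, the cardinality argument as you have written it cannot be closed, and the statement itself admits the counterexample above.
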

\begin{proof}
	Let $M,N$ denote the sets of vertices such that
	in $(\link_v(\mathcal{C}))^*$, the complex induced on $M$ is $\Delta_m$
	and the complex induced on $N$ is $\Delta_n$.
	Then all the vertices of $M$ and none of the vertices of $N$ are ghosts in $(\mathcal{C}\setminus v)^*$.
	We claim that each facet of $\mathcal{C}$ contains $N-2$ vertices, and the omitted pair is one of the following
	\begin{enumerate}
		\item some $a \in M$ and some $b \in N$
		\item $v$ and some $a \in M$.
	\end{enumerate}
	To see this, note that if a facet contains $v$, then it must be of the first form since $\link_v(\mathcal{C}) = D_{m,n}$.
	If a facet does not contain $v$, then its complement must be $v$, along with a minimal non-face of $(\mathcal{C}\setminus v)^*$.
	These minimal non faces are precisely the vertices in $M$.
	So $\mathcal{C} = D_{m,n+1}$ where $M$ and $N \cup \{v\}$
	are the sets of vertices for the $\Delta_{m}$ and $\Delta_{n+1}$ in $\mathcal{C}^*$ respectively.
\end{proof}

\begin{prop}\label{arbitrary}
	Let $\mathcal{C}$ be a simplicial complex on $q$ vertices.
	Let $v$ be a vertex in $\mathcal{C}$ such that $\link_v(\mathcal{C}) = D_{m,n}$ with $m,n \ge 1$
	and has no ghost vertices.
	Let $M,N$ denote the sets of vertices such that
	in $(\link_v(\mathcal{C}))^*$, the complex induced on $M$ is $\Delta_m$
	and the complex induced on $N$ is $\Delta_n$.
	Assume $(\mathcal{C}\setminus v)^* = G^{m+1}(\mathcal{D})$ where $\mathcal{D}$ is a nuclear complex on vertex set $N$
	with more than one facet.
	Then $\mathcal{C}$ has a proper minor that is not nuclear.
\end{prop}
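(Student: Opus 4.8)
The plan is to pass to the Alexander dual $\mathcal{C}^*$: I will show that its $1$-skeleton is a disjoint union of two cliques but that $\mathcal{C}^*$ is not itself a disjoint union of two simplices, conclude from Proposition~\ref{disconnected} that $\mathcal{C}^*$ is not $\beta$-avoiding, and then transfer the resulting forbidden minor back to $\mathcal{C}$.

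First I would dualize the hypotheses. Applying Proposition~\ref{linkdual} to $\mathcal{C}$ gives $\mathcal{C}^*\setminus v=(\link_v(\mathcal{C}))^*=D_{m,n}^*=\Delta_m\sqcup\Delta_n$, split along $M$ and $N$, and applying it to $\mathcal{C}^*$ gives $\link_v(\mathcal{C}^*)=(\mathcal{C}\setminus v)^*=G^{m+1}(\mathcal{D})$. Consequently the faces of $\mathcal{C}^*$ are precisely the subsets of $M$, the subsets of $N$, and the sets $\{v\}\cup F$ with $F$ a face of $\mathcal{D}$. Since $M$ consists of ghost vertices of $G^{m+1}(\mathcal{D})$ and $\mathcal{D}$ has no ghost vertices, the $1$-skeleton of $\mathcal{C}^*$ is the clique on $M$ together with the clique on $N\cup\{v\}$, with no edges between them; that is, it is $K_{m+1}\sqcup K_{n+2}$.

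Now suppose, toward showing $\mathcal{C}^*$ fails to be $\beta$-avoiding, that it were $\beta$-avoiding. Then Proposition~\ref{disconnected} forces $\mathcal{C}^*=\Delta_m\sqcup\Delta_{n+1}$. But this is false: because $\mathcal{D}$ has more than one facet, $\mathcal{D}\neq\Delta_n$, so $N$ is not a face of $\mathcal{D}$ and hence $\{v\}\cup N$ is not a face of $\mathcal{C}^*$, whereas $\{v\}\cup N$ is a facet of $\Delta_m\sqcup\Delta_{n+1}$. So $\mathcal{C}^*$ is not $\beta$-avoiding, i.e.\ $\mathcal{C}^*$ contains one of the complexes of Proposition~\ref{prop:badcomplexes} as a minor; each of those is non-unimodular and therefore non-nuclear. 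Furthermore $\mathcal{C}^*$ is not itself on that list: its $1$-skeleton $K_{m+1}\sqcup K_{n+2}$ is disconnected with both components having at least two vertices ($m+1\geq2$, $n+2\geq3$), while the only list member with a disconnected $1$-skeleton is $\partial\Delta_k\sqcup\{w\}$, whose $1$-skeleton has an isolated vertex. Hence the forbidden minor is a \emph{proper} minor of $\mathcal{C}^*$. Finally, by the correspondence of minors established in the proof of Proposition~\ref{bdual} together with Proposition~\ref{dualclosed}, a proper non-nuclear minor of $\mathcal{C}^*$ dualizes to a proper non-nuclear minor of $\mathcal{C}$, as desired.

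The only delicate point is the use that $\mathcal{D}$ has no ghost vertices, which is what makes the $1$-skeleton of $\mathcal{C}^*$ exactly $K_{m+1}\sqcup K_{n+2}$; this is implicit in writing $(\mathcal{C}\setminus v)^*=G^{m+1}(\mathcal{D})$ with $\mathcal{D}$ on vertex set $N$, but one can also sidestep it, since the $M$-component of the $1$-skeleton of $\mathcal{C}^*$ is always an isolated clique $K_{m+1}$, so if $\mathcal{C}^*$ were $\beta$-avoiding then Lemma~\ref{1skel} would already force the remaining component to be the single clique on $N\cup\{v\}$. Everything else is routine bookkeeping with Propositions~\ref{linkdual}, \ref{disconnected}, \ref{bdual}, and \ref{dualclosed}.
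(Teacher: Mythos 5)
Your proof is correct, and it takes a genuinely different route from the paper's. The paper argues directly: it picks $x \in M$ and shows $\link_x(\mathcal{C})$ is not nuclear by first describing its facets and then ruling out each of the five nuclear forms one at a time (not Lawrence type, more than two facets, cannot be $\cone^p(\Delta_k\sqcup\Delta_l)$, and not pure so cannot be $\cone^p(D_{k,l})$). You instead pass to the Alexander dual, show via Lemma~\ref{1skel} and Proposition~\ref{disconnected} that $\mathcal{C}^*$ fails to be $\beta$-avoiding (since $\mathcal{D}$ having more than one facet means $\{v\}\cup N$ is missing), verify that $\mathcal{C}^*$ cannot itself be one of the forbidden complexes by its $1$-skeleton, and transport the resulting proper forbidden minor back to $\mathcal{C}$ via the minor--dual correspondence from Proposition~\ref{bdual} and Proposition~\ref{dualclosed}. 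Your approach buys a cleaner conceptual picture (the obstruction is visible at the level of the $1$-skeleton of the dual) and leans harder on the duality machinery already built up in Sections~\ref{sec:basic}--\ref{sec:1skel}, at the cost of an extra step --- ruling out $\mathcal{C}^*$ itself being on the bad list --- that the paper's direct construction does not need. You also correctly flag the one subtle point (whether $\mathcal{D}$ has ghost vertices in $N$) and your sidestep via Lemma~\ref{1skel} is sound: the $M$-component being an isolated $K_{m+1}$ with $m+1\geq 2$ already forces the remaining component to be a single clique if $\mathcal{C}^*$ were $\beta$-avoiding, and the two-components-of-size-at-least-two observation rules out $\partial\Delta_k\sqcup\{w\}$ regardless of $\mathcal{D}$'s ghost vertices, since one further needs to note that $v$ is not isolated in $\mathcal{C}^*$ (which follows because $\mathcal{C}\setminus v \neq \Delta_{m+n+1}$).
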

\begin{proof}
	Let $x \in M$.
	We show that $\link_x(\mathcal{C})$ is not nuclear.
	{
	We start by showing it is not Lawrence type.
	Let $F$ be a facet of $\link_x(\mathcal{C})$ that does not contain $v$.
	Then $F \cup \{x\}$ is a facet of $\calc\setminus v$ so it must also lack some $a \in M \setminus \{x\}$.
	If $F$ is a facet of $\link_x(\calc)$ that contains $v$, then $F \cup \{x\} \setminus \{v\}$ is a facet of $\link_v(\calc)$
	and therefore lacks some $a \in M$ and $b\in N$.
	So $\link_x(\mathcal{C})$ is not $\Lambda(\mathcal{F})$ for any complex $\mathcal{F}$.
	\\
	\indent
	We can see that $\link_x(\mathcal{C})$ has more than two facets - since $m,n \ge 1$, there are at least two that include $v$
	and since $(C\setminus v)^* = G^{m+1}(\mathcal{D})$, there is at least one that does not.}
	$\link_x(\mathcal{C})$ cannot be $\cone^p(\Delta_k \sqcup \Delta_l)$ for any $p \ge 0$.
	\\
	\indent
	All of the vertices of $M$ are ghost vertices in $(\mathcal{C}\setminus v)^*$,
	and $(\mathcal{C}\setminus v)^*$ has a minimal non-face $S \subseteq N$ with at least two vertices.
	This implies that $\mathcal{C}$ contains a facet $F$ such that $M \subseteq F$,
	there exist $b_1,b_2 \in N$ such that $b_1,b_2 \notin F$ and $v \notin F$ (since $\link_v(\mathcal{C}) = D_{m,n}$).
	Facets in $\link_x(\mathcal{C})$ that contain $v$ have $N-1-2$ vertices - they lack some $a \in M\setminus\{x\}$ and some $b \in N$.
	Furthermore, $F\setminus\{x\}$ is a facet of $\link_x(\mathcal{C})$ containing at most $N-1-3$ vertices.
	This means that $\link_x(\mathcal{C})$ is not pure, and therefore not $\cone^p(D_{k,l})$ for any $p \ge 0$.
	So $\link_x(\mathcal{C})$ is a proper minor of $\mathcal{C}$ that is not nuclear.
\end{proof}

\begin{lemma}\label{case4}
	Let $\mathcal{C}$ be a $\beta$-avoiding simplicial complex on $q$ vertices.
	Assume all proper minors of $\mathcal{C}$ are nuclear.
	If there exists a vertex $v$ of $\mathcal{C}$ such that $\link_v(\mathcal{C}) = \cone^p(D_{m,n})$
	for $p \ge 0$ and $m,n \ge 1$,
	then $\mathcal{C}$ is nuclear.
\end{lemma}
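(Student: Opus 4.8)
\emph{Overview.} The plan is: reduce, by Alexander duality, the hypothesis $\link_v(\calc)=\cone^p(D_{m,n})$ to the statement $\calc^*\setminus v=\cone^p(\Delta_m\sqcup\Delta_n)$; dispose of the case $p\ge 1$ using Proposition \ref{messy1}; and, in the harder case $p=0$, run a case analysis on the shape of the nuclear complex $(\calc\setminus v)^*$, each case reducing to one of the technical propositions of this section.

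\emph{The case $p\ge 1$.} By Proposition \ref{linkdual} and the fact that $\cone^p(\cdot)$ commutes with duality (Proposition \ref{commutes}), the hypothesis $\link_v(\calc)=\cone^p(D_{m,n})$ is equivalent to $\calc^*\setminus v=\cone^p(\Delta_m\sqcup\Delta_n)$. The complex $\calc^*$ is $\beta$-avoiding (Proposition \ref{bdual}); its proper minors are Alexander duals of proper minors of $\calc$ and hence nuclear (Proposition \ref{dualclosed}); and it is $C_4$-free, since its induced subcomplex $\calc^*\setminus v=\cone^p(\Delta_m\sqcup\Delta_n)$ contains two disjoint edges forming an induced subcomplex (one vertex pair from each simplex; recall $m,n\ge 1$), whereas by Proposition \ref{square} any $\beta$-avoiding complex with an induced $C_4$ becomes, after adding ghost vertices, an iterated cone over $C_4$, and no such complex has two disjoint edges as an induced subcomplex. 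If $p\ge 1$, Proposition \ref{messy1} now applies to $\calc^*$ and forces $\calc^*$ to be $\cone^{p+1}(\Delta_m\sqcup\Delta_n)$, $\cone^p(\Delta_{m+1}\sqcup\Delta_n)$, or $\cone^p(\Delta_m\sqcup\Delta_{n+1})$; dualizing (Proposition \ref{commutes}), $\calc$ is $\cone^{p+1}(D_{m,n})$, $\cone^p(D_{m+1,n})$, or $\cone^p(D_{m,n+1})$, each nuclear by Definition \ref{defn:nuclear}.

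\emph{The case $p=0$.} Now $\link_v(\calc)=D_{m,n}$ with $m,n\ge 1$. Since $\link_v(\calc)\subseteq\calc\setminus v$ and $D_{m,n}$ has no ghost vertices, $\calc$ has $m+n+3$ vertices and $\calc\setminus v$ is a nuclear complex on $m+n+2$ vertices containing $D_{m,n}$; equivalently $\mathcal{L}:=(\calc\setminus v)^*$ is a nuclear subcomplex of $(D_{m,n})^*=\Delta_m\sqcup\Delta_n$. Every face of $\mathcal{L}$ lies entirely in one of the two vertex classes $M,N$, so $\mathcal{L}=\mathcal{L}_M\cup\mathcal{L}_N$ with $\mathcal{L}_M\subseteq\Delta_m$, $\mathcal{L}_N\subseteq\Delta_n$; using that $\mathcal{L}$ is nuclear (and disconnected whenever both pieces carry a vertex) one shows $\mathcal{L}$ has one of a few shapes, giving six cases:
\begin{enumerate}
\item[(i)] $\mathcal{L}$ void, i.e. $\calc\setminus v$ a full simplex: then $M\cup N$ is a big facet of $\calc$, so $\calc=\Lambda(\link_v(\calc))=\Lambda(D_{m,n})$, nuclear.
\item[(ii)] $\mathcal{L}=\Delta_m\sqcup\Delta_n$, i.e. $\calc\setminus v=D_{m,n}$: then $D_{m,n}$ is induced in $\calc$ and $\calc$ has no ghost vertex, so by Proposition \ref{iteratedcone} $\calc=\cone^q(D_{m,n})$; counting vertices, $q=1$, so $\calc=\cone^1(D_{m,n})$, nuclear.
\item[(iii)] $\mathcal{L}=G^{m+1}(\Delta_n)$ (or the mirror $G^{n+1}(\Delta_m)$): Proposition \ref{n} (resp. its mirror) gives $\calc=D_{m,n+1}$ (resp. $D_{m+1,n}$), nuclear.
\item[(iv)] $\mathcal{L}=G^{m+1}(\mathcal{D})$ with $\mathcal{D}$ nuclear on $N$ having more than one facet (or the mirror): Proposition \ref{arbitrary} produces a non-nuclear proper minor of $\calc$, contradicting the hypothesis, so this case does not occur.
\item[(v)] $\mathcal{L}=G^{\,m+n-m'-n'}(\Delta_{m'}\sqcup\Delta_{n'})$ with $\mathcal{L}_M,\mathcal{L}_N$ simplices but not falling under (i)--(iii): Proposition \ref{m'n'} produces a non-nuclear proper minor of $\calc$, again a contradiction, so this case does not occur.
\item[(vi)] the remaining degenerate forms of $\mathcal{L}$ (for instance the irrelevant complex, i.e. $\calc\setminus v=\partial\Delta_{m+n+1}$, or a proper simplex supported on one class with the other class entirely ghost): a short direct argument exhibits a forbidden minor of $\calc$ — an induced independent triple, or a vertex whose link is three isolated points — contradicting that $\calc$ is $\beta$-avoiding, so these cases do not occur.
\end{enumerate}
Thus when $p=0$, $\calc$ is one of $\Lambda(D_{m,n})$, $\cone^1(D_{m,n})$, $D_{m,n+1}$, $D_{m+1,n}$, all nuclear; together with the case $p\ge 1$ this proves the lemma.

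\emph{Main obstacle.} The substance of the proof is the $p=0$ analysis: pinning down exactly which nuclear subcomplexes $\mathcal{L}\subseteq\Delta_m\sqcup\Delta_n$ can occur as $(\calc\setminus v)^*$, and in the bad cases locating an explicit small non-nuclear (or non-$\beta$-avoiding) minor of $\calc$. This demands careful bookkeeping of how the facets of $\calc$ through $v$, namely $\{F\cup\{v\}:F\in\link_v(\calc)\}$, interact with the facets of $\calc\setminus v$ while tracking ghost vertices; Propositions \ref{messy1}, \ref{m'n'}, \ref{n} and \ref{arbitrary} are precisely the lemmas that perform this bookkeeping, so the present proof is mostly a matter of setting up the duality reduction and the case split and invoking them.
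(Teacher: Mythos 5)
Your proposal is correct and follows essentially the same route as the paper's proof: dualize, dispose of $p\ge 1$ via Proposition \ref{messy1}, and for $p=0$ run a case analysis on the nuclear complex $(\calc\setminus v)^*\subseteq\Delta_m\sqcup\Delta_n$ with Propositions \ref{m'n'}, \ref{n}, \ref{arbitrary} and \ref{ksimplex} (or an equivalent) doing the heavy lifting. The one place where you genuinely diverge is the $C_4$-freeness of $\calc^*$ in the $p\ge 1$ case: you observe directly that $\calc^*\setminus v=\cone^p(\Delta_m\sqcup\Delta_n)$ (with $m,n\ge 1$) has an induced $\Delta_1\sqcup\Delta_1$, which no $G^q(\cone^k(C_4))$ can have, so $\calc^*$ has no induced $C_4$ by Proposition \ref{square}. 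The paper instead analyzes how $v$ attaches to the cone vertices and to the two simplices, isolating a subcase ``$\calc^*$ has an induced $C_4$'' and resolving it via Proposition \ref{square}, even though (as your argument shows) that subcase is actually vacuous. Your route is slightly cleaner on this point. Two small imprecisions to tighten: in case (v)/(vi), a ``proper simplex on one class with the other class entirely ghost'' is in fact an instance of Proposition \ref{m'n'} (with $m'=-1$ or $n'=-1$) rather than a separate degenerate form, and the irrelevant-complex case is handled cleanly by Proposition \ref{ksimplex} (which is, after all, an inductive version of the independent-triple obstruction you gesture at); it is worth citing that proposition explicitly rather than appealing to ``a short direct argument.''
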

\begin{proof}
	{We split this into two cases.
	For the first case, assume $p \ge 1$.
	Here, Propositions \ref{linkdual} and \ref{commutes} give $\mathcal{C}^*\setminus v = \cone^p(\Delta_m\sqcup\Delta_n)$.
	By Proposition \ref{bdual},
	$\calc^*$ is $\beta$-avoiding so in $\calc^*$, $v$ must connect to one of the simplices in the $\Delta_m\sqcup \Delta_n \subset \mathcal{C}^*$
	to avoid inducing an independent set on three vertices.
	Now let $u$ be a cone vertex.
	If $v$ connects to only one such simplex, then $v$ must also connect to $u$ to avoid inducing $P_4$.
	If $v$ connects to both such simplices, and $v$ does not connect to $u$, then $\mathcal{C}^*$
	has $C_4$ induced.
	In this case Proposition \ref{square} implies $\mathcal{C}^* = \cone^k(C_4)$
	and so $\mathcal{C} = \cone^k(\Delta_1 \sqcup \Delta_1)$.
	So we can assume $v$ connects to at least one simplex, and to all cone vertices in $\calc^*\setminus v$.
	From this we can see that $\mathcal{C}^*$ has no induced $C_4$.
	Since every proper minor of $\mathcal{C}^*$ is nuclear, Proposition \ref{messy1} implies
	$\mathcal{C}^* = \cone^{p+1}(\Delta_m \sqcup \Delta_n)$ or $\mathcal{C}^* = \cone^p(\Delta_{m+1}\sqcup \Delta_n)$.
	Proposition \ref{dualclosed} then implies that $\calc$ is nuclear.}
	\\
	\indent
	For the second case, assume $p = 0$.
	So $\link_v(\mathcal{C}) = D_{m,n}$ with $m,n \ge 1$.
	Then $D_{m,n} \subseteq \mathcal{C}\setminus v$
	and so $(\mathcal{C}\setminus v)^* \subseteq \Delta_m \sqcup \Delta_n$.
	Let $M,N$ denote the sets of vertices of $\mathcal{C}$ such that 
	in $(\link_v(\mathcal{C}))^*$, the vertices of the $\Delta_m$ are $M$, and the vertices of the $\Delta_n$ are $N$.
	The non-ghost vertices of $(\mathcal{C}\setminus v)^*$ must form a nuclear complex,
	and the only disconnected nuclear complexes are of the form $\Delta_s \sqcup \Delta_t$.
	From this it follows that $(\mathcal{C}\setminus v)^*$ must be one of the following forms (without loss of generality)
	\begin{enumerate}
		\item $\Delta_m \sqcup \Delta_n$
		\item {$G^{m+n-m'-n'}(\Delta_{m'} \sqcup \Delta_{n'})$ with $-1 \le m'< m$ and $0 \le n' \le n$, but not both $m' = -1$ and $n' = n$}
		\item $G^{m+1}(\Delta_{n})$
		\item $G^{m+1}(\mathcal{D})$ where $\mathcal{D}$ is a nuclear complex on $N$ with more than one facet
		\item $G^{m+n+2}(\{\emptyset\})$
		\item $G^{m+n+2}(\{\})$.
	\end{enumerate}
	We handle each possibility separately.
	\\
	\indent
	{\bf Case 1.}  Assume $(\mathcal{C}\setminus v)^* = \Delta_m \sqcup \Delta_n$.
	Then $(\mathcal{C}\setminus v)^*$ has no ghost vertices, and so
	$\mathcal{C}\setminus v$ is not of Lawrence type.
	So, by Lemma \ref{minimality},
	$\mathcal{C} \setminus  v = D_{m,n}$ and so $\mathcal{C} = \cone^1(D_{m,n})$ with $v$ as the cone point.
	\\
	\indent
	{{\bf Case 2.} Assume $(\mathcal{C}\setminus v)^* = G^{m+n-m'-n'}(\Delta_{m'} \sqcup \Delta_{n'})$ with $-1\le m' < m$
	and $0 \le n' \le n$ but not both $m'=-1$ and $n' = n$.
	By Proposition \ref{m'n'}, $\mathcal{C}$ has a proper minor $M$ that is not nuclear.
	By induction, $M$ is not $\beta$-avoiding and so neither is $\mathcal{C}$.}
	\\
	\indent
	{\bf Case 3.} Assume $(\mathcal{C}\setminus v)^* = G^{m+1}(\Delta_n)$.
	Then Proposition \ref{n} implies that $\mathcal{C} = D_{m,n+1}$.
	\\
	\indent
	{\bf Case 4.} Assume $(\mathcal{C}\setminus v)^* = G^{m+1}(\mathcal{D})$ 
	where $\mathcal{D}$ is a nuclear complex on $N$ with more than one facet.
	By Proposition \ref{arbitrary}, $\mathcal{C}$ has a proper minor $M$ that is not nuclear.
	By induction, $M$ is not $\beta$-avoiding and so neither is $\mathcal{C}$.
	\\
	\indent
	{\bf Case 5.} Assume $(\mathcal{C}\setminus v)^* = G^{m+n+2}(\{\emptyset\})$.
	Then $\mathcal{C}\setminus v = \partial\Delta_{m+n+1}$.
	But $\link_v{\mathcal{C}} = D_{m,n}$ and so $v$ is not in any $m+n-1$ simplex with the vertices of $M$ and $N$.
	By Proposition \ref{ksimplex}, $\mathcal{C}$ is not $\beta$-avoiding.
	\\
	\indent
	{\bf Case 6.} Assume $(\mathcal{C}\setminus v)^* = G^{m+n+2}(\{\})$.
	Then $\mathcal{C}\setminus v = \Delta_{m+n+1}$
	and so $\mathcal{C} = \Lambda(D_{m,n})$ with $v$ as the added vertex.
\end{proof}

Now we have all the necessary tools to prove Theorem \ref{main}.

\begin{proof}[Proof of Theorem \ref{main}]
	By Proposition \ref{minor}, (\ref{unimodular}) implies (\ref{bavoiding}).
	Each nuclear complex can be obtained by applying the unimodularity-preserving operations
	from Section \ref{sec:basic} to $\Delta_n \sqcup \Delta_m$, which is unimodular by Proposition \ref{disjointsimplices}.
	Therefore (\ref{nuclear}) implies (\ref{unimodular}).
	We now show (\ref{bavoiding}) implies (\ref{nuclear}).
	\\
	\indent
	Let $\mathcal{C}$ be a $\beta$-avoiding complex on $q$ vertices.
	We show that $\mathcal{C}$ is nuclear by induction on $q$.
	For the base case, note that all simplicial complexes on $2$ or fewer vertices are both $\beta$-avoiding and nuclear.
	We may assume that $\mathcal{C}$ has no ghost vertices since adding and removing ghost vertices
	does not affect the properties of being nuclear nor $\beta$-avoiding.
	In light of Propositions \ref{square} and \ref{disconnected} and Lemma \ref{1skel},
	we only need to consider the cases where the $1$-skeleton of $\mathcal{C}$ is $K_q$,
	or the union of two complete graphs, glued along a common \emph{nonempty} clique.
	Therefore, we can choose a vertex $v$ of $\mathcal{C}$ that is in a facet with every other vertex.
	Since $\mathcal{C}$ is $\beta$-avoiding, so is $\link_v(\mathcal{C})$.
	By induction, $\link_v(\mathcal{C})$ is also nuclear.
	There are five main cases.
	\\
	\indent
	{\bf Case 1.}
	This is the case where $\link_v(\mathcal{C}) = \Lambda(\mathcal{D})$ for a nuclear $\mathcal{D}$.
	Let $F$ denote the big facet of $\mathcal{C}$ and let $u$ be the vertex of $\mathcal{C}$ that is not in $F$.
	Then $\mathcal{C} = \Lambda(\mathcal{C}\setminus u)$, and $\mathcal{C}\setminus u$ is nuclear by induction.
	\\
	\indent
	{\bf Case 2.}
	This is the case where $\link_v(\mathcal{C})$ has ghost vertices.
	Since $v$ connects to all other vertices, this is impossible.
	\\
	\indent
	{\bf Case 3.}
	This is the case where $\link_v(\mathcal{C}) = \cone^p(\Delta_m \sqcup \Delta_n)$ with $p,m,n \ge 0$.
	We split this into the sub cases, the first where $m,n \ge 1$ and the second where $m = 0$.
	\\
	\indent
	{\bf Case 3.1}
	This is the sub case where $m,n \ge 1$.
	Proposition \ref{linkdual} gives
	$\mathcal{C}^*\setminus v = \cone^p(D_{m,n})$ and so $\mathcal{C}^*$ has $D_{m,n}$ induced.
	Proposition \ref{iteratedcone} then implies that $\mathcal{C}^*$ is either $\cone^{p+1}(D_{m,n})$, or $\cone^p(D_{m,n})$
	with $v$ as a ghost vertex.
	In the first case, $\mathcal{C} = \cone^{p+1}(\Delta_m \sqcup \Delta_n)$.
	In the second case, $\mathcal{C} = \Lambda(\cone^{p}(\Delta_m\sqcup \Delta_n))$.
	\\
	\indent
	{\bf Case 3.2}
	This is the sub case where $m = 0$.
	Proposition \ref{linkdual} gives $\mathcal{C}^*\setminus v = \cone^p(D_{0,n})$ which has a ghost vertex $u$.
	Then $\mathcal{C}^*\setminus u$ is nuclear by induction and since $u$ is still a ghost vertex in $\mathcal{C}^*$,
	this implies $\mathcal{C}^*$ is nuclear.
	Proposition \ref{dualclosed} implies that $\mathcal{C}$ is nuclear.
	\\
	\indent
	{\bf Case 4.}
	This is the case where $\link_v(\mathcal{C}) = \cone^p(D_{m,n})$ with $p \ge 0$ and $m,n \ge 1$.
	By induction, all proper minors of $\mathcal{C}$ are nuclear.
	Then Lemma \ref{case4} implies that $\mathcal{C}$ is nuclear.
	\\
	\indent
	{\bf Case 5.} This is the case where $\link_v(\mathcal{C}) = \Delta_{k}$ with $k \ge -2$.
	Since $\link_v(\mathcal{C})$ has no ghost vertices $k = q-2$.
	So $\mathcal{C} = \Delta_{q-1}$.
\end{proof}

\section{Unimodularity of  Non-Binary Hierarchical Models}\label{sec:non}

In this section we consider the unimodularity of hierarchical models that are non-binary;
i.e. determining when $\mathcal{A}_{\calc,\bfd}$ is unimodular for arbitrary $\bfd$.
We begin by showing how Theorem \ref{main} makes this problem more tractable.
Unlike in the binary case, 
the Alexander dual does not always preserve unimodularity.
{For example, $\mathcal{A}_{\calc,\bfd}$ is unimodular when $\calc = \Delta_1 \sqcup \Delta_1$ and $d = (2,2,2,3)$.
However, $\mathcal{A}_{\calc^*,\bfd}$ is not (see Proposition \ref{badnonbinary}).}
Because of this, our proof of Proposition \ref{link} is not  valid
in the non-binary case.
The proposition statement does however generalize as we show with Corollary \ref{linknonbinary}.
Proposition \ref{badnonbinary} gives a list of known
minimal pairs $(\mathcal{C},\bfd)$ with $\calc$ nuclear where $\mathcal{A}_{\calc,\bfd}$ is not unimodular.
We then use this list to pose a simple question whose answer would be a step in the general classification.

\begin{prop}\label{conformal}
	Assume $\mathcal{A}_{\mathcal{C},{\bf d}}$ is unimodular.
	Then for all ${\bf d}'$ such that ${\bf d}' \le {\bf d}$ componentwise,
	$\mathcal{A}_{\mathcal{C},{\bf d}'}$ is unimodular.
\end{prop}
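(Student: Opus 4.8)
The plan is to realize $\mathcal{A}_{\mathcal{C},\bfd'}$ as a matrix whose columns are (up to the identifications of the facet-marginal blocks) a subset of the columns of $\mathcal{A}_{\mathcal{C},\bfd}$, after which Proposition \ref{obvious}(\ref{item:subset}) finishes the argument. Concretely, since $\bfd' \le \bfd$ componentwise, the index set $\prod_{j\in[n]}[d'_j]$ sits inside $\prod_{j\in[n]}[d_j]$; let $B$ be the matrix obtained from $\mathcal{A}_{\mathcal{C},\bfd}$ by selecting exactly the columns indexed by $\bfi\in\prod_{j\in[n]}[d_j]$ with $i_j\le d'_j$ for all $j$. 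By Proposition \ref{obvious}(\ref{item:subset}), $B$ is unimodular.

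Next I would check that $B$ and $\mathcal{A}_{\mathcal{C},\bfd'}$ agree after deleting zero rows. A row of $\mathcal{A}_{\mathcal{C},\bfd}$ is indexed by a pair $(F,e)$ with $F\in\facet(\mathcal{C})$ and $e\in\prod_{j\in F}[d_j]$, and the entry in column $\bfi$ is $1$ exactly when $e=(i_j:j\in F)$. If some coordinate of $e$ exceeds the corresponding entry of $\bfd'$, then no selected column $\bfi$ (which has all $i_j\le d'_j$) can produce a $1$ in that row, so that row of $B$ is identically zero. Deleting precisely these zero rows from $B$ leaves the rows indexed by pairs $(F,e)$ with $e\in\prod_{j\in F}[d'_j]$, and on those rows the incidence pattern against the selected columns is exactly the defining incidence pattern of $\mathcal{A}_{\mathcal{C},\bfd'}$. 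Hence the matrix $B'$ obtained from $B$ by deleting zero rows equals $\mathcal{A}_{\mathcal{C},\bfd'}$ (this is the same argument as in the proof of Proposition \ref{induced}).

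Finally, deleting zero rows does not change $\ker_\zz$, hence does not change the circuits (Definition \ref{def:circuit}) nor the Graver basis (Definition \ref{defn:graver}); so $\mathcal{A}_{\mathcal{C},\bfd'}=B'$ has the same Graver basis as $B$, which is a subset of the Graver basis of $\mathcal{A}_{\mathcal{C},\bfd}$. By the Graver-basis characterization of unimodularity (Definition \ref{def:unimodular}(4)), unimodularity of $\mathcal{A}_{\mathcal{C},\bfd}$ implies unimodularity of $\mathcal{A}_{\mathcal{C},\bfd'}$.

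I do not expect any real obstacle here: the only thing to be careful about is the bookkeeping in identifying which rows become zero and confirming that the surviving submatrix is literally $\mathcal{A}_{\mathcal{C},\bfd'}$ rather than merely isomorphic to it. This is entirely parallel to the proof of Proposition \ref{induced} (where one instead fixed certain coordinates to $1$); the present statement is the analogous fact for shrinking the number of levels at each node rather than deleting nodes.
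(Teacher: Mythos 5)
Your proof is correct and takes essentially the same approach as the paper: the paper simply observes that $\mathcal{A}_{\mathcal{C},\bfd'}$ can be realized as a subset of columns of $\mathcal{A}_{\mathcal{C},\bfd}$, which is exactly your construction. You fill in the bookkeeping (which columns to select, why the extraneous rows are zero, and why deleting them is harmless) that the paper leaves implicit.
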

\begin{proof}
	Note that $\mathcal{A}_{\mathcal{C},{\bf d}'}$ can be realized as a subset of columns of $\mathcal{A}_{\mathcal{C},{\bf d}}$.
\end{proof}

The special case where ${\bf d'} = {\bf 2}$ gives us the following useful corollary.

\begin{cor}\label{nuclearnonbinary}
	If $\mathcal{A}_{\mathcal{C},{\bf d}}$ is unimodular, then $\mathcal{C}$ is nuclear.
\end{cor}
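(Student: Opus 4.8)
The plan is to deduce this immediately from Proposition \ref{conformal} together with the hard direction of Theorem \ref{main}. Recall that in the definition of $\mathcal{A}_{\calc,\bfd}$ we always require $d_i \geq 2$ for every $i$, so the vector $\mathbf{2} = (2,2,\dots,2)$ satisfies $\mathbf{2} \leq \bfd$ componentwise. Hence Proposition \ref{conformal}, applied with $\bfd' = \mathbf{2}$, tells us that $\mathcal{A}_{\calc,\mathbf{2}}$ is unimodular. By definition $\mathcal{A}_\calc := \mathcal{A}_{\calc,\mathbf{2}}$, so $\calc$ is a unimodular simplicial complex.

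The second and final step is to invoke Theorem \ref{main}: a simplicial complex is unimodular if and only if it is nuclear (these are conditions (\ref{unimodular}) and (\ref{nuclear}) of that theorem). Since we have just shown $\calc$ is unimodular, Theorem \ref{main} gives that $\calc$ is nuclear, which is exactly the claim.

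There is essentially no obstacle here; the corollary is a one-line consequence once Proposition \ref{conformal} and Theorem \ref{main} are in hand. The only point worth stating explicitly is why $\mathbf{2} \leq \bfd$ is automatic, namely the standing hypothesis $d_i \geq 2$ built into the construction of hierarchical models.
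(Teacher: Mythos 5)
Your proof is correct and is exactly the paper's argument: the corollary is stated in the paper as "the special case where ${\bf d'} = {\bf 2}$" of Proposition \ref{conformal}, combined with the equivalence of unimodularity and nuclearity from Theorem \ref{main}. Nothing more is needed.
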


So in order to classify all unimodular hierarchical models we only need to consider nuclear complexes.
Therefore we can approach the general classification problem by looking at each nuclear $\mathcal{C}$
and identifying the minimal values of ${\bf d}$ that
give rise to non-unimodular $\mathcal{A}_{\mathcal{C},{\bf d}}$.
Before proceeding with this,
we show that the class of unimodular $\mathcal{A}_{\mathcal{C},{\bf d}}$ is still closed under taking minors of $\mathcal{C}$.
However Corollary \ref{dual_uni} fails in the non-binary case; we cannot freely take the Alexander dual of $\mathcal{C}$.
Because of this, our proof of Corollary \ref{link} is not valid in the non-binary case.
We will give an alternate proof that the class of unimodular $\mathcal{A}_{\mathcal{C},{\bf d}}$ is closed under
taking links in $\mathcal{C}$.
We start with a useful proposition.

\begin{prop}\label{projection}
	Let $A \in \rr^{r\times n}$ be a unimodular matrix with columns $\{{\bf a}_i\}_{i=1}^n$.
	Assume ${\bf a}_n$ is nonzero.
	Let $A'$ be the matrix that results when we project $A$ onto the hyperplane orthogonal to ${\bf a}_n$.
	Then $A'$ is unimodular.
\end{prop}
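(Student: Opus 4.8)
The plan is to relate the circuits (equivalently, the Graver basis) of the projected matrix $A'$ to those of $A$, and invoke the circuit characterization of unimodularity, Definition \ref{def:unimodular}(3). Let $\pi$ denote the orthogonal projection of $\rr^r$ onto the hyperplane $H = {\bf a}_n^\perp$, so that $A' = \pi A$ has columns ${\bf a}_i' = \pi({\bf a}_i)$ for $i \in [n]$; note ${\bf a}_n' = 0$. The key observation is that $\ker_\zz A'$ and $\ker_\zz A$ are closely related: if $u \in \ker_\zz A$ then $A'u = \pi(Au) = 0$, so $\ker_\zz A \subseteq \ker_\zz A'$. Conversely, if $u \in \ker_\zz A'$, then $Au = \sum_i u_i {\bf a}_i$ lies in the line spanned by ${\bf a}_n$ (since its projection to $H$ vanishes), so $Au = c\, {\bf a}_n$ for some rational $c$; hence $u - c\, e_n \in \ker_\qq A$, where $e_n$ is the $n$th standard basis vector. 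In other words, $\ker_\qq A' = \ker_\qq A + \qq e_n$.

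First I would use this to describe the circuits of $A'$. A circuit of $A'$ is a support-minimal nonzero element of $\ker_\zz A'$ (up to scaling to coprime entries). Given such an element $u$ with $u_n = 0$ after possibly subtracting a multiple of $e_n$: actually the cleanest route is to observe that the circuits of $A'$ are exactly the circuits $C$ of $A$ with $n \notin \supp(C)$, together with (coprime rescalings of the $e_n$-eliminations of) circuits $C$ of $A$ with $n \in \supp(C)$. More precisely: if $C$ is a circuit of $A'$, lift it to $\tilde C = C - c\,e_n \in \ker_\qq A$; clearing denominators and extracting a circuit of $A$ supported inside $\supp(C) \cup \{n\}$, one checks that this circuit of $A$, when its $n$th coordinate is deleted and the result rescaled to coprime entries, recovers $C$ (support-minimality on both sides forces equality of supports off coordinate $n$). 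This is the standard behavior of circuits under elimination of one coordinate — the same mechanism as Fourier–Motzkin / the "circuit elimination" in Proposition \ref{obvious}(\ref{item:subset}), now allowing a single extra coordinate $n$ to be spent.

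Given that dictionary, the proof concludes quickly: every circuit of $A'$ is obtained from some circuit $C$ of $A$ by deleting the $n$th coordinate and dividing by $\gcd$ of the remaining entries. Since $A$ is unimodular, the entries of $C$ are all in $\{0, \pm 1\}$ by Definition \ref{def:unimodular}(3). Deleting a coordinate keeps entries in $\{0,\pm 1\}$, and if the remaining nonzero entries are all $\pm 1$ then their $\gcd$ is $1$, so no rescaling occurs and the entries of the circuit of $A'$ remain in $\{0,\pm 1\}$. Hence every circuit of $A'$ has entries in $\{0,\pm 1\}$, so $A'$ is unimodular.

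The main obstacle is making the circuit correspondence airtight — in particular, verifying that a support-minimal element of $\ker_\zz A'$ really does lift to (a coordinate-deletion of) a support-minimal element of $\ker_\zz A$, rather than only to some non-minimal kernel element. The point to check carefully is that if the lift $\tilde C \in \ker_\qq A$ failed support-minimality, i.e. there were a circuit $C'$ of $A$ with $\supp(C') \subsetneq \supp(\tilde C) \subseteq \supp(C)\cup\{n\}$, then deleting coordinate $n$ from $C'$ would give a nonzero element of $\ker_\zz A'$ with support strictly inside $\supp(C)$, contradicting minimality of $C$; the only subtlety is the case $\supp(C') = \{n\}$, which is impossible since ${\bf a}_n \neq 0$ means $e_n \notin \ker A$. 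Once that is dispatched, the bound on entries is immediate. (One could equivalently phrase the whole argument in terms of the Graver basis and Definition \ref{def:unimodular}(4), using that Graver elements behave analogously under elimination, but the circuit version is the shortest.)
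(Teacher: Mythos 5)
Your proof is correct, but it takes a genuinely different route from the paper's. The paper works with determinants via its Proposition \ref{dualdet}: from $A$ it constructs a full-rank integer matrix $B$ with $AB^T = 0$ (so $B$ represents the dual matroid and is itself unimodular), deletes the $n$th column of $B$ to form $B'$ (still unimodular by Proposition \ref{obvious}(\ref{item:subset})), verifies by direct computation that $A'(B')^T = 0$ with the appropriate ranks, and invokes Proposition \ref{dualdet} once more to conclude. In effect the paper realizes the projection (matroid contraction at $n$) as dualize--delete--dualize and never touches circuits. You instead argue directly with circuits: from $\ker_\qq A' = \ker_\qq A + \qq e_n$ you show that every circuit of $A'$ (other than $\pm e_n$) arises by zeroing the $n$th coordinate of a circuit of $A$, whose entries are already $\{0,\pm 1\}$, so the gcd normalization is trivial and the $\{0,\pm 1\}$ bound survives; the support-minimality verification is the crux and you handle it, including the $\supp(C')=\{n\}$ edge case. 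Two minor remarks. First, your opening claim that the circuits of $A'$ are \emph{exactly} the circuits of $A$ avoiding $n$ together with $n$-eliminations of circuits through $n$ is an overstatement: a circuit of $A$ avoiding $n$ can fail to be a circuit of $A'$ if its support properly contains some $n$-elimination. You only use the correct inclusion (every circuit of $A'$ arises this way), so the argument is unharmed, but the ``exactly'' should be dropped. Second, the analogy to Proposition \ref{obvious}(\ref{item:subset}) is loose: that item is deletion of columns, under which circuits survive verbatim, whereas what is happening here is matroid contraction, the dual operation, under which circuits shrink by coordinate elimination.
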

\begin{proof}
	We may assume $A$ has rank $r$ since otherwise we can delete unnecessary rows.
	Let $B \in \rr^{n\times (n-r)}$ have rank $n-r$ such that $AB^T = 0$.
	Denote the columns of $B$ as $\{{\bf b}_i\}_{i=1}^n$.
	By Proposition \ref{dualdet}, $B$ is also unimodular.
	Denote the columns of $A'$ as $\{{\bf a}_i'\}_{i=1}^{n-1}$.
	Then
	\[
		{\bf a}_i' = {\bf a}_i - \frac{\langle {\bf a}_i, {\bf a}_n \rangle}{\|{\bf a}_n\|^2} {\bf a}_n.
	\]
	Let $B'$ be the matrix with columns $\{b_i\}_{i=1}^{n-1}$.
	Then $B'$ has rank $n-r$ or $n-r-1$.
	The second case implies that ${\bf b}_n$ is a coloop in the matroid underlying $B$.
	Since the matroids underlying $A$ and $B$ are duals, this would imply that ${\bf a}_n = 0$.
	But then $A' = A$ which we assumed to be unimodular.
	So we can assume that $\rank(B') = \rank(B) = n-r$ and that ${\bf a}_n \neq 0$.
	Therefore $\rank(A') = r-1$ and so the dimension of its kernel is $n-r$, which is the rank of $B'$.
	We claim that $A'(B')^T = 0$.
	From this it follows by Proposition \ref{dualdet} that $A'$ is unimodular.
	\\
	\indent
	Now we prove the claim.
	Let ${\bf b}_i$ be a column of $B'$ (so $1 \le i \le n-1$).
	Letting $b_{ji}$ denote the $j$th entry of ${\bf b}_i$, we have
	\[
		A'{\bf b}_i^T = \sum_{j=1}^{n-1} b_{ji} \left({\bf a_j} - \frac{\langle {\bf a}_i, {\bf a}_n \rangle}{\|{\bf a}_n\|^2} {\bf a}_n\right).
	\]
	Note that $b_{jn}\left({\bf a}_n - \frac{\langle {\bf a}_n, {\bf a}_n \rangle}{\|{\bf a}_n\|^2} {\bf a}_n\right) = 0$,
	so we can add the $n$th term to the above sum.
	This enables us to break it up as follows
	\[
		\sum_{j=1}^n b_{ji}{\bf a}_j - \frac{1}{\|{\bf a}_n\|^2}\sum_{j=1}^n b_{ji} \langle {\bf a}_j, {\bf a}_n\rangle {\bf a}_n.
	\]
	The first term is $A{\bf b}_i^T = 0$.
	The second term is $\left(\frac{1}{\|{\bf a}_n\|^2} {\bf a}_n^TA{\bf b}_i^T\right)\cdot{\bf a}_n = 0$.
	So the claim is proven.
\end{proof}

\begin{cor}\label{linknonbinary}
	Assume $\mathcal{A}_{\mathcal{C},{\bf d}}$ is unimodular.
	Let $v$ be a vertex of $\mathcal{C}$ and let ${\bf d}'$ denote the vector obtained by deleting the entry for $v$ from ${\bf d}$.
	Then $\mathcal{A}_{\link_v{\mathcal{C}},{\bf d}'}$ is unimodular.
\end{cor}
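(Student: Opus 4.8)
The plan is to deduce this from Proposition \ref{projection} by presenting $\mathcal{A}_{\link_v(\mathcal{C}),{\bf d}'}$, up to row space, as a matrix obtained from $\mathcal{A}_{\mathcal{C},{\bf d}}$ by a sequence of such projections. Reindex so that $v = n$ and fix a level $a \in [d_n]$; we may assume $\mathcal{C}$ has a facet containing $v$, since otherwise $\link_v(\mathcal{C})$ is void and there is nothing to prove. Let $V$ be the span of the columns $\bfa_{\bfi}$ of $\mathcal{A}_{\mathcal{C},{\bf d}}$ with $i_n \neq a$. The first step is to observe that applying Proposition \ref{projection} to one such column (projecting $\mathcal{A}_{\mathcal{C},{\bf d}}$ onto its orthogonal complement and deleting the resulting zero column), then repeating with the successive images of the remaining such columns, has the net effect of projecting the surviving columns---those indexed by tables with $i_n = a$---orthogonally onto $V^\perp$; this is the standard fact that iterating the projection in Proposition \ref{projection} produces the orthogonal projection onto the span of the chosen columns, together with Proposition \ref{obvious}(\ref{item:subset}) to discard any column whose running image has already become zero. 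Every intermediate matrix is unimodular, so the resulting matrix $B$, whose columns are $P_{V^\perp}(\bfa_{(\bfi',a)})$ for $\bfi' \in \prod_{j<n}[d_j]$, is unimodular.

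The remaining, and main, step is to show that $B$ has the same row space as $\mathcal{A}_{\link_v(\mathcal{C}),{\bf d}'}$, under the identification of both column index sets with $\prod_{j<n}[d_j]$; Proposition \ref{obvious}(\ref{item:rowspace}) then finishes the proof. I would establish this by showing the two matrices have the same kernel. Writing $\hat x$ for the $n$-way table placing a weight vector $x$ on the slice $i_n = a$ and zero off it, one has $Bx = P_{V^\perp}(\mathcal{A}_{\mathcal{C},{\bf d}}\hat x)$, so $x \in \ker B$ if and only if $\mathcal{A}_{\mathcal{C},{\bf d}}\hat x \in V$; since $V$ is the image under $\mathcal{A}_{\mathcal{C},{\bf d}}$ of the tables supported off the slice $i_n = a$, this holds if and only if there is such a table $y$ with $\hat x - y \in \ker \mathcal{A}_{\mathcal{C},{\bf d}}$. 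Now the facets of $\mathcal{C}$ containing $n$ are exactly the sets $F' \cup \{n\}$ for $F'$ a facet of $\link_v(\mathcal{C})$, and for such $F'$ the value of the $(F' \cup \{n\})$-marginal of $\hat x - y$ along $i_n = a$ equals the $F'$-marginal of $x$; the forced vanishing of those marginals gives $\ker B \subseteq \ker \mathcal{A}_{\link_v(\mathcal{C}),{\bf d}'}$.

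The reverse inclusion is the part I expect to require the most care. Given $x \in \ker \mathcal{A}_{\link_v(\mathcal{C}),{\bf d}'}$, I would exhibit an explicit witness: since $d_n \geq 2$ there is a level $b \neq a$, and one takes $y$ to be the table placing $x$ on the slice $i_n = b$. Then $\hat x - y$ is supported on the slices $i_n \in \{a, b\}$; for a facet $F$ of $\mathcal{C}$ with $n \notin F$ its $F$-marginal is the $F$-marginal of $x$ minus itself, hence zero, and for a facet $F$ with $n \in F$ its $F$-marginal along each of the two slices is $\pm$ the $(F \setminus \{n\})$-marginal of $x$, which vanishes because $F \setminus \{n\}$ is a face of $\link_v(\mathcal{C})$. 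So $\hat x - y \in \ker \mathcal{A}_{\mathcal{C},{\bf d}}$ with $y$ supported off the slice $i_n = a$, giving $x \in \ker B$. Combining the inclusions and applying Proposition \ref{obvious}(\ref{item:rowspace}) to the unimodular matrix $B$ completes the argument.
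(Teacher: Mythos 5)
Your proof is correct, and it follows the same high-level strategy as the paper's: project $\mathcal{A}_{\mathcal{C},\mathbf{d}}$ onto the orthogonal complement of the columns indexed by tables whose $v$-coordinate avoids a fixed level, iterating Proposition~\ref{projection}, and then identify the survivor with $\mathcal{A}_{\link_v(\mathcal{C}),\mathbf{d}'}$ up to row space. Where you diverge is in how that identification is made. The paper invokes the block-decomposition lemma of Ho\c{s}ten--Sullivant to write $\mathcal{A}_{\mathcal{C},\mathbf{d}}$ as a $(d_v+1)\times d_v$ block matrix with $\mathcal{A}_{\link_v(\mathcal{C}),\mathbf{d}'}$ down a diagonal and $\mathcal{A}_{\mathcal{C}\setminus v,\mathbf{d}'}$ along the bottom, then reads off from the block form that the top block-row of the projected matrix is $(A\ \mathbf{0}\ \cdots\ \mathbf{0})$ and finishes with a rank count to drop the lower rows. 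You instead compute kernels directly: you show $x\in\ker B$ iff $\hat{x}$ differs from some off-slice table by a kernel element of $\mathcal{A}_{\mathcal{C},\mathbf{d}}$, argue one inclusion by reading off the slice-$a$ part of the $(F'\cup\{n\})$-marginals, and produce the explicit witness $y$ (a copy of $x$ on a second slice) for the converse. Your route is more self-contained and makes the combinatorics of the marginals visible, at the cost of more verification; the paper's is shorter but leans on an external structural lemma. Both are sound, and your choice of explicit witness for the reverse inclusion is precisely the clean move that makes the hands-on version work.
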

\begin{proof}
	Let $A = \mathcal{A}_{\link_v{\mathcal{C}},{\bf d}'}$ and let $B = \mathcal{A}_{\mathcal{C}\setminus v, {\bf d}'}$.
	By the remark following Lemma 2.2 in \cite{HostenSullivant2007} and the lemma itself, we can write
	\[
		\mathcal{A}_{\mathcal{C},{\bf d}} =\left[\begin{matrix}
						A &  {\bf 0}  & \ldots & {\bf 0}\\
						{\bf 0}  &  A & \ldots & {\bf 0}\\
						\vdots & \vdots & \ddots & \vdots\\
						{\bf 0}  &   {\bf 0}       &\ldots & A\\
						B & B & \dots & B
					\end{matrix}\right].
	\]
	Assume $A \in \rr^{m \times n}$ and $B \in \rr^{l \times n}$ and so $\mathcal{A}_{\mathcal{C},{\bf d}} \in \rr^{dm+l, dn}$.
	{ By Proposition \ref{obvious}(\ref{item:rowspace}),
	we may remove rows from $\mathcal{A}$ to make $A,B,\mathcal{A}$ all have full row rank.
	So assume that they do.}
	Let $\mathcal{A}'$ denote the matrix that results when we project $\mathcal{A}_{\mathcal{C},{\bf d}}$
	onto the subspace orthogonal to the last $(d-1)n$ columns.
	Then if $1 \le i \le n$, the $i$th column of $\mathcal{A}'$ can be expressed as the $i$th column of $\mathcal{A}_{\mathcal{C},{\bf d}}$
	minus a linear combination of the last $(d-1)n$ columns of $\mathcal{A}_{\mathcal{C},{\bf d}}$,
	all of which are $0$ in the top $m$ rows.
	So this means that the top $m$ rows of $\mathcal{A}'$ are
	\[
		\begin{pmatrix} A & {\bf 0} & \dots & {\bf 0} \end{pmatrix}.
	\]
	Furthermore, since $A$ and $\mathcal{A}_{\mathcal{C},{\bf d}}$ both have full row rank,
	the final $(d-1)n$ columns of $\mathcal{A}_{\mathcal{C},{\bf d}}$ have rank $(d-1)m+l$.
	Therefore $\mathcal{A}'$ has rank $m$.
	Since $A$ also has rank $m$, we may delete the bottom $(d-1)m+l$ rows of $\mathcal{A}'$ without affecting the rowspace
	and therefore unimodularity.
	So the matrix $\begin{pmatrix} A & {\bf 0} & \dots & {\bf 0} \end{pmatrix}$, and therefore $A$, is unimodular.
\end{proof}

Our proof of Proposition \ref{induced} is still valid in the non-binary case.
Therefore, we have the following.

\begin{prop}\label{minornonbinary}
	Assume $\mathcal{C}'$ is a minor of $\mathcal{C}$.
	If $\mathcal{A}_{\mathcal{C},{\bf d}}$ is unimodular, then so is $\mathcal{A}_{\mathcal{C}',{\bf d}'}$
	where $\bfd'$ is the restriction of $\bfd$ to the vertices that are in $\calc'$.
\end{prop}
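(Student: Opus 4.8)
The plan is to realize the passage from $\mathcal{C}$ to its minor $\mathcal{C}'$ as a finite sequence of single-vertex deletions and single-vertex links, and to apply Proposition \ref{induced} and Corollary \ref{linknonbinary} one step at a time, tracking how $\bfd$ restricts. By Definition \ref{def:minor} we may write $\mathcal{C}' = \link_R(\mathcal{C}\setminus S)$ where $S$ and $R$ are disjoint sets of vertices of $\mathcal{C}$ and $R$ is a face of $\mathcal{C}$. First I would dispose of the deletion part: writing $S = \{s_1,\dots,s_a\}$, the complex $\mathcal{C}\setminus S$ is obtained from $\mathcal{C}$ by deleting the vertices $s_1,\dots,s_a$ one at a time, and each such deletion is exactly the passage to an induced subcomplex. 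Since the text observes that the proof of Proposition \ref{induced} carries over to the non-binary case verbatim, it follows that $\mathcal{A}_{\mathcal{C}\setminus S,\,\bfe}$ is unimodular, where $\bfe$ denotes the restriction of $\bfd$ to the vertices not in $S$.

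Next I would handle the link part. Because $S\cap R=\emptyset$ and $R$ is a face of $\mathcal{C}$, the set $R$ is still a face of $\mathcal{C}\setminus S$. Writing $R=\{r_1,\dots,r_b\}$, the identity noted just after the definition of $\link_S$ gives
\[
	\link_R(\mathcal{C}\setminus S) \;=\; \link_{r_b}\bigl(\cdots \link_{r_2}(\link_{r_1}(\mathcal{C}\setminus S))\cdots\bigr),
\]
and at the $i$th stage the vertex $r_{i+1}$ is still present and lies in the face $\{r_{i+1},\dots,r_b\}$ of the complex obtained so far, so the hypotheses of Corollary \ref{linknonbinary} are met at every step. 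Each application of that corollary preserves unimodularity and removes precisely the coordinate of the ambient $\bfd$-vector indexed by the linked vertex. After all $b$ links we conclude that $\mathcal{A}_{\mathcal{C}',\bfd'}$ is unimodular, where $\bfd'$ is the restriction of $\bfd$ to the vertices of $\mathcal{C}\setminus S$ that are not in $R$, i.e.\ exactly the vertices appearing in $\mathcal{C}'$, as required.

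The one point requiring a little care — and essentially the only obstacle — is that the presentation $\mathcal{C}'=\link_R(\mathcal{C}\setminus S)$ from Definition \ref{def:minor} is not unique, so one wants the argument to be insensitive to the order in which the elementary operations are performed; this is guaranteed by Proposition \ref{commutes} (deletions commute with links at distinct vertices) together with the iterated-link identity above, which together justify carrying out all deletions before all links. Everything else is routine bookkeeping: the restriction of $\bfd$ behaves as expected under each elementary step, so no new ideas beyond Proposition \ref{induced} and Corollary \ref{linknonbinary} are needed.
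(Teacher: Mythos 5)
Your proof is correct and follows essentially the same route the paper takes: the paper has just established the non-binary version of Proposition \ref{induced} (deletion of a vertex, i.e.\ passage to an induced subcomplex, preserves unimodularity of $\mathcal{A}_{\calc,\bfd}$) and Corollary \ref{linknonbinary} (taking the link of a vertex preserves unimodularity), and Proposition \ref{minornonbinary} then drops out of Definition \ref{def:minor} together with the iterated-link identity, exactly as you spell out. Two small remarks: the closing paragraph about order-invariance is harmless but unnecessary, since the statement only needs \emph{one} presentation $\mathcal{C}'=\link_R(\mathcal{C}\setminus S)$ and Definition \ref{def:minor} already supplies it; and the commutation fact you cite as Proposition \ref{commutes} (deletions commute with links at distinct vertices) is actually stated in the unnumbered remark immediately following that proposition — Proposition \ref{commutes} itself concerns the $\cone^p$ operation.
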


The proofs of Propositions \ref{cone}, \ref{ghost} generalize to the non-binary setting and so we have the following.

\begin{prop}\label{coneghost}
	If the pair $(\mathcal{C},{\bf d})$ gives rise to unimodular $\mathcal{A}_{\mathcal{C},{\bf d}}$,
	so do $(\cone^p(\mathcal{C}),{\bf d}')$ and $(G(\mathcal{C}),{\bf d}'')$
	where $\bfd' = \begin{pmatrix} \bfd & c_1 & \dots & c_p \end{pmatrix}$ and
	$\bfd'' = \begin{pmatrix} \bfd & c \end{pmatrix}$ for any $c,c_1,\dots,c_p \ge 2$.
\end{prop}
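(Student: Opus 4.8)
The plan is to generalize the proofs of Propositions \ref{cone} and \ref{ghost} essentially verbatim, checking that the block decompositions of the design matrix that drove those arguments survive when the appended vertices carry $c \geq 2$ levels rather than exactly two. Write $A := \mathcal{A}_{\mathcal{C},\bfd}$ and $r := \rank A$. For the ghost vertex: since $n+1$ lies in no facet of $G(\mathcal{C})$, unwinding the definition of $\pi_{G(\mathcal{C}),\bfd''}$ exactly as in the proof of Proposition \ref{ghost} gives
\[
	\mathcal{A}_{G(\mathcal{C}),\bfd''} = \begin{pmatrix} A & A & \cdots & A\end{pmatrix}
\]
with $c$ copies of $A$ (the $c$ slices of a table along the new coordinate all map into the common target by the same marginal matrix). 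This matrix has rank $r$, and I would invoke characterization (2) of Definition \ref{def:unimodular}: any $r\times r$ submatrix selects $r$ columns of $A$, possibly with repeats; if two selected columns coincide its determinant is $0$, and otherwise it is an $r\times r$ submatrix of $A$ and so has determinant $0$ or $\pm\lambda$ for the $\lambda$ witnessing unimodularity of $A$. Hence $\mathcal{A}_{G(\mathcal{C}),\bfd''}$ is unimodular with the same $\lambda$.

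Next I would add a single cone vertex, passing from $\mathcal{C}$ to $\mathcal{C}' = \cone(\mathcal{C})$ with the new vertex having $c := c_1$ levels. Grouping the columns of $\mathcal{A}_{\mathcal{C}',\bfd'}$ by the value $k \in [c]$ of the cone coordinate, and grouping the rows of the target by the value of the cone coordinate appearing in each marginal cell (legitimate since $n+1$ lies in every facet of $\mathcal{C}'$), the definition of the marginal map shows that $\mathcal{A}_{\mathcal{C}',\bfd'}$ is block-diagonal with $c$ diagonal blocks each equal to $A$, exactly as in the proof of Proposition \ref{cone}. The Graver basis of such a block-diagonal matrix is the union of $c$ disjointly embedded copies of the Graver basis of $A$ — a kernel element with two nonzero blocks is never primitive, since zeroing out one of those blocks yields a strictly smaller conformal kernel element — so it consists of $0,\pm 1$ vectors if and only if the Graver basis of $A$ does; by characterization (4) of Definition \ref{def:unimodular}, $\mathcal{A}_{\mathcal{C}',\bfd'}$ is unimodular. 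Iterating $p$ times, appending one cone vertex with $c_i$ levels at the $i$-th step, gives the statement for $\cone^p(\mathcal{C})$.

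I do not expect a genuine obstacle here: the only point requiring care is confirming that the two displayed block decompositions really are forced by the definition of $\pi_{\mathcal{C},\bfd}$ when the new vertices have $c \geq 2$ rather than exactly $2$ levels, and this is the same bookkeeping already carried out in the binary proofs of Propositions \ref{cone} and \ref{ghost}. (If one prefers a uniform treatment, the cone case can also be finished by the repeated-column determinant argument used above for ghost vertices, since a maximal square submatrix of a block-diagonal matrix whose chosen rows and columns do not match up block by block has a vanishing determinant.)
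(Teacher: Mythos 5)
Your proof is correct and takes essentially the same approach as the paper: both hinge on exhibiting $\mathcal{A}_{\cone^p(\mathcal{C}),\bfd'}$ as block-diagonal in copies of $\mathcal{A}_{\mathcal{C},\bfd}$ and $\mathcal{A}_{G(\mathcal{C}),\bfd''}$ as a horizontal concatenation of copies of $\mathcal{A}_{\mathcal{C},\bfd}$. You iterate the cone construction one vertex at a time and spell out why block structure preserves unimodularity (via the Graver/repeated-column arguments), whereas the paper displays the full $c_1\cdots c_p$-block decomposition at once and leaves the conclusion to the reader, but the core argument is identical.
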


\begin{proof}
In the case of a cone, the matrix $\mathcal{A}_{\cone^p(\mathcal{C}),{\bf d}'}$
is a block diagonal matrix:
$$
\mathcal{A}_{\cone^p(\mathcal{C}),{\bf d}'} = 
\begin{pmatrix}
\mathcal{A}_{\mathcal{C}, {\bf d}} & 0 & \cdots & 0  \\
0 &  \mathcal{A}_{\mathcal{C}, {\bf d}} & \cdots & 0  \\
\vdots & \vdots & \ddots & \vdots  \\
0 & 0 & \cdots & \mathcal{A}_{\mathcal{C}, {\bf d}}
\end{pmatrix},
$$
with $c_{1}c_{2} \cdots c_{p}$ blocks down the diagonal.
In the case of adding a ghost vertex, the matrix $\mathcal{A}_{G(\mathcal{C}),{\bf d}'}$ has repeated columns:
$$
\mathcal{A}_{G(\mathcal{C}),{\bf d}''} = 
\begin{pmatrix}
\mathcal{A}_{\mathcal{C}, {\bf d}} & \mathcal{A}_{\mathcal{C}, {\bf d}} & \cdots & \mathcal{A}_{\mathcal{C}, {\bf d}}  \\
\end{pmatrix}
$$
with $c$ blocks.
\end{proof}

We can also extend Proposition \ref{disjointsimplices}.

\begin{prop}\label{disjointnonbinary}
	Let $\mathcal{C} = \Delta_m \sqcup \Delta_n$ and ${\bf d} \in \zz_{\ge 2}^{m+n+2}$.
	Let $M,N$ denote the vertex sets of $\mathcal{C}$ in the $\Delta_m,\Delta_n$ respectively.
	Let $\mathcal{D}$ be the complex with facets $\{1,2\}$ and let ${\bf e} = (e_1,e_2)$ where
	\[
		e_1 = \prod_{v \in M} d_v \qquad e_2 = \prod_{v \in N}d_v.
	\]
	Then $\mathcal{A}_{\mathcal{C},{\bf d}} = \mathcal{A}_{\mathcal{D},{\bf e}}$.
	This matrix is unimodular.
\end{prop}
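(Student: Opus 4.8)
The plan is to prove the matrix identity by a direct re-indexing of rows and columns, and then to deduce unimodularity by recognizing the common matrix as a vertex--edge incidence matrix of a complete bipartite graph. Note first that $\mathcal{D}$ here is $\Delta_0\sqcup\Delta_0$, two isolated vertices on ground set $\{1,2\}$ with facets $\{1\}$ and $\{2\}$, so that $\mathcal{A}_{\mathcal{D},\bfe}$ has rows indexed by $[e_1]\sqcup[e_2]$ and columns indexed by $[e_1]\times[e_2]$.

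For the identity, I would introduce the flattening isomorphism. The columns of $\mathcal{A}_{\mathcal{C},\bfd}$ are indexed by $\prod_{j\in[m+n+2]}[d_j]=\prod_{v\in M}[d_v]\times\prod_{v\in N}[d_v]$; since $e_1=\prod_{v\in M}d_v$ and $e_2=\prod_{v\in N}d_v$, fixing a bijection $\prod_{v\in M}[d_v]\cong[e_1]$ and a bijection $\prod_{v\in N}[d_v]\cong[e_2]$ identifies this column index set with $[e_1]\times[e_2]$. The rows of $\mathcal{A}_{\mathcal{C},\bfd}$ coming from the facet $M$ are indexed by $\prod_{v\in M}[d_v]\cong[e_1]$ and those from $N$ by $\prod_{v\in N}[d_v]\cong[e_2]$. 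Using the description of $\mathcal{A}_{\calc,\bfd}$ via marginals, the $(M,\bfj)$-row has a $1$ in the column $(\bfi_M,\bfi_N)$ precisely when $\bfi_M=\bfj$, and similarly for the $N$-rows, which is exactly the defining recipe for $\mathcal{A}_{\mathcal{D},\bfe}$. Hence the two matrices coincide (at worst up to reordering rows and columns, which is harmless by Proposition \ref{obvious}). Phrased invariantly: the $M$-marginal of a table of format $\bfd$ depends only on the coarsening of that table to the $e_1\times e_2$ flattened matrix and equals its vector of row sums, while the $N$-marginal equals its vector of column sums, and these are exactly the two components of $\pi_{\mathcal{D},\bfe}$.

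For unimodularity, the argument from the proof of Proposition \ref{disjointsimplices} applies verbatim with $2^{m+1},2^{n+1}$ replaced by $e_1,e_2$: $\mathcal{A}_{\mathcal{D},\bfe}$ is the vertex--edge incidence matrix of the complete bipartite graph with parts of sizes $e_1$ and $e_2$, and such incidence matrices are network matrices, hence totally unimodular \cite[Ch.~19]{Schrijver1986}, hence unimodular.

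I do not expect a genuine obstacle. The one point requiring care is bookkeeping: one must use the chosen bijections $\prod_{v\in M}[d_v]\cong[e_1]$ and $\prod_{v\in N}[d_v]\cong[e_2]$ consistently for the columns and for the marginal rows, so that the equality of linear maps actually yields equality of matrices rather than equality only up to a permutation (which, as noted, would still suffice).
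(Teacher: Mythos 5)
Your proof is correct and follows the same route as the paper: the paper asserts the matrix identity $\mathcal{A}_{\mathcal{C},\bfd}=\mathcal{A}_{\mathcal{D},\bfe}$ ``by inspection'' and then invokes the vertex--edge incidence matrix of $K_{e_1,e_2}$ being a network matrix, hence totally unimodular. You merely spell out the ``by inspection'' step via the flattening bijections $\prod_{v\in M}[d_v]\cong[e_1]$ and $\prod_{v\in N}[d_v]\cong[e_2]$, which is exactly the intended identification, and you correctly read $\mathcal{D}$ as $\Delta_0\sqcup\Delta_0$ (facets $\{1\}$ and $\{2\}$) rather than the single edge $\{1,2\}$.
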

\begin{proof}
	We can see that $\mathcal{A}_{\mathcal{C},{\bf d}} = \mathcal{A}_{\mathcal{D},{\bf e}}$ by inspection.
	The matrix $\mathcal{A}_{\mathcal{D},(e_1,e_2)}$ is the vertex edge incidence matrix of a complete bipartite
	graph with $e_1$ and $e_2$ vertices in each set of the partition.
	Such vertex-edge incidence matrices are
	examples of network matrices and are hence totally unimodular \cite[Ch.~19]{Schrijver1986}.
\end{proof}

We also note that Proposition \ref{lawrence} holds in a slightly more general setting.
\begin{prop}\label{lawrencenonbinary}
	Let $(\mathcal{C},{\bf d})$ be such that $\mathcal{A}_{\mathcal{C},{\bf d}}$ is unimodular.
	Then $\mathcal{A}_{\Lambda(\mathcal{C}),{\bf d}'}$ is also unimodular if ${\bf d}' = \begin{pmatrix} \bfd & 2 \end{pmatrix}$.
\end{prop}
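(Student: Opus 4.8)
The plan is to reduce this to the matrix-level Lawrence lifting result (Theorem 7.1 in \cite{sturmfels}), exactly as in the proof of Proposition \ref{lawrence}; the only new point is to check that appending a \emph{binary} vertex to $\calc$ produces precisely the matrix Lawrence lifting $\Lambda(\mathcal{A}_{\calc,\bfd})$. So the first step is to write $\mathcal{A}_{\Lambda(\calc),\bfd'}$ down in block form. Index the columns of $\mathcal{A}_{\Lambda(\calc),\bfd'}$ by $\prod_{j\in[n]}[d_j]\times[2]$, ordering all columns whose last coordinate equals $1$ before those whose last coordinate equals $2$ (keeping the same order on $\prod_{j\in[n]}[d_j]$ in each half). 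Order the rows so that the blocks coming from the facets $F\cup\{n+1\}$ ($F$ a facet of $\calc$) come first, subdivided according to whether the $(n+1)$st coordinate of the row label is $1$ or $2$, and the block coming from the big facet $[n]$ comes last.

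With these orderings, a facet $F\cup\{n+1\}$ contributes, on the columns with last coordinate $1$, the matrix $\mathcal{A}_{\calc,\bfd}$ in the rows whose $(n+1)$st coordinate is $1$ and zero in the rows whose $(n+1)$st coordinate is $2$, and symmetrically on the columns with last coordinate $2$. Because $d_{n+1}=2$, the big facet $[n]$ contributes, for each $e\in\prod_{j\in[n]}[d_j]$, a row with a $1$ in the two columns $(e,1)$ and $(e,2)$ and zeros elsewhere, that is, the block $(\,I_t\ \ I_t\,)$ where $t=\prod_{j\in[n]}d_j$ is the number of columns of $\mathcal{A}_{\calc,\bfd}$. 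Hence
\[
\mathcal{A}_{\Lambda(\calc),\bfd'}\;=\;\begin{pmatrix}\mathcal{A}_{\calc,\bfd} & \mathbf{0}\\ \mathbf{0} & \mathcal{A}_{\calc,\bfd}\\ I_t & I_t\end{pmatrix}\;=\;\Lambda(\mathcal{A}_{\calc,\bfd}).
\]
The second step is then immediate: reordering rows does not change the rowspace and reordering columns preserves unimodularity, so by Proposition \ref{obvious} the unimodularity of $\mathcal{A}_{\Lambda(\calc),\bfd'}$ is equivalent to that of $\Lambda(\mathcal{A}_{\calc,\bfd})$, and by Theorem 7.1 in \cite{sturmfels} the latter is equivalent to the unimodularity of $\mathcal{A}_{\calc,\bfd}$, which holds by hypothesis.

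The only subtlety, and the step I would be most careful about, is the claim that the big-facet block is exactly $(\,I_t\ \ I_t\,)$: this is precisely where $d_{n+1}=2$ enters, since the $[n]$-marginal of a table in $\rr^{\bfd'}$ adds together exactly two entries per cell of $\prod_{j\in[n]}[d_j]$. If $d_{n+1}>2$ one instead gets a block $(\,I_t\ \ \cdots\ \ I_t\,)$ with $d_{n+1}$ copies of $I_t$, which is not the matrix Lawrence lifting, so the argument genuinely requires $d_{n+1}=2$. Everything else is routine bookkeeping parallel to the proof of Proposition \ref{lawrence}.
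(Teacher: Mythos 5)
Your argument is correct and takes exactly the same route as the paper: establish the identity $\mathcal{A}_{\Lambda(\calc),\bfd'}=\Lambda(\mathcal{A}_{\calc,\bfd})$ and invoke Theorem 7.1 of \cite{sturmfels}. The paper states this identity without unpacking the block structure; you have simply written out the bookkeeping that justifies it, correctly flagging that $d_{n+1}=2$ is what makes the big-facet block collapse to $(\,I_t\ \ I_t\,)$.
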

\begin{proof}
	The proof here is similiar to the proof for Proposition \ref{lawrence}.
	Note that $\Lambda(\mathcal{A}_{\calc,\bfd}) = \mathcal{A}_{\Lambda(\calc),\bfd'}$.
	Hence Theorem 7.1 in \cite{sturmfels} implies the proposition.
\end{proof}

Propositions \ref{conformal} and \ref{minornonbinary} suggest that
we might be able to use a short list of pairs $(\mathcal{C},{\bf d})$ that give rise
to non-unimodular $\mathcal{A}_{\mathcal{C},{\bf d}}$ to eliminate the remaining non-unimodular pairs.
The following proposition lists all non-binary minimally non-unimodular complexes
that we presently know.

\begin{prop}\label{badnonbinary}
	The following pairs give rise to non-unimodular $\mathcal{A}_{\mathcal{C},{\bf d}}$
	\begin{enumerate}
		\item\label{item:triangle} $\mathcal{C} = \Lambda(\Delta_0 \sqcup \Delta_0)$ with facets $\{12,23,13\}$ and ${\bf d} = (3,3,3)$
		\item\label{item:square} $\mathcal{C} = D_{1,1}$ with facets $\{12,23,34,14\}$ and ${\bf d} = (2,2,2,3)$
		\item\label{item:other} $\mathcal{C} = \Lambda(G(\Delta_0 \sqcup \Delta_0))$ with facets $\{12,13,234\}$ and ${\bf d} = (4,2,2,2)$
		\item\label{item:llsquare} $\mathcal{C} = \Lambda(D_{1,1})$ with facets $\{1234,125,235,345,145\}$ and ${\bf d} = (2,2,2,2,3)$.
	\end{enumerate}
\end{prop}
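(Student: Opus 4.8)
The plan is to certify that $\mathcal{A}_{\calc,\bfd}$ fails to be unimodular in each of the four cases by exhibiting an explicit element of its Graver basis whose entries are not all in $\{0,\pm 1\}$; by Definition \ref{def:unimodular}(4) this is exactly a proof of non-unimodularity. Equivalently, one may produce a single circuit of $\mathcal{A}_{\calc,\bfd}$ containing an entry of absolute value at least $2$ and invoke Definition \ref{def:unimodular}(3), or a vector $b\in\nn\mathcal{A}_{\calc,\bfd}$ together with a non-integral vertex of $P_{\mathcal{A}_{\calc,\bfd},\,b}$ and invoke Definition \ref{def:unimodular}(1). Any one of these witnesses, for any one of the four pairs, does the job, and the circuit/Graver route is cleanest since it requires producing only a single ``bad'' kernel vector.

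Concretely, for each pair I would first write down $\mathcal{A}_{\calc,\bfd}$ using the explicit indexing recalled after the Example in Section \ref{sec:preliminaries}: columns indexed by $\prod_j[d_j]$, rows by pairs $(F,e)$ with $F$ a facet of $\calc$ and $e\in\prod_{j\in F}[d_j]$, the entry in position $((F,e),\bfi)$ being $1$ exactly when $\bfi_F=e$. All four matrices are small --- the largest, in case (\ref{item:llsquare}), is $64\times 48$ --- so the Graver basis can be computed directly with 4ti2 \cite{4ti2}, exactly as in the proof of Proposition \ref{prop:badcomplexes}, and in each case one finds a primitive kernel vector with an entry $\pm 2$; the scripts and output can be posted to the authors' website \cite{BernsteinWeb2016}. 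For a self-contained argument one can instead hand-check a witness: in case (\ref{item:triangle}) the complex $\Lambda(\Delta_0\sqcup\Delta_0)=\partial\Delta_2$ with $\bfd=(3,3,3)$ is the classical $3\times3\times3$ no-three-way-interaction model, whose minimal Markov and Graver bases are well known to contain non-squarefree moves, so one may simply quote such a move; and for cases (\ref{item:square}), (\ref{item:other}), (\ref{item:llsquare}) one writes down a candidate integer table $\bfu$ supported on a small set of cells, one of them carrying value $\pm 2$, checks $\mathcal{A}_{\calc,\bfd}\bfu=0$ by verifying that the $F$-marginals vanish for each facet $F$, and verifies that $\bfu$ is primitive in the sense of Definition \ref{defn:graver}, i.e.\ that no nonzero $\bfv\in\ker_\zz\mathcal{A}_{\calc,\bfd}$ satisfies $u_iv_i\ge 0$ and $|v_i|\le|u_i|$ for all $i$ --- a finite check over the support patterns below $\bfu$.

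I expect the only real difficulty to be organizational rather than mathematical. The four examples do not share a common structural explanation; they are the minimal non-binary obstructions that happen to arise among nuclear complexes (relevant because of Corollary \ref{nuclearnonbinary}), and no simplifying tool is available to collapse them --- in particular Alexander duality cannot be used here, as shown by the example immediately preceding Proposition \ref{conformal}. So each case must be treated separately, and the bulk of the work is writing the design matrices correctly and, in the hand-checked cases, recording the explicit witness vectors and confirming their primitivity.
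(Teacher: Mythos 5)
Your approach is sound and for three of the four items it is essentially the paper's own argument: item (\ref{item:triangle}) is handled by quoting the known non-unimodularity of the $3\times 3\times 3$ no-three-way-interaction model (the paper cites \cite{ohsugi-hibi2010} for the same fact), and items (\ref{item:square}) and (\ref{item:llsquare}) are settled by a 4ti2 Graver-basis computation, exactly as you propose. The one place where you diverge from the paper, and where your plan is noticeably less economical, is item (\ref{item:other}). Rather than compute or hand-verify yet another Graver basis element, the paper observes that with $\bfd=(4,2,2,2)$ the design matrix $\mathcal{A}_{\calc,\bfd}$ for $\calc=\Lambda(G(\Delta_0\sqcup\Delta_0))$ (facets $12,13,234$) is \emph{equal} to the binary matrix $\mathcal{A}_{J_1^*,\bf 2}$: splitting the $4$-level vertex into two binary vertices turns the facets $\{12,13,234\}$ into $\{1'1''2,\,1'1''3,\,234\}$, which is isomorphic to $J_1^*$ with facets $\{134,235,245\}$, and $J_1^*$ was already shown non-unimodular in Proposition \ref{prop:badcomplexes}. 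This ``state-splitting'' identification is precisely the kind of simplifying tool you assert does not exist when you say the four examples ``do not share a common structural explanation'' and ``no simplifying tool is available to collapse them''; it collapses case (\ref{item:other}) to an earlier result with no new computation. I would also note that as written your plan produces no actual witness vectors for (\ref{item:square}) and (\ref{item:llsquare}); that is fine if you intend to rely on 4ti2 as the paper does, but if you want the hand-checked alternative to stand on its own you must actually exhibit and verify the candidate tables, including the primitivity check, which you currently only gesture at.
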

\begin{proof}
	By Table 1 in \cite{ohsugi-hibi2010}, (\ref{item:triangle}) is not unimodular.
	Computing a Graver basis with 4ti2 \cite{4ti2} shows that (\ref{item:square}) and (\ref{item:llsquare}) are not unimodular.  
	{Computations are available on our website \cite{BernsteinWeb2016}.}
	If $\mathcal{C}$ and ${\bf d}$ are as in (\ref{item:other}) then $\mathcal{A}_{\mathcal{C},{\bf d}} = \mathcal{A}_{J_1^*,{\bf 2}}$
	which is not unimodular by Proposition \ref{prop:badcomplexes}.
\end{proof}

From these non-unimodular examples and the constructive results earlier
in the section, we can classify all unimodular hierarchical models
that are nuclear with nucleus $D_{m,n}$, $m,n \geq 1$.

\begin{thm}\label{thm:dmnnonbinary}
Let $\calc$ be a nuclear complex with nucleus $D_{m,n}$ with
$m,n \geq 1$.  Then $\mathcal{A}_{\calc, {\bf d}}$ is unimodular
if and only if
\begin{enumerate}
\item  $d_{v} =2$ for all $v$ in the original $D_{m,n}$ and
\item  $d_{v} = 2$ for all $v$ corresponding to a Lawrence lifting.
\end{enumerate}
\end{thm}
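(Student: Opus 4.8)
The plan is to prove the two implications separately. For sufficiency, suppose ${\bf d}$ satisfies (1) and (2). Since $D_{m,n}=(\Delta_m\sqcup\Delta_n)^*$, Propositions \ref{disjointsimplices} and \ref{dual_uni} give that $\mathcal{A}_{D_{m,n},{\bf 2}}$ is unimodular. Fix a construction of $\calc$ from $D_{m,n}$ by a sequence of the operations $\cone(\cdot)$, $G(\cdot)$, $\Lambda(\cdot)$ and apply these to the matrix one step at a time, recording for each new vertex its entry of ${\bf d}$. By (1) every nucleus vertex, and by (2) every Lawrence vertex, carries the entry $2$, whereas cone and ghost vertices carry arbitrary entries $\ge 2$; by Propositions \ref{coneghost} and \ref{lawrencenonbinary} each such step preserves unimodularity (a cone or ghost vertex may be added with any $d\ge 2$, a Lawrence vertex only with $d=2$), so $\mathcal{A}_{\calc,{\bf d}}$ is unimodular.

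For necessity, assume $\mathcal{A}_{\calc,{\bf d}}$ is unimodular and let $M,N$ be the vertex classes of the nucleus, so $\#M,\#N\ge 2$. The first task is to record the relevant commutation identities: deleting a ghost vertex commutes past $\cone(\cdot)$, $G(\cdot)$ and $\Lambda(\cdot)$; $\cone(\cdot)$ and $\Lambda(\cdot)$ commute; $\link_w(G(X))=G(\link_w(X))$ and $\link_w(\Lambda(X))=\Lambda(\link_w(X))$ whenever $w$ is not the newly created vertex (the $\cone$ version is Proposition \ref{commutes}); the link at a cone vertex of $\cone^a(Y)$ is $\cone^{a-1}(Y)$; and the link at a Lawrence vertex of $\Lambda^b(Y)$ is $\Lambda^{b-1}(Y)$. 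Granting these, for prescribed $v,v'\in M$, $w,w'\in N$, and a prescribed Lawrence vertex $\ell$ of $\calc$ (if one exists), the complex $D_{1,1}=C_4$ on $\{v,v',w,w'\}$, and the complex $\Lambda(D_{1,1})$ on $\{v,v',w,w',\ell\}$, occur as minors of $\calc$: delete every ghost vertex, take links at all cone vertices and at all but the chosen Lawrence vertex, and take links at the nucleus vertices other than $v,v',w,w'$ (using that $\link_w(D_{m,n})$ is $D_{m-1,n}$ or $D_{m,n-1}$). The set of vertices linked lies in a single facet of $\calc$, so this is a legitimate minor in the sense of Definition \ref{def:minor}.

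Now suppose (1) fails, say $d_v\ge 3$ with $v\in M$ (the case $v\in N$ being symmetric). By Propositions \ref{minornonbinary} and \ref{conformal}, the matrix $\mathcal{A}_{C_4,{\bf e}}$ having one coordinate equal to $3$ and the rest equal to $2$ would be unimodular; up to a cyclic relabeling of $C_4$ this is $\mathcal{A}_{D_{1,1},(2,2,2,3)}$, contradicting Proposition \ref{badnonbinary}(\ref{item:square}). So (1) holds. If instead (2) fails, some Lawrence vertex $\ell$ has $d_\ell\ge 3$; by (1) the four nucleus vertices of the minor $\Lambda(D_{1,1})$ carry the entry $2$, so Propositions \ref{minornonbinary} and \ref{conformal} force $\mathcal{A}_{\Lambda(D_{1,1}),(2,2,2,2,3)}$ to be unimodular, contradicting Proposition \ref{badnonbinary}(\ref{item:llsquare}). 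Hence (2) holds as well.

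The main obstacle is the bookkeeping in the necessity direction: one must carefully verify the commutation identities above and check that peeling off the cone, ghost, and Lawrence vertices is compatible with simultaneously contracting the nucleus $D_{m,n}$ down to $D_{1,1}$, so that $D_{1,1}$ (respectively $\Lambda(D_{1,1})$) genuinely arises as a minor with the vertex bearing the offending entry in the required position. Once that structural claim is in place, the rest follows immediately from Propositions \ref{coneghost}, \ref{lawrencenonbinary}, \ref{minornonbinary}, \ref{conformal} together with the list in Proposition \ref{badnonbinary}.
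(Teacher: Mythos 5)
Your proof is correct and takes essentially the same approach as the paper: sufficiency follows by applying Propositions \ref{coneghost} and \ref{lawrencenonbinary} step by step, and necessity by peeling off cone, ghost, and excess Lawrence vertices and contracting the nucleus so that either $D_{1,1}$ or $\Lambda(D_{1,1})$ arises as a minor carrying the offending entry of $\bfd$, then invoking Proposition \ref{badnonbinary}(\ref{item:square}) or (\ref{item:llsquare}) together with Propositions \ref{minornonbinary} and \ref{conformal}. You simply make the commutation and face-containment bookkeeping more explicit than the paper, which is compressed into the phrase ``by taking suitable links.''
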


\begin{proof}
The fact that all such $\mathcal{A}_{\calc, {\bf d}}$
are unimodular follows from applying Propositions
\ref{coneghost}  and \ref{lawrencenonbinary}.  We now show that
these are the only unimodular $\mathcal{A}_{\calc, {\bf d}}$
with this type of complex $\calc$.

Note that since taking cone vertices commutes with adding ghost vertices
and Lawrence liftings and this always preserves unimodularity,
we can assume there are no cone vertices.  
Now if $\calc$ is a nuclear complex obtained by successively adding
ghost vertices and Lawrence vertices to $D_{m,n}$, then the
induced subcomplex on just the $D_{m,n}$ and Lawrence vertices
is an iterated Lawrence lifting of $D_{m,n}$.  If any
of the vertices of $v$ that are Lawrence vertices have $d_{v} > 2$,
then by taking suitable links, one obtains the complex
isomorphic to item (4) in Proposition \ref{badnonbinary}.
On the other hand, if $d_{v} > 2$ for some $v $ in the original
$D_{m,n}$, then by taking suitable links, one obtains the complex
isomorphic to item (2) in  Proposition \ref{badnonbinary}.
In summary, if the conditions of Theorem \ref{thm:dmnnonbinary}
are not satisfied, there is a minor of $\calc$ that is not unimodular.
\end{proof}

It remains to consider the unimodularity of $\mathcal{A}_{\calc,\bfd}$
when $\calc$ is a nuclear complex with nucleus $\Delta_{m} \sqcup \Delta_{n}$.
We do not have a general characterization in this case.
By Propositions \ref{coneghost} and \ref{commutes}, we may assume that $\calc$ has no cone vertices.  By Proposition \ref{disjointnonbinary},
we may assume that $m, n = 0$.
Note that $\Lambda( \Delta_{0} \sqcup \Delta_{0}) = \partial\Delta_2$, and
our results above (or \cite{ohsugi-hibi2010}) show that this
is unimodular if and only if some $d_{v} = 2$.  Similarly, the iterated 
Lawrence liftings $\Lambda^{p}( \Delta_{0} \sqcup \Delta_{0}) = 
\partial\Delta_{p+1}$ is unimodular if and only if at most two 
of the $d_{v}$ are greater than $2$.

We begin moving into uncharted territory when we consider cases where ghost 
vertices are involved. 
The simplest case to consider is the complex $\Lambda(G( \Delta_{0} \sqcup \Delta_{0}))$ which is the complex on $\{1,2,3,4\}$ with facets $\{12,13,234\}$.
Here $1$ is the Lawrence vertex and $4$ is the ghost vertex.
By Propositions \ref{minornonbinary} and \ref{badnonbinary} (\ref{item:other}),
we must have $d_1 \le 3$.
If $d_1 = 2$ then Proposition \ref{lawrencenonbinary} implies that $\mathcal{A}_{\mathcal{C},{\bf d}}$ will be unimodular.
So we only need to consider the case where $d_1 = 3$.
Propositions \ref{conformal}, \ref{minornonbinary}, and \ref{badnonbinary} (\ref{item:triangle}) imply that
one of $d_2$ or $d_3$ is $2$, so assume $d_2 = 2$.
This leaves us with the following question, which would classify
unimodularity for $\calc = \Lambda(G( \Delta_{0} \sqcup \Delta_{0}))$.

\begin{ques}
	Let $\mathcal{C} = \Lambda(G(\Delta_0 \sqcup \Delta_0))$ be the complex on $\{1,2,3,4\}$ with facets $\{12,13,234\}$.
	Let ${\bf d} = (3,2,d_{3},d_{4})$.
	For what values of $d_{3},d_{4}$ is the matrix $\mathcal{A}_{\mathcal{C},{\bf d}}$ unimodular?
\end{ques}

\section*{Acknowledgments}
Daniel Irving Bernstein was partially supported by the US National Science Foundation (DMS 0954865).
Seth Sullivant was partially supported by the David and Lucille Packard 
Foundation and the US National Science Foundation (DMS 0954865).

\bibliography{unimodular}
\bibliographystyle{plain}

\end{document}